\title{Explicit Square Zero Obstruction Theory}
\author{Shaul Barkan}
\begin{document}

\maketitle
\tableofcontents
\section{Introduction}
Let $R \to S$ be a surjective homomorphism of commutative rings and let $M$ be an $S$-module.
Suppose we are interested in classifying $R$-modules $\tild{M}$ which lift $M$
in the sense that $S \otimes_R \widetilde{M} \simeq M$.
In general, even the existence question for such a lift can be quite challenging, owing to the fact that the multiplications on $S$ and on $J \coloneq \ker(R \to S)$ are often hard to disentangle.
Perhaps we are fortunate however, and the multiplication on $J$ is nilpotent so that we may break up $R \to S$ into a sequence $R=S_k \to \cdots \to S_2 \to S_1 \to S_0=S$ of
\textit{square zero extensions}, i.e.~such that $J_i \coloneqq \ker(S_{i} \to S_{i-1})$ satisfies $J_i \cdot J_i =0 \subseteq S_i$.
We may then approach the problem inductively, working on a single square zero extension at a time.
We thus assume without loss of generality that $R \to S$ 
is itself a square zero extension.
The special case where $M$ is flat goes back to Grothendieck \cite{grothendieck}, who proved the following:
\begin{enumerate}
    \item[($\star$)]
    There exists an obstruction 
    $\mfr{o}(M) \in \Ext^2_S(M,J\otimes_S M)$
    which vanishes if and only if there exists an $R$-module $\tild{M}$ lifting $M$ in the sense that
    $S \otimes_R \tild{M} \simeq M$.
    Equivalence classes of such lifts form a torsor under the abelian group $\Ext^1_S(M,J\otimes_S M)$.
\end{enumerate}

This result marked the dawn of algebraic deformation theory which has since been generalized and applied in a wide variety of different contexts. 
This is a vast subject with a long history, and hence we are unable to give a complete account. 
For related work see \cite{gerstenhaber1964,lichtenbaum1967cotangent,illusie2006complexe,donald1974,artamkin1989deformation,laudal1995non,ile2001obstructions,yau2005deformation,lurie2007derived,pstrkagowski2022abstract}.
In this paper we prove the analog of $(\star)$ in the general setting of stable (non-symmetric) monoidal \categories{}.
For the sake of concreteness we first explain our results in the setting of spectra $\Sp$.

Recall that \textit{\hl{split square zero extension}} of an $\bbE_1$-algebra spectrum $A \in \Alg:=\Alg_{\bbE_1}(\Sp)$ by an $(A,A)$-bimodule $M$, is an $A$-augmented $\bbE_1$-algebra $\hl{A \ltimes M} \in \Alg_{/A}$ whose underlying spectrum is $A \oplus M$ and whose multiplication is given upto homotopy by the following formula (see \cite[\S 7.4]{HA}):
\[ (a_1,m_1) \cdot (a_2,m_2) \simeq (a_1 \cdot a_2,a_1 \cdot m_2+m_1 \cdot a_2)\]
We write $\hl{\Der(A;M)} \coloneqq \Map_{\Alg_{/A}}(A,A \ltimes M)$ for the space of \hl{\textit{$\bbE_1$-derivations}} with coefficients in $M$.
For book keeping purposes we define a \textit{\hl{square zero datum}} to be a triple $\hl{(A,I,\eta)}$ consisting of an $\bbE_1$-algebra spectrum $A \in \Alg$, a bimodule $I \in \BMod{A}{A}$ and a $\Sigma I$-valued $\bbE_1$-derivation $\eta \colon A \to A \ltimes \Sigma I \in \Der(A;\Sigma I) $. 
A square zero datum $(A,I,\eta)$ has an \hl{\textit{associated square zero extension}} defined by the following pullback square in $\Alg_{/A}$
\[\begin{tikzcd}
	{\hl{A^\eta}} & A \\
	A & {A \ltimes \Sigma I}
	\arrow["\eta", from=2-1, to=2-2]
	\arrow[from=1-1, to=2-1]
	\arrow[from=1-1, to=1-2]
	\arrow["{\eta_0}", from=1-2, to=2-2]
	\arrow["\lrcorner"{anchor=center, pos=0.125}, draw=none, from=1-1, to=2-2]
\end{tikzcd}\]
where $\hl{\eta_0} \colon A \to A \ltimes \Sigma I$ denotes the \textit{\hl{trivial derivation}}, given by the inclusion of the first factor under the equivalence 
$A \ltimes \Sigma I \simeq A \oplus \Sigma I$.
Our goal in this paper is to describe  $\LMod_{A^\eta}$ in terms of the square zero datum $(A,I,\eta)$.
The application to obstruction theory will follow directly from this alternative description of $\LMod_{A^\eta}$.
We shall now explain our results in detail.

\begin{defn}
    Given $M \in \BMod{A}{A}$ we write $
    \hl{\theta^{M}} \colon A \to \Sigma M \in \BMod{A \ltimes M}{A \ltimes M}$ 
    for the edge map in the canonical cofiber sequence of $\left(A \ltimes M,A \ltimes M\right)$-bimodules    
    \[M\too A \ltimes M \too A \quad \in\BMod{A \ltimes M}{A \ltimes M}.\]
\end{defn}

\begin{war}\label{war:null}
    The map $\theta^M \colon A \to \Sigma M$ is quite deceptive.
    The $(A,A)$-bimodule splitting $A \ltimes M \simeq A \oplus M$
    means that once we forget the action and consider the resulting map in $\BMod{A}{A}$ it becomes canonically null homotopic.
    In particular $\theta^M$ induces the zero map on underlying spectra.
    Informally speaking, the information in
    $\theta^M$ 
    is entirely hidden in the discrepancy between the left and right actions.
\end{war}

\begin{notation}
    Recall that bimodules are functorial in pairs of algebras (see \cref{notation:bimodule-precise} for details), i.e. we have a functor:
    \[\BMod{(-)}{(-)} \colon \Alg^\op \times \Alg^\op \to \Cat_\infty, \qquad (R,S) \longmapsto \BMod{R}{S}.\]
    In particular for any pair of $\bbE_1$-algebra morphisms 
    $f \colon R_1 \to R_2$ and $g \colon S_1 \to S_2$
    we have a restriction of scalars functor:
    \[(f,g)^\ast \colon \BMod{R_2}{S_2} \too \BMod{R_1}{S_1}\]
    Informally, $(f,g)^\ast$ takes as input an $(R_2,S_2)$-bimodule $M$ and outputs the $(R_1,S_1)$-bimodule whose underlying object is $M$ but whose bimodule structure is obtained by restricting the action on the left along $f$ and on the right along $g$.
\end{notation}

\begin{defn}
    Let $(A,I,\eta)$, be a square zero datum.
    We define the \textit{\hl{(left) obstruction map}}
    of $(A,I,\eta)$ as
    \[\hl{\theta_\eta} \coloneqq  (\eta_0,\eta)^\ast(\theta^{\Sigma I}) \colon A \too \Sigma^2 I \quad \in
    \BMod{A}{A}.
    \]
    For a left $A$-module $X \in \LMod_A$ we define
    \[\hl{\theta_\eta(X)}\coloneqq \theta_\eta \otimes_A X \colon X \to \Sigma^2 I \otimes_A X \quad \in 
    \LMod_A.
    \]
    Since $\LMod_A$ is stable the mapping space $\Map_{\LMod_A}(X,\Sigma^2 I\otimes_A X)$ is pointed by the zero map. 
    We let $\hl{\Null(\theta_\eta(X))}$ denote the space paths
    $0 \simeq \theta_\eta(X) $ in
    $\Map_{\LMod_A}(X,\Sigma^2 I\otimes_A X)$.
    Note that $\Null(\theta_\eta(X))$ is naturally a torsor under the grouplike $\bbE_\infty$-space $\Omega_0  \Map_{\LMod_A}(X,\Sigma^2 I\otimes_A X) \simeq \Map_{\LMod_A}(X,\Sigma I\otimes_A X)$.
\end{defn}

We are ready to state the promised application to obstruction theory in the setting of connective ring spectra.

\begin{corA}[\cref{cor:obstruction-theory-intro} for $\calA^{\ge 0}=\calM^{\ge 0}=\Sp^{\ge 0}$]\label{corA:obstruction-intro-connective-spectra}
    Let $(A,I,\eta)$ be a square zero datum such that $A$ and $I$ are connective and let $X$ be a connective left $A$-module. 
    Then there is a canonical pullback square of \categories{}:
    \[\begin{tikzcd}
	{\Null(\theta_\eta(X))} & {\LMod_{A^\eta}} \\
	{\{X\}} & {\LMod_{A}.}
	\arrow[from=2-1, to=2-2]
	\arrow[from=1-1, to=2-1]
	\arrow[from=1-1, to=1-2]
	\arrow["{A \otimes_{A^\eta} (-)}", from=1-2, to=2-2]
	\arrow["\lrcorner"{anchor=center, pos=0.125}, draw=none, from=1-1, to=2-2]
    \end{tikzcd}\]
    In particular, we obtain the following consequences which mirror $(\star)$:
    \begin{enumerate}
    \item 
    The class
    $[\theta_\eta(X)] \in \pi_0 \Map_A(X,\Sigma^2 I\otimes_A X) \simeq \Ext^2_A(X,I\otimes_A X)$
    vanishes if and only if $X$ lifts to an $A^\eta$-module $\tild{X}\in \LMod_{A^\eta}$ in the sense that
    $A \otimes_{A^\eta} \tild{X} \simeq X$.
    \item 
    Equivalence classes of lifts as in $(1)$ form a torsor under
    $\pi_0\Map_A(X,\Sigma I\otimes_A X) \simeq \Ext^1_A(X,I\otimes_A X)$.
    \end{enumerate}
\end{corA}

Extending the theorem above to the case where $A$, $I$ and $X$ are not necessarily connective introduces
certain subtle features that do not
appear in the connective case.
Still, the connectivity assumptions in \cref{corA:obstruction-intro-connective-spectra}
can be removed at the cost of throwing away certain "bad" connected components of
$\Null(\theta_\eta(X))$.
We shall now describe the non-connective case in detail.
Recall that an $(A,A)$-bimodule $V \in \BMod{A}{A}$ defines an endofunctor of $\LMod_A$ via:
\[ V \otimes_A(-) \colon \LMod_A \too \LMod_A, \qquad X \longmapsto V \otimes_A X. \]
Accordingly, a morphism of $(A,A)$-bimodules $\alpha \colon W \to V$ gives rise to a functor:
\[ \alpha \otimes_A (-) \colon \LMod_A \to \Ar(\LMod_A), \qquad X \longmapsto \left(\alpha \otimes_A X \colon  W \otimes_A X \to V \otimes_A X\right). \]

\begin{defn}\label{defn:null-category-sqz}
    Let $(A,I,\eta)$ be a square zero datum.
    We define the \category{}
    $\hl{\Null_{\theta_\eta}(\LMod_A)}$ as the pullback:
    \[\begin{tikzcd}
    	{\Null_{\theta_\eta}(\LMod_A)} && {\LMod_A^{\times 2}} \\
    	{\LMod_A} && {\Ar(\LMod_A).}
    	\arrow[""{name=0, anchor=center, inner sep=0}, "{\theta_\eta \otimes_A(-)}"', from=2-1, to=2-3]
    	\arrow[from=1-1, to=2-1]
    	\arrow[from=1-1, to=1-3]
    	\arrow["{(X,Y) \mapsto \left(0\colon X \to Y\right)}", from=1-3, to=2-3]
    	\arrow["\lrcorner"{anchor=center, pos=0.125}, draw=none, from=1-1, to=0]
    \end{tikzcd}\]
    Informally, this is the \category{} of pairs $(X,h)$ where $X \in \LMod_A$ and $h \in \Null(\theta_\eta(X))$.
\end{defn}

\begin{defn}
    By \cref{war:null} the map
    $\theta_\eta \otimes_A A \colon A \to \Sigma^2 I \in \LMod_A$ 
    has a canonical null homotopy which we denote by $\hl{\gamma_A} \colon 0 \simeq \theta_\eta \otimes_A A$.
    The pair $(A,\gamma_A)$ defines an object of $\Null_{\theta_\eta}(\LMod_A)$.
\end{defn}

\begin{defn}
    Let $\Und \colon \LMod_A \to \Sp$ denote the forgetful functor.
    The map 
    \[
    \Und(\theta_\eta(X)) \colon \Und(X) \to \Und(\Sigma^2 I \otimes_A X) \quad \in \,
    \Sp.
    \]
    is null homotopic.   
    Indeed, a canonical such null homotopy is provided by the following diagram:
    \[\begin{tikzcd}
	& {\Und(A) \otimes \Und(X)} &&& {\Und(\Sigma^2 I) \otimes \Und(X)} \\
	{\Und(X)} &&&&& {\Und(\Sigma^2 I \otimes_A X).}
	\arrow[""{name=0, anchor=center, inner sep=0}, "{\Und(\theta_\eta) \otimes \Und(X)}"', curve={height=12pt}, from=1-2, to=1-5]
	\arrow[from=1-5, to=2-6]
	\arrow["{\Und(\theta_\eta(X))}"', from=2-1, to=2-6]
	\arrow[from=2-1, to=1-2]
	\arrow[""{name=1, anchor=center, inner sep=0}, "0", curve={height=-18pt}, from=1-2, to=1-5]
	\arrow["{\Und(\gamma_A)\otimes \Und(X)}"{description, pos=0.45}, shorten <=4pt, shorten >=4pt, Rightarrow, from=1, to=0]
    \end{tikzcd}\]
    We write
    $\hl{\gamma_A(X)}: 0 \simeq \Und(\theta_\eta(X))$ for the corresponding path in $\Map_{\Sp}(\Und(X),\Und(\Sigma^2 I \otimes_A X))$.
\end{defn}

Recall that for a fixed $X \in \LMod_A$ the space
$\Null(\theta_\eta(X))$ 
is naturally a torsor under $\Map_{\LMod_A}(X,\Sigma I\otimes_A X)$.
In particular, given $h_1,h_2 \in \Null(\theta_\eta(X))$ 
we may define
$\hl{h_2-h_1}\coloneqq  h_1^{-1}\circ h_2 \in \Map_{\LMod_A}(X,\Sigma I\otimes_A X)$.
This notation is consistent with the torsor action in the sense that 
$h_1 + (h_2-h_1) \simeq h_2$.

\begin{defn}
    Given $(X,h) \in \Null_{\theta_\eta}(\LMod_A)$ 
    we define
    \[\hl{\beta_h} \coloneqq  \gamma_A(X) - \Und(h) \colon \Und(X) \too \Und(\Sigma I \otimes_A X) \quad \in \, \Sp. 
    \]
    \begin{enumerate}
    \item $(X,h)$ is called \hl{\textit{$\beta$-divisible}} if $\beta_h : \Und(X) \to \Und(\Sigma I\otimes_A X)$ is an equivalence.
    \item
    $(X,h)$ is called \hl{\textit{$\beta$-torsion}} if it admits no non-trivial maps into $\beta$-divisible objects, i.e. for any morphism $\psi \colon (X,h) \to (Y,g)$ in $\Null_{\theta_\eta}(\LMod_A)$ whose target $(Y,g)$ is $\beta$-divisible we have $\psi \simeq 0$.
    \end{enumerate}
\end{defn}

\begin{thmA}[\cref{thm:sqz-general-case} for $\calA=\calM=\Sp$]\label{thm:spectra-sqz-modules}
    Let $(A,I,\eta)$ be a square zero datum.
    There exists a canonically commuting square of \categories{}
    \[\begin{tikzcd}
	{\LMod_{A^\eta}} &&& {\LMod^{\times 2}_A} \\
	{\LMod_A} &&& {\Ar(\LMod_A),}
	\arrow["{\theta_\eta \otimes_A(-)}"', from=2-1, to=2-4]
	\arrow["{A\otimes_{A^\eta}(-)}"', from=1-1, to=2-1]
	\arrow["{(A\otimes_{A^\eta}(-),\Sigma^2 I \otimes_{A^\eta}(-))}", from=1-1, to=1-4]
	\arrow["{(X,Y) \mapsto \left(0\colon X \to Y\right)}", from=1-4, to=2-4]
    \end{tikzcd}\]
    which gives rise to a fully faithful embedding
    \[ 
    \LMod_{A^\eta} \hookrightarrow \Null_{\theta_\eta}(\LMod_A) =
    \LMod_A \times_{\Ar(\LMod_A)} \LMod^{\times 2}_A,
    \]
    whose essential image is precisely the $\beta$-torsion objects.
\end{thmA}

\cref{thm:spectra-sqz-modules}
is an instance of the more general \cref{thm:sqz-general-case}, whose formulation we now sketch.
Let $\PrLst$ denote the \category{} whose objects are stable presentable \categories{} and whose morphisms are left adjoint functors.
Fix a stable presentably monoidal \category{} $\calA \in \Alg(\xPrL{}{\st})$ and an $\bbE_1$-algebra 
$A \in \Alg(\calA)$.
By \cite[Theorem 7.3.4.13]{HA} we have a canonical equivalence 
$\BMod{A}{A}(\calA) \simeq \Sp(\Alg(\calA)_{/A})$.
Using this equivalence we can define the \textit{\hl{split square zero extension}} functor in this general setting as the composite:
\[\hl{A \ltimes(-)} \colon \BMod{A}{A}(\calA) \simeq \Sp(\Alg(\calA)_{/A}) \xrightarrow{\,\,\Omega^\infty \,\,} \Alg(\calA)_{/A}.\]
Just as before we define a square zero datum in $\calA$ to be a triple $(A,I,\eta)$ consisting of an $\bbE_1$-algebra $A \in \Alg(\calA)$, a bimodule $I \in \BMod{A}{A}(\calA)$ and an $\bbE_1$-derivation $\eta \in \Der(A;\Sigma I)\coloneqq  \Map_{\Alg(\calA)_{/A}}(A,A \ltimes \Sigma I)$.
Similarly, we define $A^\eta$ to be the pullback of 
$\left(A \xrightarrow{\eta} A \ltimes \Sigma I \xleftarrow{\eta_0}A\right)$ taken in $\Alg(\calA)_{/A}$.
Finally, recall that given a presentable left $\calA$-module $\mcal{M} \in \LMod_\calA(\xPrL{}{\st})$ and an $\bbE_1$-algebra $R \in \Alg(\calA)$ we may consider the \category{} $\LMod_R(\mcal{M})$ of left $R$-modules in $\mcal{M}$ \cite[Definition 4.2.1.13.]{HA}.

\cref{thm:sqz-general-case}
is a generalization of \cref{thm:spectra-sqz-modules} in which $\Sp$ is replaced by $\calA$ and $\LMod_{A^\eta}$ is replaced by $\LMod_{A^\eta}(\calM)$.
Such a formulation is essentially what we prove in
\cref{thm:sqz-general-case}.
To recover \cref{thm:spectra-sqz-modules} one simply substitutes $\calA=\calM=\Sp$.
We also prove a variant of \cref{corA:obstruction-intro-connective-spectra} in a similarly general setting.
For this we need an appropriate notion of connectivity in the stable \categories{} $\calA$ and $\calM$.
Such a notion is provided by pair of compatible $t$-structures on $\calA$ and $\calM$.
The data of a stable \category{} with a (right complete) $t$-structure is equivalent to a prestable \category{}.
For convenience sake, we formulate the result in terms of prestable \categories{} rather than $t$-structures.
We briefly recall the relevant terminology.

Recall that an \category{} is called \hl{\textit{prestable}} if it arises as the connective part of a $t$-structure on some stable \category{}.
To solidify this intuition we denote prestable \categories{} with a "${\ge\! 0}$" subscript, e.g. $\calC^{\ge 0}$.
Given a prestable \category{} $\calC^{\ge 0}$ the functor $\Sigma^\infty \colon \calC^{\ge 0} \too \Sp(\calC^{\ge 0})$ is fully faithful and its essential image is closed under extension.\footnote{in fact, this property characterizes prestable \categories{}.}
We then write $\calC \coloneqq  \Sp(\calC^{\ge 0})$ for the stabilization and $\calC^\heart \subseteq \calC^{\ge 0}$ for the subcategory of discrete, i.e. $0$-truncated, objects.
This notation makes manifest that $\calC^{\ge 0}$ is the connective part of a (unique) $t$-structure on $\calC$ with heart $\calC^\heart$.
A prestable \category{} $\calC^{\ge 0}$ is called \textit{\hl{separated}} if it has no non-zero $\infty$-connected objects.
Equivalently,$\calC^{\ge 0}$ is separated if and only if the homotopy groups functor $\pi_\ast^\heart \colon \calC \to \gr(\calC^\heart)$ is conservative. 
We are finally ready to state the connective variant of \cref{thm:sqz-general-case}.

\begin{thmA}[\cref{thm:connective-main-theorem}]\label{thm:sqz-connective-variant}
    Let $\calA^{\ge 0}$ be a presentably monoidal prestable \category{}, let $(A,I,\eta)$ be a square zero datum in $\calA^{\ge 0}$ and let $\mcal{M}^{\ge 0}$ be a seprated, prestable presentable left $\calA^{\ge 0}$-module.
    Then there is a canonical equivalence:
    \[ \LMod_{A^\eta}(\mcal{M}^{\ge 0})\iso \Null_{\theta_\eta}\left(\LMod_A(\mcal{M}^{\ge 0})\right)\]
\end{thmA}

\cref{thm:sqz-connective-variant} gives rise to the promised obstruction theory for modules along square zero extensions.

\begin{corA}\label{cor:obstruction-theory-intro}
    In the context of \cref{thm:sqz-connective-variant}, any bounded below left $A$-module $X \in \LMod_A(\mcal{M}^{>-\infty})$ participates in a pullback square:
	\[\begin{tikzcd}
	{\fib_0\left(\pt \xrightarrow{\theta_\eta(X)} \Map_{\LMod_A(\mcal{M})}(X,\Sigma^2 I \otimes_A X) \right)} && {\LMod_{A^\eta}(\calM).} \\
	\\
	{\{X\}} && {\LMod_A(\calM).}
	\arrow[from=1-3, to=3-3]
	\arrow[""{name=0, anchor=center, inner sep=0}, from=3-1, to=3-3]
	\arrow[from=1-1, to=3-1]
	\arrow[from=1-1, to=1-3]
	\arrow["\lrcorner"{anchor=center, pos=0.125}, draw=none, from=1-1, to=0]
    \end{tikzcd}\]
\end{corA}

Note that \cref{corA:obstruction-intro-connective-spectra} can be recovered from \cref{cor:obstruction-theory-intro} by setting $\calA = \calM =\Sp$. 

\subsubsection{Outline of the paper}


In \textit{\cref{section:2}}, we focus entirely on split square zero extensions.
After establishing some formal prerequisites, we study the relation between split square zero extensions and colax fixed points.
The technical backbone of this section is \cref{thm:fixcolax-categorifies-exterior} which, figuratively speaking, says that colax fixed points categorify split square zero extensions.
We then deduce \cref{cor:bifiber-seq-general-case} which is essentially a reformulation of \cref{thm:sqz-general-case} in the split case.

In \textit{\cref{section:3}}, we treat general case of non-split extensions.
We begin with a systematic analysis of \categories{} of the form 
$\Null_\alpha(\calD)$ 
where $\calD$ is a presentable stable \category{} and $\alpha\colon \Id \to E$ is some natural transformation in $\Fun_{\PrLst}(\calD,\calD)$.
We then specialize to the case of interest, namely $\calD \coloneqq \LMod_A(\calM), E\coloneq \Sigma^2 I\otimes_A(-)$ and $\alpha \coloneqq \theta_\eta \colon A \to \Sigma^2 I$.
Using the results of \cref{section:2}, we prove \cref{thm:sqz-general-case}.
and then deduce the prestable variant, i.e. \cref{thm:sqz-connective-variant}, using straightforward connectivity arguments.

\subsubsection{Acknowledgements}
I would like to thank my advisor, Tomer Schlank, for his invaluable guidance and support.
I would like to thank Tim Campion, Rune Haugseng, Maxime Ramzi, Jan Steinebrunner and Lior Yanovski for useful discussions regarding aspects of this work.
I would like to thank Janina C.~Letz and Elizabeth Tatum for their feedback on an early draft and Martin Gallauer for spotting some subtleties surrounding the Gray tensor product.
I would like to thank the Hausdorff Research Institute for Mathematics for their hospitality during the fall trimester program of 2022, funded by the Deutsche Forschungsgemeinschaft (DFG, German Research Foundation) under Germany's Excellence Strategy – EXC-2047/1 – 390685813.

\section{The split case}\label{section:2}

In this section we study the interplay between split square zero extensions and colax fixed points, which we shall now briefly recall.
Let $\calC$ be an \category{} and $T \colon \calC \to \calC$ an endofunctor.
The \textit{colax fixed points} of $T$ acting on $\calC$ is an \category{} $\Fixcolax_T(\calC)$ whose objects are pairs $(X,\varphi)$ where $X \in \calC$ and $\varphi \in \Map_\calC(X,T(X))$ and whose morphisms $(X,\varphi) \to (Y,\psi)$ are commutative squares:
\[\begin{tikzcd}
	X & Y \\
	{T(X)} & {T(Y)}
	\arrow["f", from=1-1, to=1-2]
	\arrow["\varphi"', from=1-1, to=2-1]
	\arrow["\psi", from=1-2, to=2-2]
	\arrow["{T(f)}"', from=2-1, to=2-2]
\end{tikzcd}\]
The main goal of this section is to prove \cref{cor:bifiber-seq-general-case} which, when specialized to the case
$\calA=\mcal{N}=\Sp$,
states that for any $V \in \Sp$ there is a canonical coreflective adjunction:
\[\begin{tikzcd}
	{\tild{i}_!\colon \LMod_{\Lambda (\Sigma^{-1} V)}(\Sp)} && { \Fixcolax_{V \otimes (-)}\left(\Sp\right) \colon \tild{i}^\ast}
	\arrow[""{name=0, anchor=center, inner sep=0}, shift left=2, hook, from=1-1, to=1-3]
	\arrow[""{name=1, anchor=center, inner sep=0}, shift left=2, from=1-3, to=1-1]
	\arrow["\dashv"{anchor=center, rotate=-90}, draw=none, from=0, to=1]
\end{tikzcd}\]
where $\Lambda(\Sigma^{-1}V) \coloneqq \bbS \ltimes (\Sigma^{-1}V)$.
We will deduce this from \cref{thm:fixcolax-categorifies-exterior} using an enriched variant of Barr-Beck-Lurie (\cref{lem:recollement-from-adjunction}).
\cref{thm:fixcolax-categorifies-exterior} and its proof constitute the technical backbone of this paper.
When spelled out for $\calA=\calN=\Sp$, \cref{thm:fixcolax-categorifies-exterior} says, informally speaking, that $\LMod_{\Lambda (\Sigma^{-1} V)}$ is the best approximation of $\Fixcolax_{V \otimes(-)}(\Sp)$ by an \category{} of modules over an algebra.

\begin{rem}
    Let us offer a precise definition of $\Fixcolax_T(\calC)$.
    Note that the pair $(\calC,T)$ determines (and is determined by) a unique functor $F_{(\calC,T)} \colon \rmB \bbN \too \CatI$, the unstraightning of which constitutes a cocartesian fibration 
    $\Un_{\rmB \bbN}(F_{(\calC,T)}) \to \rmB \bbN$. 
    One can define 
    $\Fixcolax_T(\calC)$ as the \category{} of sections:
    \[\Fixcolax_T(\calC)\coloneqq \Fun_{/\rmB \bbN}\left(\rmB \bbN,\Un_{\rmB \bbN}(F_{(\calC,T)})\right)\]
\end{rem}

As its name suggests 
$\Fixcolax_T(\calC)$ can be thought of as the colax limit of the functor 
$\rmB \bbN \to \CatI$ classified by $(\calC,T)$.
The $2$-categorical nature of $\Fixcolax_T(\calC)$ does not appear in the formulation of \cref{cor:bifiber-seq-general-case}, 
but will be essential for its proof.
In the first two subsections we lay some foundational context for the paper.
As such these subsections are highly technical and are best skipped at first reading.

\subsection{\texorpdfstring{$(\infty,2)$-}{Infinity 2-}categorical preliminaries}

The \category{} of \twocategories{} admits many equivalent descriptions but there seems to be no concensus on a single-valued definition.
For the purpose of this paper we define the term \textit{\hl{\twocategory{}}} to mean $\CatI$-enriched \category{}, 
where enrichment is taken in the sense of \cite{enriched}.  
We write $\hl{\twoCat}$ for the \category{} of \twocategories{}.
This model for $\twoCat$ was shown in \cite{RuneTwoCat} to be equivalent to complete $2$-fold Segal spaces \cite{BarwickThesis}. 
The latter was shown in  \cite{unicity} to be equivalent to complete Segal $\Theta_2$-spaces \cite{Rezk}. 
For a more detailed account on various models we refer the reader to \cite{RuneMonad}.

Restricting enrichment along the monoidal adjunction
$|-| \colon  \CatI \adj \calS$
gives rise to an adjunction:
\[\begin{tikzcd}
	{\mrm{include} \colon \CatI} && {\twoCat \colon \iota}
	\arrow[""{name=0, anchor=center, inner sep=0}, shift left=2, hook, from=1-1, to=1-3]
	\arrow[""{name=1, anchor=center, inner sep=0}, shift left=2, from=1-3, to=1-1]
	\arrow["\dashv"{anchor=center, rotate=-90}, draw=none, from=0, to=1]
\end{tikzcd}\]
The right adjoint sends an \twocategory{} $\frX \in \twoCat$ to its underlying \category{} $\iota \frX \in \CatI$.
For $\epsilon \in \{0,1\}$ we let $C_\epsilon \in \Cat_{(\infty,2)}$ denote $[\epsilon] \in \CatI$ considered as an $(\infty,2)$-category and $\partial C_1 \coloneqq  C_0 \amalg C_0 \in \twoCat$.
We write $\enHom_\frX(-,-) \colon (\iota \frX)^\op \times \iota \frX \to \Cat_\infty$
for the $\CatI$-enriched hom functor.
The cartesian product in $\twoCat$ preserves colimits in each variable, and thus we have an associated enriched hom:
\[\FUN(-,-) \colon \twoCat^\op \times \twoCat \to \twoCat\]
We write $\hl{\laxotimes} \colon \Cat_{(\infty,2)} \times \Cat_{(\infty,2)} \too \Cat_{(\infty,2)}$ 
for the \textit{\hl{Gray tensor product}} - a monoidal structure introduced initially for $2$-categories in \cite{formal}, and extended to \twocategories{} in \cite{maeharalax}.
We refer the reader to \cite[\S 3]{RuneMonad} for precise definitions as well as a brief summary of its basic properties.\footnote{We do not assume anything about the Gray tensor product beyond what is currently known. Indeed, $(1)$ and $(2)$ in \cite[Assumptions 3.2]{RuneMonad} are now theorems due to Maehara \cite{maeharalax} and $(3)$ will be entirely irrelevant for the purposes of this paper.}
The Gray tensor product defines a colimit preserving symmetric monoidal structure on $\twoCat$ with the property that the tensor product $C_1 \laxotimes C_1 \in \twoCat$ is given by the lax commuting square:
    \[\begin{tikzcd}
	\bullet & \bullet \\
	\bullet & \bullet
	\arrow[from=1-1, to=2-1]
	\arrow[from=1-1, to=1-2]
	\arrow[from=1-2, to=2-2]
	\arrow[from=2-1, to=2-2]
	\arrow[Rightarrow, from=1-2, to=2-1]
    \end{tikzcd}\]
We denote the internal hom adjoint to the Gray monoidal structure as:
\[\FUN(-,-)_\lax \colon \twoCat^\op \times \twoCat \too \twoCat \]
The Gray monoidal structure is \textit{not} symmetric. We denote its reverse by 
$\frX \hl{\colaxotimes} \frY \coloneqq  \frY \laxotimes \frX$ 
and the corresponding internal hom by:
\[\FUN(-,-)_\colax \colon \twoCat^\op \times \twoCat \too \twoCat \]

Let $\frX$ be an \twocategory{}. 
Given $X \in \frX$ and $J \in \CatI$, the \textit{\hl{cotensor}} of $X$ by $J$, when it exists, is an object $\hl{X^J} \in \frX$ representing the following presheaf:
\[ \Map_{\CatI}(J,\enHom_\frX(-,X)) \colon (\iota \frX)^\op \too \calS \qquad Y \longmapsto \Map_{\CatI}(J,\enHom_\frX(Y,X))\]

\begin{defn}
    We say that an \twocategory{} $\frX \in \twoCat$ is \textit{\hl{finitely bi-complete}} if it admits all finite cotensors and its underlying \category{} is finitely complete.
\end{defn}

\begin{notation}
    \begin{enumerate}
        \item We let $\frend \in \Cat_{(\infty,2)}$ denote the free walking endomorphism.
        \item We let $\frmnd \in \Cat_{(\infty,2)}$ denote the free walking monad. 
        \item We let $\fradj \in \Cat_{(\infty,2)}$ denote the free walking adjunction.
    \end{enumerate}
\end{notation}
\begin{defn}
    Let $\frX$ be an $(\infty,2)$-category.
    \begin{enumerate}
        \item 
        We define the \hl{\textit{fixed points}} functor, when it exists, to be the right adjoint of the adjunction:
        \[\mrm{const} \colon \frX \adj \FUN(\frend,\frX) \colon \hl{\FIX}\]
        \item 
        We define the \hl{\textit{colax fixed points}} functor, when it exists, to be the right adjoint in the adjunction:
	    \[ \mrm{const}:\frX \adj \FUN(\mfr{end},\frX)_\colax: \hl{\FIXcolax}\]
    \end{enumerate}
\end{defn}
Note that the natural inclusion $\FUN(\frend,\frX) \hookrightarrow \FUN(\frend,\frX)_\colax$ induces an equivalence on underlying \categories{}.
In particular, in both cases, objects correspond to pairs $(X,T)$ where $X \in \frX$ and $T \colon X \to X$ is an endomorphism.
The following lemma describe the morphisms between such pairs.

\begin{lem}\label{lem:mapping-spaces-in-end-lax}
    Suppose $(X,T)$ and $(X',T')$ are objects of  $\FUN(\frend,\frX)_\colax$.
    Then there is a canonical pullback square:
    \[\begin{tikzcd}
		{\Map_{\FUN(\mfr{end},\frX)_\colax}\left((X',T'),(X,T)\right)} & {\Ar\left(\enHom_\frX(X',X)\right)^{\simeq}} \\
		{\enHom_\frX(X',X)^{\simeq}} & {\enHom_\frX(X',X)^{\simeq} \times \enHom_\frX(X',X)^{\simeq}}
		\arrow[from=1-1, to=2-1]
		\arrow[from=1-2, to=2-2]
		\arrow["{\left((-)\circ T',T \circ(-)\right)}", ""{name=0, anchor=center, inner sep=0}, from=2-1, to=2-2]
		\arrow[from=1-1, to=1-2]
		\arrow["\lrcorner"{anchor=center, pos=0.125}, draw=none, from=1-1, to=0]
	\end{tikzcd}\]
\end{lem}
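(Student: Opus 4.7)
The plan is to translate the mapping space via the Gray tensor adjunction, identify the relevant 2-category of "colax squares'', and read off the pullback structure.

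I would begin by applying the defining adjunction of the colax internal hom,
\[\Map_\twoCat(\frZ, \FUN(\frend, \frX)_\colax) \simeq \Map_\twoCat(\frZ \colaxotimes \frend, \frX).\]
Specializing $\frZ = C_1$ and taking the fiber over the prescribed pair of endpoints via the restriction induced by $\partial C_1 \hookrightarrow C_1$, one obtains a canonical identification
\[\Map_{\FUN(\frend, \frX)_\colax}\bigl((X',T'),(X,T)\bigr) \simeq \Map_\twoCat(C_1 \colaxotimes \frend, \frX) \times_{\Map_\twoCat(\partial C_1 \colaxotimes \frend, \frX)} \{((X',T'),(X,T))\},\]
where the fiber is taken along the pair of restriction maps.

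Next I would identify $C_1 \colaxotimes \frend = \frend \laxotimes C_1$ as the 2-category freely generated by two objects $0, 1$, a 1-cell $t_i\colon i \to i$ at each object, a 1-cell $f\colon 0 \to 1$, and a single generating 2-cell $\beta$ between the two composites $t_1 \circ f$ and $f \circ t_0$, in the direction dictated by the Gray convention (as in the pictorial formula for $C_1 \laxotimes C_1$ quoted in the excerpt). This description can be deduced by using that $\laxotimes$ preserves colimits separately in each variable together with the pushout presentation $\frend \simeq C_0 \sqcup_{\partial C_1} C_1$, reducing the description of $\frend \laxotimes C_1$ to the already-known formula for $C_1 \laxotimes C_1$ and the triviality of the unit $C_0$.

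Given this presentation, a 2-functor $C_1 \colaxotimes \frend \to \frX$ restricting to $(X',T')$ and $(X,T)$ on the two components of $\partial C_1 \colaxotimes \frend$ amounts to a pair $(f, \beta)$ with $f\colon X' \to X$ a 1-cell of $\frX$ and $\beta$ a 2-cell in $\frX$ filling the relevant square; equivalently, an arrow in the hom-category $\enHom_\frX(X', X)$ whose source and target are $f \circ T'$ and $T \circ f$ respectively. The space of pairs $(f, \beta)$ is precisely the pullback
\[\Ar\bigl(\enHom_\frX(X', X)\bigr)^\simeq \times_{\enHom_\frX(X', X)^\simeq \times \enHom_\frX(X', X)^\simeq} \enHom_\frX(X', X)^\simeq\]
along the source-target map $(s,t)$ on the $\Ar$-factor and the map $f \mapsto (f \circ T', T \circ f)$ on the other factor, exactly as claimed.

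The main technical obstacle is the careful identification of $C_1 \colaxotimes \frend$ with the claimed cellular presentation, in particular pinning down the correct direction of the generating 2-cell; this requires unpacking Gray-tensor conventions, but once settled the remainder of the argument is a direct translation via the adjunction.
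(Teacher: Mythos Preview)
Your proposal is correct and follows essentially the same strategy as the paper: both use the Gray adjunction together with the pushout presentation $\frend \simeq C_0 \amalg_{\partial C_1} C_1$ to reduce to the known description of $C_1 \laxotimes C_1$. The only difference is packaging: the paper applies $\FUN(-,\frX)_\colax$ to that pushout to obtain a pullback square of $(\infty,2)$-categories, then hits the result with $\Map_{\twoCat}(\partial C_1 \hookrightarrow C_1,-)$ to produce a cube whose top and bottom faces are cartesian, and finally takes the fiber over the chosen pair of objects; you instead compute the Gray tensor $C_1 \colaxotimes \frend$ explicitly and read off the mapping space directly. These are dual formulations of the same argument, and your explicit cellular description of $\frend \laxotimes C_1$ is exactly what the paper's cube encodes implicitly.
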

\begin{proof}
    Applying $\FUN(-,\frX)_\colax$ to the following pushout square
    \[\begin{tikzcd}
	{\partial C_1} & { C_1} \\
	{C_0} & {\mfr{end}}
	\arrow[from=1-2, to=2-2]
	\arrow[from=1-1, to=2-1]
	\arrow[from=2-1, to=2-2]
	\arrow[from=1-1, to=1-2]
	\arrow["\lrcorner"{anchor=center, pos=0.125, rotate=180}, draw=none, from=2-2, to=1-1]
    \end{tikzcd}\]
    and then hitting the result with $\Map_{\twoCat}(\partial C_1 \hookrightarrow C_1,-)$ gives a commutative cube
    \[\begin{tikzcd}[sep=0.8em, font=\footnotesize]
	& {\Map_{\twoCat} \left(C_1,\FUN(\frend,\frX)_\colax\right)} && {\Map_{\twoCat}\left(C_1,\FUN(C_1,\frX)_\colax\right)} \\
	{\Map_{\twoCat}\left(C_1,\frX\right)} && {\Map_{\twoCat}\left(C_1,\FUN(\partial C_1,\frX)_\colax\right)} \\
	& {\Map_{\twoCat} \left( \partial C_1,\FUN(\frend,\frX)_\colax\right)} && {\Map_{\twoCat}\left(\partial C_1,\FUN(C_1,\frX)_\colax \right)} \\
	{\Map_{\twoCat}\left(\partial C_1,\frX\right)} && {\Map_{\twoCat}\left(\partial C_1,\FUN(\partial C_1,\frX)_\colax\right)}
	\arrow[from=1-2, to=2-1]
	\arrow[from=1-4, to=2-3]
	\arrow[from=2-1, to=2-3]
	\arrow[from=1-2, to=1-4]
	\arrow[from=1-4, to=3-4]
	\arrow[from=2-3, to=4-3]
	\arrow[from=4-1, to=4-3]
	\arrow[from=3-2, to=3-4]
	\arrow[from=2-1, to=4-1]
	\arrow[from=1-2, to=3-2]
	\arrow[from=3-2, to=4-1]
	\arrow[from=3-4, to=4-3]
    \end{tikzcd}\]
    in which the top and bottom faces are cartesian.
    Taking fibers over 
    \begin{align*}
        \left([X' \xrightarrow{T'} X'],[X\xrightarrow{T}X]\right) \in \Map_{\twoCat}(\frend,\frX)^{\times 2}  & \simeq \Map_{\twoCat}(\frend \colaxotimes  \partial C_1 ,\frX) \\
        & \simeq \Map_{\twoCat}(\partial C_1 ,\FUN(\frend,\frX)_\colax) 
    \end{align*}
    yields the desired pullback square.
\end{proof}

\begin{lem}\label{lem: formula-for-colax-fixpoints}
    Let $\frX$ be a finitely complete $(\infty,2)$-category.
    For all $X \in \frX$ and $T \in \enEnd_{\frX}(X)$ 
    there is a canonical pullback square:
	\[\begin{tikzcd}
		{\FIXcolax(T\colon X \to X)} & {X^{\Delta^1}} \\
		X & {X^{\partial \Delta^1}}
		\arrow[from=1-2, to=2-2]
		\arrow[from=1-1, to=2-1]
		\arrow["{(\id,T)}", from=2-1, to=2-2]
		\arrow["\lrcorner"{anchor=center, pos=0.125}, draw=none, from=1-1, to=2-2]
		\arrow[from=1-1, to=1-2]
	\end{tikzcd}\]
\end{lem}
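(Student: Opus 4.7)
The plan is to prove the claim by representing both sides of the proposed equivalence as presheaves on $\iota \frX$ and invoking the Yoneda lemma, using \cref{lem:mapping-spaces-in-end-lax} to identify the functor represented by $\FIXcolax(T)$.

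Fix $Y \in \iota \frX$. By the adjunction $\mrm{const} \dashv \FIXcolax$ we have a natural equivalence
\[\Map_{\iota \frX}\bigl(Y,\FIXcolax(T)\bigr) \simeq \Map_{\FUN(\frend,\frX)_\colax}\bigl((Y,\id_Y),(X,T)\bigr).\]
Applying \cref{lem:mapping-spaces-in-end-lax} with $(X',T')=(Y,\id_Y)$ identifies the right-hand side with the pullback
\[\enHom_\frX(Y,X)^\simeq \times_{\enHom_\frX(Y,X)^{\simeq}\times \enHom_\frX(Y,X)^{\simeq}} \Ar\bigl(\enHom_\frX(Y,X)\bigr)^\simeq,\]
in which the bottom map sends $f \mapsto (f\circ \id_Y,\,T\circ f) = (f, T\circ f)$ and the right-hand map is the source--target projection.

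On the other side, the defining property of cotensors yields natural equivalences
\[\Map_{\iota \frX}(Y,X^{\Delta^1}) \simeq \Map_{\CatI}\bigl(\Delta^1,\enHom_\frX(Y,X)\bigr) \simeq \Ar\bigl(\enHom_\frX(Y,X)\bigr)^\simeq,\]
and $\Map_{\iota \frX}(Y,X^{\partial \Delta^1}) \simeq \enHom_\frX(Y,X)^{\simeq} \times \enHom_\frX(Y,X)^{\simeq}$. Under these, the map $\Map(Y,X) \to \Map(Y,X^{\partial \Delta^1})$ induced by $(\id,T) \colon X \to X^{\partial \Delta^1}$ is exactly $f \mapsto (f,T\circ f)$, while $\Map(Y,X^{\Delta^1}) \to \Map(Y,X^{\partial \Delta^1})$ is again source--target. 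Consequently the presheaf represented by the pullback $X \times_{X^{\partial \Delta^1}} X^{\Delta^1}$ agrees, naturally in $Y$, with that represented by $\FIXcolax(T)$, and the Yoneda lemma produces the desired equivalence. The compatibility with the pullback square structure follows because, under this equivalence, the two projections out of $\FIXcolax(T)$ arise respectively from the underlying $1$-morphism and from the $2$-cell datum of the counit $\mrm{const}(\FIXcolax(T)) \to (X,T)$, which match the projections from the pullback on the right.

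The principal obstacle I anticipate is verifying that the pullback appearing in \cref{lem:mapping-spaces-in-end-lax} is natural in the source object $(X',T')$ in a manner compatible with the cotensor universal property; concretely, that the diagram produced in that lemma lifts to a natural equivalence of presheaves on $\iota \frX$ when the source is allowed to vary through $(Y,\id_Y)$. This is essentially a bookkeeping issue that should be handled by revisiting the construction in the proof of \cref{lem:mapping-spaces-in-end-lax} and observing that every step --- the pushout $\partial C_1 \amalg_{\partial C_1} C_1 \simeq \frend$, the internal-hom adjunction, and the fiber over the relevant pair of endomorphisms --- is functorial in $Y$.
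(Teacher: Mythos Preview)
Your proposal is correct and follows essentially the same approach as the paper: apply \cref{lem:mapping-spaces-in-end-lax} to the pair $(Y,\id_Y),(X,T)$, rewrite the resulting pullback in terms of cotensors, and conclude by Yoneda. The paper's version is terser (it leaves the identifications with cotensors and the $\mrm{const}\dashv\FIXcolax$ adjunction implicit), but the argument is the same, and your naturality concern is not addressed there either.
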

\begin{proof}
    Applying \cref{lem:mapping-spaces-in-end-lax} to the pair $(Y,\id_Y), (X,T)$ we get the following pullback square:
	\[\begin{tikzcd}
		{\Map_{\FUN(\mfr{end},\frX)_\colax}((Y,\id_Y),(X,T))} & {\Map_{\iota\frX}(Y,X^{\Delta^1})} \\
		{\Map_{\iota\frX}(Y,X)} & {\Map_{\iota\frX}(Y,X^{\partial \Delta^1})}
		\arrow[from=1-1, to=2-1]
		\arrow[from=1-2, to=2-2]
		\arrow[from=1-1, to=1-2]
		\arrow[""{name=0, anchor=center, inner sep=0}, from=2-1, to=2-2]
		\arrow["\lrcorner"{anchor=center, pos=0.125}, draw=none, from=1-1, to=0]
	\end{tikzcd}\]
	Consequently, the pullback satisfies the universal property of $\FIXcolax(T\colon X \to X)$.
\end{proof}

\begin{rem}
    Let $\frX$ be an \twocategory{} with a terminal object $\pt_\frX \in \iota \frX$. 
    In general it is not necessarily true that $\pt_\frX$ is \textit{bi-terminal}, i.e. it is not necessarily true that $\enHom_\frX(X,\pt_\frX) \simeq \pt \in \Cat_\infty$ for all $X \in \frX$.
    By uniqueness however, $\frX$ admits a bi-terminal object if and only if it admits a terminal object which satisfies the above property.
\end{rem}

\begin{defn}
    A \twocategory{} $\frX \in \Cat_{(\infty,2)}$ is called \textit{\hl{\good{}}} if it satisfies:
    \begin{enumerate}
        \item 
        $\frX$ is finitely bi-complete.
        \item 
        There exists an object $0_\frX \in \frX$, such that for all $X \in \frX$ we have equivalences
        \[\enHom_\frX(X,0_\frX)\simeq \pt \simeq \enHom_\frX(0_\frX,X)\]
        \item 
        For every $X \in \frX$ the terminal map $t_X \colon X \to 0_\frX$ is both left and right adjointable.
    \end{enumerate}
    Note that in this situation we have for every $X \in \frX$ a triple adjunction $i_X \dashv t_X \dashv i_X$ where $i_X \colon 0_\frX \to X$ denotes the initial map.
\end{defn}

\begin{example}\label{ex:modules-over-pointed-are-good}
    Let $\calB \in \Alg(\PrL)$ be a pointed presentably monoidal \category{}. 
    Presentable right $\mcal{B}$-modules form a \good{} \twocategory{}.
\end{example}

\begin{obs}\label{obs:terminal-objects}
    Let $\frX$ be a \good{} \twocategory{}.
    We claim that for all $Y \in \frX$ the composite 
    \[0_{Y \to X} \colon Y \xrightarrow{t_Y} \pt_\frX \xrightarrow{t_X^R} X\]
    is a zero object of $\enHom_\frX(Y,X)$.
    Indeed for any
    $f \colon Y \to X \in \enHom_\frX(Y,X)$ 
    we have
    \[\Map_{\enHom_\frX(Y,X)}(f,i_X \circ t_Y) \simeq \Map_{\enHom_\frX(Y,\pt_\frX)}(t_X \circ f, t_Y) \simeq \pt, \]
    and thus $i_X \circ t_Y \in \enHom_\frX(Y,X)$ is terminal.
    The same argument shows that $i_X \circ t_Y$ is initial.
\end{obs}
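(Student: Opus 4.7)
The plan is to reduce both the terminal and initial property of $i_X \circ t_Y$ to the contractibility of $\enHom_\frX(Y, 0_\frX)$, exploiting the triple adjunction $i_X \dashv t_X \dashv i_X$ guaranteed by axiom $(3)$ of a \good{} \twocategory{}.

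My first step is to invoke the general principle that in any \twocategory{} $\frX$, an adjunction $L \colon X \rightleftarrows X' \colon R$ in $\frX$ induces, via postcomposition, an adjunction of enriched hom categories
\[
L_* \colon \enHom_\frX(Y, X) \rightleftarrows \enHom_\frX(Y, X') \colon R_*
\]
natural in $Y \in \frX$. This is a standard consequence of the enriched functoriality of $\enHom_\frX$, essentially the enriched Yoneda lemma applied to the $(\infty,2)$-categorical unit and counit of the adjunction.

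Applying this principle to the adjunction $t_X \dashv i_X$, I obtain for any $f \in \enHom_\frX(Y, X)$ and any $g \in \enHom_\frX(Y, 0_\frX)$ a natural equivalence
\[
\Map_{\enHom_\frX(Y, X)}(f, i_X \circ g) \simeq \Map_{\enHom_\frX(Y, 0_\frX)}(t_X \circ f, g).
\]
Specializing $g = t_Y$ and invoking axiom $(2)$, namely $\enHom_\frX(Y, 0_\frX) \simeq \pt$, the right-hand side is contractible. Hence $i_X \circ t_Y$ is terminal in $\enHom_\frX(Y, X)$. Dualizing via the other adjunction $i_X \dashv t_X$ gives
\[
\Map_{\enHom_\frX(Y, X)}(i_X \circ t_Y, f) \simeq \Map_{\enHom_\frX(Y, 0_\frX)}(t_Y, t_X \circ f) \simeq \pt,
\]
so $i_X \circ t_Y$ is also initial, and is therefore a zero object.

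The only mildly non-trivial ingredient is the passage from an adjunction between objects of $\frX$ to an adjunction between enriched hom categories. This is entirely formal but requires the enriched \category{} theory developed in \cite{enriched}; once that is granted, the argument is essentially a one-liner, and the rest amounts to unpacking axioms $(2)$ and $(3)$ of \good{}ness in the right order.
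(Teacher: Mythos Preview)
Your proof is correct and follows essentially the same approach as the paper's own argument: both use the adjunction $t_X \dashv i_X$ (postcomposed) to identify $\Map_{\enHom_\frX(Y,X)}(f, i_X \circ t_Y)$ with a mapping space in the contractible category $\enHom_\frX(Y,0_\frX)$, and then invoke the dual adjunction $i_X \dashv t_X$ for the initial claim. Your version is slightly more explicit about the general principle that internal adjunctions induce adjunctions on enriched hom categories, but the substance is identical.
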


\begin{lem}\label{cor:fixlax-of-zero}
    Let $\frX \in \Cat_{(\infty,2)}$ be a \good{} \twocategory{}.
    Then there is a canonical equivalence:
	\[  \FIXcolax(0_{X \to X} \colon X \to X) \simeq X \]
\end{lem}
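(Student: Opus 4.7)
The plan is to prove the equivalence via a Yoneda argument in $\iota \frX$, reducing to the computation of a single mapping space. Concretely, for any $Y \in \iota \frX$, the defining adjunction $\mathrm{const} \dashv \FIXcolax$ gives
\[
\Map_{\iota \frX}\bigl(Y, \FIXcolax(0_{X \to X})\bigr) \simeq \Map_{\iota \FUN(\frend, \frX)_\colax}\bigl((Y, \id_Y), (X, 0_{X \to X})\bigr),
\]
and \cref{lem:mapping-spaces-in-end-lax} identifies the right-hand side with the pullback of
\[
\enHom_\frX(Y, X)^\simeq \xrightarrow{f \mapsto (f,\, 0_{X \to X} \circ f)} \enHom_\frX(Y, X)^\simeq \times \enHom_\frX(Y, X)^\simeq \xleftarrow{} \Ar(\enHom_\frX(Y, X))^\simeq.
\]

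The key simplification is that $0_{X \to X} \circ f$ is again a zero morphism in the mapping category. Writing $0_{X \to X} = i_X \circ t_X$, we have $0_{X \to X} \circ f = i_X \circ (t_X \circ f)$, and since $\enHom_\frX(Y, \pt_\frX) \simeq \pt$ by the definition of good, the composite $t_X \circ f$ is canonically equivalent to $t_Y$. Hence $0_{X \to X} \circ f \simeq i_X \circ t_Y = 0_{Y \to X}$. By \cref{obs:terminal-objects}, this object $0_{Y \to X}$ is a zero object of $\enHom_\frX(Y, X)$, so $\Map_{\enHom_\frX(Y, X)}(f, 0_{Y \to X}) \simeq \pt$ for every $f$. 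It follows that the right-hand leg of the pullback cospan has contractible fibers along the image of $f \mapsto (f, 0_{Y \to X})$, whence the pullback collapses to $\enHom_\frX(Y, X)^\simeq = \Map_{\iota \frX}(Y, X)$.

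Since this chain of equivalences is manifestly natural in $Y$, the Yoneda lemma applied in $\iota \frX$ yields the canonical equivalence $\FIXcolax(0_{X \to X}) \simeq X$. (Equivalences of objects in an \twocategory{} are detected by the underlying \category{}, so this suffices.)

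The only nontrivial point is the identification $t_X \circ f \simeq t_Y$: although it is intuitively obvious, it must be handled at the level of spaces rather than pointwise, i.e.\ we need that the composition map $\enHom_\frX(Y, X)^\simeq \to \enHom_\frX(Y, \pt_\frX)^\simeq$ lands in a contractible space and is itself canonically null-homotopic over the universal choice $t_Y$. This is immediate from the equivalence $\enHom_\frX(Y, \pt_\frX) \simeq \pt$ provided by the good structure, and is really the only input beyond the two lemmas cited above.
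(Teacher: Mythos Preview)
Your proof is correct and follows essentially the same approach as the paper. The only cosmetic difference is that you invoke the defining adjunction together with \cref{lem:mapping-spaces-in-end-lax} directly, whereas the paper first packages this into the pullback formula of \cref{lem: formula-for-colax-fixpoints} and then applies $\enHom_\frX(Y,-)$; both routes reduce to the same key step, namely that $0_{X\to X}\circ f \simeq 0_{Y\to X}$ is terminal in $\enHom_\frX(Y,X)$ by \cref{obs:terminal-objects}, so the pullback collapses.
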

\begin{proof}
    By \cref{lem: formula-for-colax-fixpoints} we have for any $Y \in \frX$ a pullback square of \categories{}:
    \[\begin{tikzcd}
	{\enHom_\frX(Y,\Fixcolax(0_{X \to X} \colon X \to X))} && {\Ar(\enHom_\frX(Y,X))} \\
	{\enHom_\frX(Y,X)} && {\enHom_\frX(Y,X) \times \enHom_\frX(Y,X)}
	\arrow["{(\ev_0,\ev_1)}", from=1-3, to=2-3]
	\arrow["{(\Id,0_{X \to X} \circ (-))}"', from=2-1, to=2-3]
	\arrow[from=1-1, to=2-1]
	\arrow[from=1-1, to=1-3]
    \end{tikzcd}\]
    It thus suffices by Yoneda to show that the functor
    $0_{X \to X} \circ (-) \colon \enHom_\frX(Y,X) \to \enHom_\frX(Y,X)$
    maps all objects to the terminal object.
    This is a consequence of \cref{obs:terminal-objects} since for all $f \colon Y \to X$ we have
    $0_{X \to X} \circ f \simeq 0_{Y \to X}$.
\end{proof}

\begin{const}\label{defn:colaxfix-pointed}
    Let $\frX$ be a \twocategory{} and let $X \in \frX$. 
    As in \cite[Corollary 8.9.]{RuneMonad}, we have
    an equivalence of \categories{}
    \[\{X\} \times_\frX \FUN(\frend,\frX)_{\colax} \simeq \enEnd_\frX(X).\]
    In particular we obtain a functor $\enEnd_\frX(X) \to \FUN(\frend,\frX)_\colax$.
    Suppose $\frX$ is \good{} so that $\enEnd_\frX(X)$ is pointed by $0_{X \to X} \colon X \to X$.
    By \cref{cor:fixlax-of-zero} the composite
    \[\enEnd_\frX(X) \too \FUN(\frend,\frX)_\colax \xrightarrow{\FIXcolax} \frX,\]
    lifts canonically to $(\frX_{/X})_{\id_X/}$.
    We denote the corresponding lift by
    \[\hl{\Fixcolax_{(-)}(X)} \colon \enEnd_\frX(X) \too (\frX_{X/})_{/\id_X}.\]
\end{const}

    Although the main results of this section concern the \twocategory{} $\enRMod_\calA(\PrLst)$, 
    some of the proofs will require a slightly larger \twocategory{}. 
    There are two reasons for this. 
    Firstly, we will often have to consider right adjoints, or even worse, composites of left and right adjoints which in general are neither. 
    Secondly, the right adjoint of an $\calA$-linear functor often fails to be $\calA$-linear.
    It will always be $\calA$-lax-linear however (see \cite[Corollary 3]{RuneLax}).
    We will address these issues by working in a larger
    \twocategory{} in which $\enRMod_\calA(\PrLst)$ embeds.

    We denote by $\Op_\infty$ the \category{} of $\infty$-operads.
    We write $\Assoc \in \Op_\infty$ for the associative operad
    and $\RM$
    for the right
    module operad as defined in \cite[Definition 4.1.1.1 and Variant 4.2.1.36]{HA}).
    Recall that $\Assoc$ is equivalent to the $\bbE_1$-operad,
    hence to unify terminology we will often refer to $\Assoc$-algebras as $\bbE_1$-algebras.
    Given a monoidal \category{} $\calC$ we write ${\calC^\otimes} \to \Assoc$ for its underlying $\infty$-operad.\footnote{If one thinks of $\calC$ as a functor $\Assoc \to \CatI$ satisfying the Segal conditions, $\calC^\otimes \to \Assoc$ is simply the unstraightning.}
    We write ${\Alg(\calC)} \coloneqq  \Alg_{\Assoc}(\calC^\otimes)$
    and similarly 
    ${\RMod(\calC)} \coloneqq  \Alg_{\RM/\Assoc}(\calC^\otimes)$.

\begin{notation}\label{obs:enrichment-slice-operads}
    Given an $\infty$-operad $\calO$ we write $\Cat^\xint_{\infty/\calO^\otimes}$ for the subcategory of $\Cat_{\infty/\calO^\otimes}$ whose objects are functors $\calQ^\otimes \to \calO^\otimes$ which admit cocartesian lifts of inert edges and whose morphisms are functors preserving these cocartesian lifts. 
    Note that $\Op_{\infty/ \calO^\otimes}$ is a full subcategory of $\Cat^\xint_{\infty/\calO^\otimes}$ hence is canonically $\CatI$-enriched. 
    We write $\enOpover{\calO^\otimes}$ for the corresponding \twocategory{}.
    Pulling back along a morphism of $\infty$-operads
    $\calO^\otimes \to \calP^\otimes$ defines a $\CatI$-enriched functor $\enOpover{\calO^\otimes} \too \enOpover{\calP^\otimes}$.
\end{notation}

By \cite[Proposition 2.4.2.5]{HA} we may identify ${\Alg(\CatI)}\coloneqq \Alg_{\Assoc}(\CatI^\times)$ with the replete subcategory of $\Op_{\infty/\Assoc}$ 
whose objects are cocartesian fibrations and whose morphisms preserve \textit{all} cocartesian edges.
Similarly we can identify
${\RMod(\CatI)} \coloneqq  \Alg_{\RM}(\CatI^\times)$
with the corresponding subcategory of $\Op_{\infty/\RM}$. 
We may thus promote $\Alg(\CatI)$ and $\RMod(\CatI)$ to \twocategories{} which we denote by $\enAlg(\CatI)$ and $\enRMod(\CatI)$ respectively.
The \twocategory{} of right modules over a fixed monoidal \category{} $\calC \in \Alg(\CatI)$ is defined as 
${\enRMod_\calC(\CatI)}\coloneqq  \{\calC\} \times_{\enAlg(\CatI)} \enRMod(\CatI)$.
Given $\calU, \calV \in \RMod_\calC(\CatI)$
we write ${\Fun_\calC(\calU,\calV)}\coloneqq \enHom_{\enRMod_\calC(\CatI)}(\calU,\calV)$ for the \category{} of $\calC$-linear functors.

Let ${\enAlg^\lax(\CatI)} \subseteq \enOpover{\Assoc}$ denote the full sub-\twocategory{} spanned by the cocartesian fibrations of $\infty$-operads and similarly for ${\enRMod^\lax(\CatI)} \subseteq \enOpover{\RM}$.
Pulling back along the inclusion $\Assoc \hookrightarrow \RM$ defines a $(\infty,2)$-functor:
 \[ \forget \colon \enRMod^\lax(\CatI) \too \enAlg^\lax(\CatI)\]

\begin{defn}
    Given a monoidal \category{} $\calC \in \Alg(\CatI) \subseteq \enAlg^\lax(\CatI)$ we define the \twocategory{} of \hl{\textit{$\calC$-modules and $\calC$-lax-linear functors}} by:
    \[ \hl{\enRMod_\calC^\lax(\CatI)} \coloneqq  \left\{\calC\right\} \times_{\enAlg^\lax(\CatI)} \enRMod^\lax(\CatI) \quad \in \twoCat\]
    Given $\calU, \calV \in \RMod_\calC(\CatI)$
    we write $\Fun^\lax_\calC(\calU,\calV) \coloneqq \enHom_{\enRMod^\lax_\calC(\CatI)}(\calU,\calV)$ for the \category{} of $\calC$-lax-linear functors.
    Note that the natural functor  $\enRMod_\calC(\CatI) \to \enRMod^\lax_\calC(\CatI)$
    induces a fully faithful embedding:
    \[ \Fun_\calC(\calU,\calV) \hookrightarrow \Fun^\lax_\calC(\calU,\calV)\]
    A $\calC$-lax-linear functor
    $F \colon \calU \to \calV$ lies in the essential image if and only if it is $\calC$-linear, i.e. for every $U \in \calU$ and 
    $C \in \calC$ the lax linear structure map $F(U) \otimes C \iso F(U \otimes C)$ is an equivalence.
\end{defn}

\begin{obs} 
    Let $\calC \in \Alg(\CatI)$ and let $\calU,\calV \in \RMod_\calC(\CatI)$ be right $\calC$-modules corresponding to cocartesian fibrations $(\calC,\calU)^\otimes \to \RM$, and $(\calC,\calV)^\otimes\to \RM$ respectively. 
    We have a canonical equivalence:
    \begin{align*}
        \Fun^\lax_\calC(\calU,\calV) &\simeq \{\Id_{\calC^\otimes}\} \times_{\enHom_{\enOpover{\Assoc}}(\calC^\otimes,\calC^\otimes)} \enHom_{\enOpover{\RM}}\left((\calC,\calU)^\otimes,(\calC,\calU)^\otimes\right) \\
        & \simeq \{\Id_{\calC^\otimes}\} \times_{\Fun_{\Cat^\xint_{\infty/\Assoc}}(\calC^\otimes,\calC^\otimes)} \Fun_{\Cat^\xint_{\infty/\RM}}\left((\calC,\calU)^\otimes,(\calC,\calU)^\otimes\right)
    \end{align*}
    Passing to the fiber over $\mfr{m} \in \RM$ defines a $\CatI$-enriched functor $\enRMod^\lax_\calC(\CatI) \to \Cat_\infty$.
    Using the description of the hom categories above we see that on the corresponding hom categories it induces a conservative, limit preserving functor
    $\Fun^\lax_\calC(\calU,\calV) \to \Fun(\calU,\calV)$.
\end{obs}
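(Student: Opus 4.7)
The plan is to handle the two claimed equivalences and the two properties of the fiberwise functor by successive unpackings, with conservativity being the one nontrivial step.

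For the equivalences: $\enRMod^\lax_\calC(\CatI)$ is defined as a pullback in $\twoCat$, so its $\CatI$-enriched hom \category{} is the pullback of the corresponding hom \categories{} of $\enAlg^\lax(\CatI)$ and $\enRMod^\lax(\CatI)$; these are in turn full sub-\twocategories{} of $\enOpover{\Assoc}$ and $\enOpover{\RM}$, giving the first displayed equivalence. By the construction of $\enOpover{\calO^\otimes}$ in the preceding Notation, its enriched hom is the mapping \category{} in $\Cat^\xint_{\infty/\calO^\otimes}$, yielding the second.

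Next, evaluation at $\mfr{m} \in \RM$ defines a $\CatI$-enriched, limit-preserving functor $\Cat^\xint_{\infty/\RM} \to \CatI$ sending $\calQ^\otimes \to \RM$ to the fiber $\calQ^\otimes_{\mfr{m}}$; restricting along $\enRMod^\lax_\calC(\CatI) \to \enOpover{\RM}$ produces the desired enriched functor, whose effect on hom \categories{} is exactly the restriction $\Fun_{\Cat^\xint_{\infty/\RM}}\bigl((\calC,\calU)^\otimes,(\calC,\calV)^\otimes\bigr) \to \Fun(\calU,\calV)$. For limit preservation: limits in the source are computed pointwise in $(\calC,\calV)^\otimes$, limits in $\Fun(\calU,\calV)$ are computed pointwise in $\calV$, and the pullback defining $\Fun^\lax_\calC(\calU,\calV)$ preserves limits, so the composite is limit preserving.

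The main step is conservativity. A morphism $\alpha$ of $\Fun^\lax_\calC(\calU,\calV)$ is a natural transformation between two $\calC$-lax-linear functors $F,G \colon (\calC,\calU)^\otimes \to (\calC,\calV)^\otimes$ that are the identity on the $\calC^\otimes$ factor and preserve inert edges. We must show that if the restriction of $\alpha$ to the fiber over $\mfr{m}$ is an equivalence, then $\alpha$ is an equivalence. Since equivalences of natural transformations are checked componentwise, fix $X \in (\calC,\calU)^\otimes$ lying over a color word in $\RM$, necessarily of the form $(\mfr{a},\ldots,\mfr{a})$ or $(\mfr{a},\ldots,\mfr{a},\mfr{m})$. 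The Segal decomposition maps are inert, so $X$ decomposes via inert morphisms into unary constituents $c_1,\ldots,c_k \in \calC$ and possibly $u \in \calU$; preservation of inerts then identifies $\alpha_X$ with the product of the $\alpha_{c_i}$'s (all identities, since $F$ and $G$ agree on $\calC^\otimes$) and $\alpha_u$ (an equivalence by hypothesis), so $\alpha_X$ is itself an equivalence. The mild subtlety is that only preservation of \emph{inert} cocartesian edges is available — which suffices precisely because the Segal decomposition maps are inert.
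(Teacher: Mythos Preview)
The paper treats this statement as a self-evident observation and gives no separate proof; the only justification offered is the sentence ``Using the description of the hom categories above we see that\ldots''. Your proposal is correct and simply makes explicit the argument the paper leaves to the reader: unwinding the pullback definition of $\enRMod^\lax_\calC(\CatI)$ and the identification of $\enOpover{\calO^\otimes}$ as a full sub-\twocategory{} of $\Cat^\xint_{\infty/\calO^\otimes}$ gives the two displayed equivalences, and the Segal decomposition via inert edges yields conservativity. This is exactly the intended reasoning, spelled out in more detail than the paper bothers with.
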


The forgetful functor $\PrLst \to \CatI$ is faithful, replete, and lax monoidal. 
Consequently for $\calA \in \Alg(\PrLst)$ we may identify $\RMod_{\calA}(\PrLst)$ with the replete subcategory of $\RMod_{\calA}(\CatI)$ spanned by: 
\begin{itemize}
    \item 
    \textbf{Objects} - stable presentable right $\calA$-modules.\footnote{A \textit{presentable right $\calA$-module} is an $\calA$-module $\mcal{U}$ whose underlying \category{} is stable and presentable and whose action functor $\otimes \colon \mcal{U} \times \calA \to \mcal{U}$ preserves colimits in both variables.}
    \item
    \textbf{Morphisms} - colimit preserving $\calA$-linear functors.
\end{itemize}
We let $\enRMod_\calA(\PrLst)$ denote the corresponding sub-$(\infty,2)$-category of $\enRMod(\CatI)$.
Given $\mcal{U}, \mcal{V} \in \enRMod_\calA(\xPrL{}{\st})$ we write $\Fun^\mrm{L}_\calA(\mcal{U},\mcal{V}) \coloneqq \enHom_{\enRMod_\calA(\xPrL{}{\st})}(\mcal{U},\mcal{V})$ for the corresponding enriched hom. 
Similarly for $\mcal{U} \in \enRMod_\calA(\PrLst)$ we write $\enEnd_\calA^\mrm{L}(\mcal{U}) \coloneqq \enHom_{\enRMod_\calA(\PrLst)}(\mcal{U},\mcal{U})$.

\begin{obs}\label{obs:finite-limits-of-linear-left-adjoints}
    Let $\calA \in \Alg(\PrLst)$ and let $\calU,\calV \in \enRMod_\calA(\PrLst)$.
    The composite functor
    \[\enRMod_\calA(\PrLst) \too \enRMod_\calA(\CatI) \too \enRMod^\lax_\calA(\CatI)\]
    induces a fully faithful embedding 
    $\Fun^\mrm{L}_\calA(\calU,\calV) \hookrightarrow \Fun^\lax_\calA(\calU,\calV)$.
    We claim that this embedding creates and preserves finite limits.
    This follows by observing that the forgetful functors 
    \[\Fun_\calA^\lax(\calU,\calV) \to \Fun(\calU,\calV) \quad \text{ and } \quad\Fun^\mrm{L}_\calA(\calU,\calV) \to \Fun(\calU,\calV)\]
    are conservative and preserve finite limits (for the latter functor this holds by stability).
\end{obs}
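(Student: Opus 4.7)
My strategy is to follow the parenthetical hint in the observation and factor $F \colon \Fun^\mrm{L}_\calA(\calU,\calV) \hookrightarrow \Fun^\lax_\calA(\calU,\calV)$ through the common forgetful target $\Fun(\calU,\calV)$. Denote the two forgetful functors by $U^\mrm{L}$ and $U^\lax$, so that $U^\mrm{L} \simeq U^\lax \circ F$. The preceding observation already provides that $U^\lax$ is conservative and preserves all limits, so the whole argument hinges on establishing the analogous finite-limit statement for $U^\mrm{L}$, and then combining the two via fully faithfulness of $F$.

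\textbf{Key step: $U^\mrm{L}$ preserves and creates finite limits.} Given a finite diagram $D\colon I\to\Fun^\mrm{L}_\calA(\calU,\calV)$, the candidate limit is the pointwise limit $L(U)\coloneqq \lim_{i\in I} D_i(U)$. I need two things: that $L$ is colimit-preserving, and that $L$ inherits a canonical $\calA$-linear structure from the $D_i$. The first is classical: in the stable presentable \category{} $\calV$ finite limits commute with all colimits, so any pointwise finite limit of colimit-preserving functors is again colimit-preserving. The second is the heart of the matter: for each $A\in\calA$, the endofunctor $(-)\otimes A \colon \calV\to\calV$ is colimit-preserving between stable presentable \categories{}, hence a left adjoint, hence exact, and in particular commutes with finite limits. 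Combining this with the $\calA$-linearity equivalences $D_i(U)\otimes A\simeq D_i(U\otimes A)$ yields a natural equivalence
\[
L(U)\otimes A \simeq \lim_i (D_i(U) \otimes A) \simeq \lim_i D_i(U\otimes A) \simeq L(U\otimes A).
\]
Its coherent upgrade to a genuine $\calA$-linear structure on $L$ is handled efficiently in the cocartesian-fibration model $\Fun^\lax_\calA(\calU,\calV)\subseteq \Fun_{\Cat^\xint_{\infty/\RM}}((\calA,\calU)^\otimes,(\calA,\calV)^\otimes)$ from the preceding observation, where finite limits and the linearity condition are visible simultaneously as conditions on maps in $\Cat^\xint_{\infty/\RM}$. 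Conservativity of $U^\mrm{L}$ is obvious.

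\textbf{Conclusion.} With the key step in hand and $U^\lax$ already disposed of, the finite-limit statement for $F$ follows formally. Given a finite diagram $D$ in $\Fun^\mrm{L}_\calA(\calU,\calV)$ with limit $L$ from the key step, the natural comparison $F(L) \to \lim_{\Fun^\lax_\calA}(F\circ D)$ is mapped by $U^\lax$ to the identity on the pointwise limit; conservativity of $U^\lax$ promotes this to an equivalence, so $F$ preserves the limit. For creation, any putative limit of $F\circ D$ in $\Fun^\lax_\calA$ is detected by $U^\lax$ to have underlying functor the same pointwise limit, which is automatically $\calA$-linear by the key step and thus lies in the essential image of the fully faithful $F$.

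\textbf{Main obstacle.} The sole non-formal ingredient is the exactness of $(-)\otimes A \colon \calV\to\calV$, which is a direct consequence of the stability of $\calV$. The rest is formal manipulation of a pair of conservative, limit-preserving forgetful functors, together with fully faithfulness of $F$. The only genuine $(\infty,2)$-categorical subtlety — the coherent assembly of the $\calA$-linear structure on~$L$ — is neutralized by appealing to the operadic slice model already spelled out in the previous observation.
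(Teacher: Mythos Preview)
Your proposal is correct and follows exactly the approach sketched in the observation itself: factor through the common target $\Fun(\calU,\calV)$, use that both forgetful functors are conservative and finite-limit-preserving (the latter by stability), and conclude formally. You have simply unpacked the paper's one-sentence justification into a detailed argument, with the same key ingredient being that in a stable presentable $\calV$ the action $(-)\otimes A$ is exact so pointwise finite limits of $\calA$-linear left adjoints remain $\calA$-linear left adjoints.
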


\begin{notation}
    For $\frX \in \twoCat$ we let $\hl{\frX^\ladj}$ and $\hl{\frX^\radj}$ denote the sub-\twocategories{} of $\frX$ spanned by the left and right adjoints respectively.
\end{notation}

\begin{obs}\label{obs:passing-to-adjoint}
    By \cite[Corollary C]{RuneLax}
    , passing to left adjoints gives rise to a commutative square of \twocategories{}:
    \[\begin{tikzcd}
    	{\RModlax(\CatI)^\radj} & {\left(\RModlax(\CatI)^\ladj\right)^{(1,2)-\op}} \\
    	{\enAlg^\lax(\CatI)^\radj} & {\left(\enAlg^\lax(\CatI)^\ladj\right)^{(1,2)-\op}}
    	\arrow["\simeq", from=1-1, to=1-2]
    	\arrow["\forget"', from=1-1, to=2-1]
    	\arrow["\forget"', from=1-2, to=2-2]
    	\arrow["\simeq", from=2-1, to=2-2]
    \end{tikzcd}\]
    where both horizontal functors are equivalences.
    Since the identity functor is self adjoint we may pass to vertical fibers at $\calC \in \enAlg^\lax(\CatI)$ to obtain an equivalence of \twocategories{}:
    \[\RModlax_\calC(\CatI)^\radj \iso \left(\RModlax_\calC(\CatI)^\ladj\right)^{(1,2)-\op}\]
\end{obs}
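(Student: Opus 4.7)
The plan is to obtain the commuting square as a direct instance of the adjoint-functor equivalence from \cite{RuneLax}, and then to extract the equivalence of fibers by a formal manipulation, using only that the identity $1$-morphism is tautologically self-adjoint.

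First, I would apply \cite[Corollary C]{RuneLax} twice, once to the \twocategory{} $\RModlax(\CatI)$ and once to $\enAlg^\lax(\CatI)$, to obtain the two horizontal equivalences
\[\RModlax(\CatI)^\radj \iso \left(\RModlax(\CatI)^\ladj\right)^{(1,2)\text{-}\op}, \qquad \enAlg^\lax(\CatI)^\radj \iso \left(\enAlg^\lax(\CatI)^\ladj\right)^{(1,2)\text{-}\op}.\]
Informally each such equivalence is the identity on objects and on $2$-cells, but on $1$-morphisms it sends a right-adjointable arrow to its left adjoint (with the unit and counit providing the coherence). The essential point is that this construction is natural in the ambient \twocategory{}; in particular it is compatible with any $(\infty,2)$-functor that preserves adjoints. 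The forgetful functor $\forget \colon \RModlax(\CatI) \to \enAlg^\lax(\CatI)$ sends the $\calA$-component of an adjunction of $\calC$-lax-linear functors to the identity adjunction of $\calC$, and so certainly preserves adjunctions. Naturality of Rune's equivalence with respect to $\forget$ yields the commutativity of the asserted square.

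Next I would take vertical fibers over $\calC \in \enAlg^\lax(\CatI)$ on each side of the square. On the left hand side the fiber is
\[\{\Id_\calC\}\times_{\enAlg^\lax(\CatI)^\radj}\RModlax(\CatI)^\radj \simeq \RModlax_\calC(\CatI)^\radj,\]
and analogously on the right hand side one obtains $\left(\RModlax_\calC(\CatI)^\ladj\right)^{(1,2)\text{-}\op}$, provided that under the bottom equivalence $\Id_\calC$ (viewed as a right-adjointable $1$-morphism in $\enAlg^\lax(\CatI)$) is sent to $\Id_\calC$ (viewed as a left-adjointable $1$-morphism in the $(1,2)$-opposite). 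This last point is the only subtle observation: the identity $1$-morphism on any object is canonically its own left and right adjoint, with unit and counit the identity $2$-cells, and Rune's equivalence sends this distinguished self-adjunction to itself. Hence the two fibers are identified, giving the desired equivalence
\[\RModlax_\calC(\CatI)^\radj \iso \left(\RModlax_\calC(\CatI)^\ladj\right)^{(1,2)\text{-}\op}.\]

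The main obstacle, such as it is, is bookkeeping: one must keep straight which direction of variance in the $2$-cells is being reversed, and check that the naturality square of the adjoint equivalence with respect to $\forget$ is actually a commuting diagram of \twocategories{} rather than merely of underlying \categories{}. Both of these are accounted for by the precise statement of \cite[Corollary C]{RuneLax}, so no additional calculation is required beyond identifying the fibers.
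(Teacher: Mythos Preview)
Your proposal is correct and matches the paper's own reasoning. The paper presents this as an observation with the argument embedded in the statement itself---invoking \cite[Corollary C]{RuneLax} for the square and the self-adjointness of the identity for the passage to fibers---and your sketch is a faithful elaboration of precisely those two steps.
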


\subsection{Colax fixed points in the stable setting}

Let $\calA$ be a stable presentably monoidal \category{}.
Presentable right $\calA$-modules form a \good{} \twocategory{} $\enRMod_\calA(\PrLst)$ (see \cref{ex:modules-over-pointed-are-good}).
Given $\mcal{N} \in \enRMod_\calA(\PrLst)$, \cref{defn:colaxfix-pointed} supplies a functor:
\[\Fixcolax_{(-)}(\mcal{N}) \colon \mbf{End}_{\enRMod_{\calA}(\PrLst)}(\mcal{N}) \too \left(\enRMod_{\calA}(\PrLst)_{\mcal{N}/}\right)_{/\id_{\mcal{N}}}\]
In this subsection we prove \cref{prop:fixcolax-stable-setting} which shows that due to stability this functor is much better behaved than one might initially expect.

Throughout this section we will encounter left adjoint functors of the form
$q(\infty) \to \lim_{i \in I} q(i)$
where $q \colon I^{\large{\triangleleft}} \to \PrL$ is some diagram.
Using \cite[Theorem 5.5]{AdjDescent} we may compute the right adjoint of such a functor at an object by taking a limit in $q(\infty)$ of a certain canonical diagram indexed by $I^\op$ which is assembled from all the adjunctions. 
We will use the term \textit{"adjoint descent"} to refer to this technique.

\begin{notation}
    In this section we use the following notation.
    \begin{itemize}
        \item 
        We fix a stable presentably monoidal \category{} $\calA \in \Alg(\PrLst)$.
        \item 
        We fix a presentable right $\calA$-module $\mcal{N} \in \RMod_\calA(\PrLst)$.
        \item 
        We fix a colimit preserving $\calA$-linear endofunctor $T \colon \mcal{N} \to \mcal{N} \in \Fun^{\mrm{L}}_\calA(\mcal{N},\mcal{M})$.
\end{itemize}
For the next couple of proofs we will also need the following more specific notation.
\begin{itemize}
        \item 
        We write $\ev_\epsilon \colon \mcal{N}^{\Delta^1} \to \mcal{N}$ for the evaluation at $\epsilon \in \{0,1\}$ and $\ev^L_\epsilon \colon \mcal{N} \to \mcal{N}^{\Delta^1}$ for its left adjoint.
        \item 
        We write
        $\can \colon \ev_0 \to \ev_1$
        for the tautological natural transformation
        and
        $\can^L \colon \ev_1^L \to \ev_0^L$
        for its left mate.
        \item 
        Given an ($\calA$-linear) natural transformation 
        $\alpha \colon \Id \to T$
        we let
        $j_{\alpha !} \colon \mcal{N} \too \mcal{N}^{\Delta^1}$ 
        denote the $\calA$-linear functor defined by $X \longmapsto  \left(X \xrightarrow{\alpha} T(X)\right)$.
        In the case $\alpha =0$ we write $j_!\coloneqq j_{0,!}$. 
    \end{itemize}
\end{notation}

\begin{lem}\label{lem:right-adjoint-from-arrow-category}
    Let $\alpha \colon \Id_\mcal{N} \to T$ be an $\calA$-linear natural transformation.
    The right adjoint $j^{\ast}_\alpha \colon \mcal{N}^{\Delta^1} \to \mcal{N}$
	sits in a pullback square of $\mcal{A}$-lax-linear functors:
	\[\begin{tikzcd}
		{j_\alpha^\ast} && \ev_1\\
		{\ev_0 \oplus T^R\ev_1} && {\ev_1 \oplus \ev_1}
		\arrow["\Delta", from=1-3, to=2-3]
		\arrow[from=1-1, to=1-3]
		\arrow[from=1-1, to=2-1]
		\arrow[""{name=0, anchor=center, inner sep=0}, "{\can \oplus \alpha^R (\ev_1)}"', from=2-1, to=2-3]
		\arrow["\lrcorner"{anchor=center, pos=0.125}, draw=none, from=1-1, to=0]
	\end{tikzcd}\]
	where $\alpha^R \colon T^R \to \Id_\mcal{N}$ denotes the right mate of $\alpha$.
\end{lem}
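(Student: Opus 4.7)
The plan is to compute $j_\alpha^\ast$ pointwise via the Yoneda lemma and the adjunction $T \dashv T^R$, rewrite the resulting formula in the stable pullback form displayed in the statement, and then upgrade everything to an equivalence of $\calA$-lax-linear functors. For the pointwise computation, fix $X \in \mcal{N}$ and $(f \colon Y \to Z) \in \mcal{N}^{\Delta^1}$. A map $j_\alpha(X) \to (Y \xrightarrow{f} Z)$ in $\mcal{N}^{\Delta^1}$ is a commutative square with top edge $\alpha_X$ and bottom edge $f$, so
\[\Map_{\mcal{N}^{\Delta^1}}(j_\alpha(X), (Y \xrightarrow{f} Z)) \simeq \Map_\mcal{N}(X,Y) \times_{\Map_\mcal{N}(X,Z)} \Map_\mcal{N}(TX, Z).\]
By the adjunction $T \dashv T^R$ and the defining relation of the mate $\alpha^R \colon T^R \to \Id_\mcal{N}$, pre-composition with $\alpha_X$ on the factor $\Map_\mcal{N}(TX, Z)$ is identified with post-composition with $\alpha^R_Z \colon T^R Z \to Z$; hence this pullback becomes $\Map_\mcal{N}(X,Y) \times_{\Map_\mcal{N}(X,Z)} \Map_\mcal{N}(X, T^R Z)$. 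By the Yoneda lemma, $j_\alpha^\ast(Y \xrightarrow{f} Z) \simeq Y \times_Z T^R Z$ with structure maps $f$ and $\alpha^R_Z$.

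Since $\mcal{N}$ is stable (in particular pointed with biproducts), this ordinary pullback admits the standard rewriting as the pullback of the cospan
\[Y \oplus T^R Z \xrightarrow{f \oplus \alpha^R_Z} Z \oplus Z \xleftarrow{\Delta} Z,\]
which, natural in $(f \colon Y \to Z)$, is exactly the cospan appearing in the statement. It remains to upgrade this identification to $\calA$-lax-linear functors. The functors $\ev_0, \ev_1$ are $\calA$-linear left adjoints and $T^R$ is $\calA$-lax-linear by \cref{obs:passing-to-adjoint}, so every vertex and edge of the cospan lies in $\Fun^\lax_\calA(\mcal{N}^{\Delta^1}, \mcal{N})$; similarly $j_\alpha^\ast$ lies in this same \category{} as the right adjoint of the $\calA$-linear left adjoint $j_\alpha$. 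Since finite limits in $\Fun^\lax_\calA(\mcal{N}^{\Delta^1}, \mcal{N})$ are computed pointwise in $\mcal{N}$ (the forgetful functor to $\Fun(\mcal{N}^{\Delta^1}, \mcal{N})$ being conservative and limit preserving, cf.~\cref{obs:finite-limits-of-linear-left-adjoints}), the pointwise equivalence assembles into the required pullback square of $\calA$-lax-linear functors.

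The main subtle point to execute carefully is the identification of pre-composition with $\alpha_X$ on $\Map_\mcal{N}(TX, Z)$ with post-composition by $\alpha^R_Z$ on $\Map_\mcal{N}(X, T^R Z)$ naturally in $(X, f)$; this boils down to unwinding the definition of the mate $\alpha^R_Z = \epsilon_Z \circ \alpha_{T^R Z}$ via the triangle identities of the adjunction $T \dashv T^R$. Everything else is a formal consequence of the Yoneda lemma and the general machinery of lax-linear right adjoints recorded earlier.
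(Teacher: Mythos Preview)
Your pointwise computation via Yoneda is correct and more direct than the paper's route: the paper instead builds a pushout square of $\calA$-linear left adjoints in $\Fun^\lax_\calA(\mcal{N},\mcal{N}^{\Delta^1})$ with $j_{\alpha,!}$ in one corner, and then invokes the mate equivalence of \cref{obs:passing-to-adjoint} to pass wholesale to right adjoints, obtaining the pullback square in $\Fun^\lax_\calA(\mcal{N}^{\Delta^1},\mcal{N})$ directly. Both routes land on the same intermediate identification $j_\alpha^\ast \simeq \ev_0 \times_{\ev_1} T^R\ev_1$.

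The gap in your argument is the final ``assembly'' step. Knowing that the forgetful functor $\Fun^\lax_\calA(\mcal{N}^{\Delta^1},\mcal{N}) \to \Fun(\mcal{N}^{\Delta^1},\mcal{N})$ is conservative and limit-preserving tells you that a \emph{given} morphism in $\Fun^\lax_\calA$ is an equivalence if and only if it is so pointwise, and that a \emph{given} square in $\Fun^\lax_\calA$ is cartesian if and only if it is so pointwise. It does \emph{not} allow you to conclude that two objects of $\Fun^\lax_\calA$ which happen to have equivalent underlying functors are themselves equivalent: the lax-linear structure is genuine extra data, and you have not produced a comparison map $j_\alpha^\ast \to \ev_0 \times_{\ev_1} T^R\ev_1$ in $\Fun^\lax_\calA$. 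Your Yoneda argument is carried out with unenriched mapping spaces and only yields such a map in $\Fun$. The paper's approach sidesteps this entirely: since the pushout square of left adjoints lives in $\Fun^\lax_\calA$ from the outset, applying the $2$-categorical mate equivalence of \cref{obs:passing-to-adjoint} produces the pullback square---including the comparison maps out of $j_\alpha^\ast$---already in $\Fun^\lax_\calA$. To repair your argument you would need to either redo the Yoneda computation internally to the $(\infty,2)$-category $\enRMod^\lax_\calA(\CatI)$, or construct the maps $j_\alpha^\ast \to \ev_0$ and $j_\alpha^\ast \to T^R\ev_1$ as mates of the evident $\calA$-linear maps $\ev_0^L \to j_{\alpha,!}$ and $\ev_1^L T \to j_{\alpha,!}$; the latter is essentially what the paper does.
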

\begin{proof}
	Consider the following cube in $\Fun^\lax_\calA(\mcal{N}^{\Delta^1},\mcal{N})$:
	\[\begin{tikzcd}
	0 &&& 0 \\
	&& {\ev_0} &&& {\ev_0} \\
	{\ev_0} &&& {\ev_1} \\
	&& {\ev_0} &&& {\ev_1}
	\arrow[from=1-1, to=3-1]
	\arrow[from=1-1, to=1-4]
	\arrow[from=1-4, to=2-6]
	\arrow["{=}"{description}, from=3-1, to=4-3]
	\arrow["{\can}"{description, pos=0.4}, from=3-1, to=3-4]
	\arrow["{=}"{description}, from=3-4, to=4-6]
	\arrow["{\can}"{description}, from=4-3, to=4-6]
	\arrow["{=}"{description, pos=0.3}, from=2-3, to=4-3]
	\arrow[from=1-1, to=2-3]
	\arrow[from=1-4, to=3-4]
	\arrow["{\can}"{description}, from=2-6, to=4-6]
	\arrow["{=}"{description}, from=2-3, to=2-6]
    \end{tikzcd}\]
    Treating the vertical arrows as $\calA$-lax-linear functors $\mcal{N}^{\Delta^1} \to \mcal{N}^{\Delta^1}$ 
    we may interpret this cube as a pushout square in $\Fun^\lax_\calA(\mcal{N}^{\Delta^1},\mcal{N}^{\Delta^1})$.
    Precomposing with $j_{\alpha,!}$ then yields the following pushout square:
	\[\begin{tikzcd}
		{\ev_1^L\ev_0 j_{\alpha,!}} && {\ev_1^L\ev_1 j_{\alpha,!}} \\
		\\
		{\ev_0^L\ev_0 j_{\alpha,!}} && {j_{\alpha,!}}
		\arrow["{c_{\ev_1}(j_{\alpha,!})}"{description}, from=1-3, to=3-3]
		\arrow["{c_{\ev_0}(j_{\alpha,!})}"{description}, from=3-1, to=3-3]
		\arrow["{\can^L(\ev_0j_{\alpha,!})}"{description}, from=1-1, to=3-1]
		\arrow["{\ev_1^L(\can(j_{\alpha,!}))}"{description}, from=1-1, to=1-3]
		\arrow["\lrcorner"{anchor=center, pos=0.125, rotate=180}, draw=none, from=3-3, to=1-1]
	\end{tikzcd}\]
	One readily checks that
	$\ev_0(\can(j_{\alpha,!})):\ev_0j_{\alpha,!}\longrightarrow \ev_1 j_{\alpha,!}$ 
	is canonically equivalent to
	$\alpha: \Id \longrightarrow T$. 
	Under this identification, passing to right adjoints in the above pushout square gives the following square:
	\[\begin{tikzcd}
		{j^\ast_\alpha} & {T^R \ev_1} \\
		{\ev_0} & {\ev_1}
		\arrow["\lrcorner"{anchor=center, pos=0.125}, draw=none, from=1-1, to=2-2]
		\arrow[from=1-1, to=1-2]
		\arrow["{\alpha^R (\ev_1)}", from=1-2, to=2-2]
		\arrow["{\can}"', from=2-1, to=2-2]
		\arrow[from=1-1, to=2-1]
	\end{tikzcd}\]
	which is cartesian by \cref{obs:passing-to-adjoint}. 
	The required pullback square is now easily extracted.
\end{proof}

\begin{notation}\label{notation:notation-for-colaxfix}
    By \cref{lem: formula-for-colax-fixpoints} the lax fixed points $\Fixcolax_T(\mcal{N})$ sits in a pullback square:
	\label{diag:lax-fix-pullback-linear}
	\[\begin{tikzcd}
		{\Fixcolax_T(\mcal{N})} && {\mcal{N}^{\Delta^1}} \\
		{\mcal{N}} && {\mcal{N}^{\partial\Delta^1}}
		\arrow["\pi_!",from=1-1, to=1-3]
		\arrow["{(\ev_0,\ev_1)}",from=1-3, to=2-3]
		\arrow["q_!"', from=1-1, to=2-1]
		\arrow[""{name=0, anchor=center, inner sep=0}, "{(\id,T)}"', from=2-1, to=2-3]
		\arrow["\lrcorner"{anchor=center, pos=0.125}, draw=none, from=1-1, to=0]
	\end{tikzcd}\]
	where $q_!$ and $\pi_!$ are uniquely determined and are given by:
    \[\hl{q_!} \colon (X,\varphi) \longmapsto X \quad \quad \hl{\pi_!} \colon (X,\varphi) \longmapsto \varphi\]
    We write $\hl{\gamma_\can} : q_! \to T q_!$ for the tautological natural transformation.
\end{notation}

\begin{const}\label{const:defn-of-section-for-colaxfix}
	An $\calA$-linear natural transformation $\alpha : \Id \to T$ 
	gives rise to a commutative square in $\RMod_\calA(\PrLst)$:
	\[\begin{tikzcd}
		{\mcal{N}} && {\mcal{N}^{\Delta^1}} \\
		{\mcal{N}} && {\mcal{N}^{\partial \Delta^1}}
		\arrow["{(\Id \xrightarrow{\alpha} T)}", from=1-1, to=1-3]
		\arrow[from=1-3, to=2-3]	
		\arrow["{(\Id,T)}", from=2-1, to=2-3]
		\arrow["\Id"', from=1-1, to=2-1]
	\end{tikzcd}\]
    which in turns gives rise to a section of the forgetful functor
    $q_! \colon \Fixcolax_T(\mcal{N})\to \mcal{N}$.
    In particular the natural transformation 
    $0 \colon \Id \to T$ 
    gives a section $i_! \colon \mcal{N} \to \Fixcolax_T(\mcal{N})$ and we denote the corresponding adjunction by:
    \[\hl{i_!} \colon \mcal{N} \adj \Fixcolax_T(\mcal{N}) \colon \hl{i^\ast}\]
\end{const}

\begin{prop}\label{prop:cofib-seq-laxfix}
	There is a fiber sequence in $\Fun^\lax_\calA(\Fixcolax_T(\mcal{N}),\mcal{N})$ of the form:
	\[ i^{\ast} \xrightarrow{i^{\ast}(u_q)} i^{\ast}q^{\ast}q_! \simeq q_! \overset{\gamma_\can}{\longrightarrow} T q_!\]
\end{prop}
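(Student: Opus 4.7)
The plan is to compute the fiber of $\gamma_\can \colon q_! \to T q_!$ and identify it with $i^*$, in a way that makes the comparison map match $i^*(u_q)$. Everything takes place in the stable $\infty$-category $\Fun^\lax_\calA(\Fixcolax_T(\mcal{N}), \mcal{N})$, in which finite limits are computed pointwise via the conservative limit-preserving functor to $\Fun(\Fixcolax_T(\mcal{N}), \mcal{N})$ (the lax-linear analog of \cref{obs:finite-limits-of-linear-left-adjoints}). From the defining pullback square in \cref{notation:notation-for-colaxfix} we read off $\ev_0 \circ \pi_! \simeq q_!$ and $\ev_1 \circ \pi_! \simeq T \circ q_!$, and by construction $\gamma_\can \simeq \can \circ \pi_!$, where $\can \colon \ev_0 \to \ev_1$ is the tautological transformation. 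Since the pointwise fiber of $\can$ is the classical fiber-of-arrow functor $\fib \colon \varphi \mapsto \fib(\varphi)$, precomposing with $\pi_!$ yields a fiber sequence
\[\fib \circ \pi_! \to q_! \xrightarrow{\gamma_\can} T q_! \quad \text{in } \Fun^\lax_\calA(\Fixcolax_T(\mcal{N}), \mcal{N}).\]

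It thus suffices to identify $\fib \circ \pi_!$ with $i^*$ via $i^*(u_q)$. By \cref{const:defn-of-section-for-colaxfix} the section $i_!$ sends $Y \in \mcal{N}$ to the pair $(Y, 0 \colon Y \to TY)$. Unwinding the pullback definition of $\Fixcolax_T(\mcal{N})$, a morphism $(Y,0) \to (X,\varphi)$ is the data of a map $f \colon Y \to X$ in $\mcal{N}$ together with a commuting square in $\mcal{N}^{\Delta^1}$ having top $f$, bottom $Tf$, and verticals $0$ and $\varphi$ --- equivalently, a null-homotopy of $\varphi \circ f$. This yields a natural equivalence
\[\Map_{\Fixcolax_T(\mcal{N})}(i_! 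Y, (X,\varphi)) \simeq \Map_\mcal{N}(Y, \fib(\varphi)),\]
so by Yoneda $i^*(X,\varphi) \simeq \fib(\varphi)$, naturally in $(X,\varphi)$.

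Finally I must verify that under this Yoneda identification the map $i^*(u_q) \colon i^* \to q_!$ coincides with the canonical fiber inclusion $\fib(\varphi) \to X$. By definition $i^*(u_q)$ is obtained by applying $i^*$ to the unit $u_q \colon \Id \to q^* q_!$ and invoking the equivalence $i^* q^* \simeq \Id_\mcal{N}$ (the right mate of $q_! \circ i_! \simeq \Id_\mcal{N}$); under the mapping-space description above this is the forgetful map $(f,h) \mapsto f$, representing the fiber inclusion. The main delicacy is assembling these pointwise identifications into coherent $\calA$-lax-linear natural transformations. This is handled by passing through the conservative limit-preserving functor to plain functor categories (\cref{obs:finite-limits-of-linear-left-adjoints}), where the whole comparison is determined by the pointwise data already described, making it a routine diagram chase through the adjunction and the universal property of the pullback defining $\Fixcolax_T(\mcal{N})$.
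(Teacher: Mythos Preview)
Your approach is genuinely different from the paper's, and more direct in spirit. The paper proves the proposition by an explicit diagram chase: it uses adjoint descent on the defining pullback square for $\Fixcolax_T(\mcal{N})$ together with the computation of $j^\ast$ from \cref{lem:right-adjoint-from-arrow-category} to build a cartesian square relating $i^\ast$ to $\ev_1\pi_!$, and then extracts the fiber sequence from that. Your argument instead observes $\gamma_\can = \can(\pi_!)$, so $\fib(\gamma_\can) = \fib\circ\pi_!$, and then identifies $\fib\circ\pi_!$ with $i^\ast$ by a Yoneda computation of mapping spaces out of $i_!Y$. The pointwise identification $i^\ast(X,\varphi)\simeq\fib(\varphi)$ is correct and is in fact exactly the formula recorded in \cref{prop:fixcolax-stable-setting} as a \emph{consequence} of this proposition.

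There is, however, a genuine gap in your final paragraph. Conservativity of the forgetful functor $\Fun^\lax_\calA\to\Fun$ lets you \emph{detect} that a given morphism is an equivalence, but it does not \emph{construct} morphisms or null-homotopies for you. Your Yoneda argument produces an equivalence $i^\ast\simeq\fib\circ\pi_!$ at the level of underlying functors, but the proposition asserts a fiber sequence in $\Fun^\lax_\calA$ with a \emph{specified} first map $i^\ast(u_q)$. To make your argument complete you must either (a) construct the null-homotopy of $\gamma_\can\circ i^\ast(u_q)$ as a $2$-cell in $\Fun^\lax_\calA$ and then invoke conservativity, or (b) argue that your mapping-space computation exhibits $\fib\circ\pi_!$ as a right adjoint to the $\calA$-linear functor $i_!$ \emph{internally} to $\enRMod^\lax_\calA(\CatI)$, so that uniqueness of adjoints in that $(\infty,2)$-category transports the lax-linear structure; and then separately verify, again inside $\Fun^\lax_\calA$, that the fiber-inclusion map agrees with $i^\ast(u_q)$ under this identification. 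Neither step is hard, but neither follows from ``passing through the conservative functor'' alone. The paper's more laborious route via adjoint descent is designed precisely so that every map and every null-homotopy is manifestly built from units, counits, and finite limits already living in $\Fun^\lax_\calA$.
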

\begin{proof}
	Consider the following natural diagram in $\Fun^\lax_\calA(\Fixcolax_T(\mcal{N}),\mcal{N})$:
	\[\begin{tikzcd}
		{i^{\ast}} & {i^{\ast}\pi^{\ast}\pi_!} & {j^{\ast} \pi_!} & {\ev_1\pi_!} \\
		{i^{\ast}q^{\ast} q_!} & {i^{\ast}q^{\ast}q_! \oplus q^{\ast}T^RT q_!} & {(\ev_0 \oplus T^R \ev_1 )\pi_!} & {(\ev_1 \oplus \ev_1)\pi_!}
		\arrow[from=1-1, to=2-1]
		\arrow[from=1-1, to=1-2]
		\arrow[from=1-2, to=2-2]
		\arrow[from=2-1, to=2-2]
		\arrow["\simeq", from=2-2, to=2-3]
		\arrow["\simeq", from=1-2, to=1-3]
		\arrow[from=2-3, to=2-4]
		\arrow[from=1-3, to=2-3]
		\arrow[from=1-4, to=2-4]
		\arrow[from=1-3, to=1-4]
	\end{tikzcd}\]
	where the left most square is obtained by letting the right adjoint 
	$i^{\ast}$ act on the adjoint descent diagram of the pullback square (\hyperref[diag:lax-fix-pullback-linear]{$\star$}), the right most square is obtained by precomposing \cref{lem:right-adjoint-from-arrow-category} with $\pi_!$, and the middle square is induced from the equivalences:
	\[j_! \simeq \pi_!  i_! \qquad q_!  i_! \simeq \Id_{\mcal{N}} \qquad q_! \simeq \ev_0   \pi_!  \qquad  T q_! \simeq  \ev_1  \pi_! \]
    The left and right most squares are cartesian and therefore so is the outer rectangle.
	The cartesian outer rectangle sits as the top left square in the following diagram:
	\[\begin{tikzcd}
		{i^{\ast}} & {\ev_1\pi_!} & {Tq_!} \\
		{i^{\ast}q^{\ast} q_!} & {(\ev_1 \oplus \ev_1)\pi_!} \\
		{q_!} && {Tq_! \oplus Tq_!}
		\arrow["{i^{\ast}(u_q)}"', from=1-1, to=2-1]
		\arrow[from=1-2, to=2-2]
		\arrow[from=2-1, to=2-2]
		\arrow[from=1-1, to=1-2]
		\arrow["\simeq", from=1-2, to=1-3]
		\arrow["\simeq"', from=2-1, to=3-1]
		\arrow["\simeq", from=2-2, to=3-3]
		\arrow["{\gamma_\can \oplus 0}", from=3-1, to=3-3]
		\arrow["\Delta", from=1-3, to=3-3]
		\arrow["\lrcorner"{anchor=center, pos=0.125}, draw=none, from=1-1, to=2-2]
	\end{tikzcd}\]
    We see that the outer rectangle is a pullback square.
    The desired fiber sequence is now easily extracted.
\end{proof}

\begin{prop}\label{prop:fixcolax-stable-setting}
    Colax fixed points defines a functor
    \[\Fixcolax_{(-)}(\mcal{N}) \colon \enEnd^\mrm{L}_{\calA}(\mcal{N}) \too \left(\enRMod_{\calA}(\PrLst)^{\ladj}_{\mcal{N}/}\right)_{/\Id_\mcal{N}}, \qquad T \longmapsto \left( \mcal{N} \xrightarrow{i_!}  \Fixcolax_T(\mcal{N}) \xrightarrow{q_!} \mcal{N} \right)\]
    The internal right adjoint of $i_! \colon \mcal{N} \to \Fixcolax_T(\mcal{N})$ is given by 
    \[i^\ast \colon \Fixcolax_T(\mcal{N}) \too \mcal{N}, \qquad \left(X,\varphi\right) \longmapsto \fib\left(\varphi \colon X \to T(X)\right)\]
\end{prop}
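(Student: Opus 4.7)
The plan is to reduce the functoriality to the abstract construction already carried out in \cref{defn:colaxfix-pointed}, and then extract the formula for $i^*$ from the fiber sequence established in \cref{prop:cofib-seq-laxfix}. By \cref{ex:modules-over-pointed-are-good} the \twocategory{} $\frX \coloneqq \enRMod_\calA(\PrLst)$ is \good{}, so applying \cref{defn:colaxfix-pointed} with $X = \mcal{N}$ supplies immediately a functor
\[
\Fixcolax_{(-)}(\mcal{N}) : \enEnd^{\mrm{L}}_\calA(\mcal{N}) \too \left(\enRMod_\calA(\PrLst)_{\mcal{N}/}\right)_{/\id_\mcal{N}},
\]
sending $T$ to the span $\mcal{N} \xrightarrow{i_!} \Fixcolax_T(\mcal{N}) \xrightarrow{q_!} \mcal{N}$ constructed in \cref{notation:notation-for-colaxfix} and \cref{const:defn-of-section-for-colaxfix}. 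The remaining tasks are thus to refine the target to the $\ladj$ sub-\twocategory{} and to identify the right adjoint $i^*$.

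For the formula for $i^*$, I would simply evaluate the fiber sequence $i^{\ast} \to q_! \xrightarrow{\gamma_\can} Tq_!$ of \cref{prop:cofib-seq-laxfix} on an object $(X,\varphi) \in \Fixcolax_T(\mcal{N})$. Under the identifications $q_!(X,\varphi) \simeq X$ and $\gamma_\can(X,\varphi) \simeq \varphi$ this yields $i^{\ast}(X,\varphi) \simeq \fib(X \xrightarrow{\varphi} T(X))$. Although the fiber sequence of \cref{prop:cofib-seq-laxfix} lives a priori only in $\Fun^\lax_\calA$, the forgetful functor $\Fun^\lax_\calA \to \Fun$ is conservative and preserves finite limits, so the identification passes to underlying functors, which is all that the formula requires.

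To lift the construction to the $\ladj$ sub-\twocategory{} one must verify that $i_!$, $q_!$, and the functor induced by a natural transformation $\tau \colon T \to T'$ all admit $\calA$-linear right adjoints. The existence of right adjoints at the level of $\PrL$ is automatic by the adjoint functor theorem, so the substantive content is strict $\calA$-linearity as opposed to merely $\calA$-lax-linearity. For $i^\ast$ this is transparent from the formula $\fib(\varphi)$: since $\mcal{N}$ is a stable presentable $\calA$-module, the action of $\calA$ is biexact and hence commutes with fibers. For $q^\ast$ and the right adjoint of $\tau_\ast$ one may use adjoint descent applied to the pullback square defining $\Fixcolax_T(\mcal{N})$, together with the fact that $\ev_0^R, \ev_1^R \colon \mcal{N} \to \mcal{N}^{\Delta^1}$ are $\calA$-linear (they send $X$ to $(X \to 0)$ and $(0 \to X)$, which are $\calA$-linear because $0$ is). I expect the main bookkeeping obstacle to be precisely this passage between lax and strict $\calA$-linearity, which is ultimately facilitated by \cref{obs:finite-limits-of-linear-left-adjoints} identifying $\Fun^{\mrm{L}}_\calA \hookrightarrow \Fun^\lax_\calA$ as creating finite limits.
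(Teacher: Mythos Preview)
Your argument tracks the paper's closely: invoke \cref{defn:colaxfix-pointed} for the functoriality, extract $i^\ast(X,\varphi)\simeq\fib(\varphi)$ from the fiber sequence of \cref{prop:cofib-seq-laxfix}, and conclude via \cref{obs:finite-limits-of-linear-left-adjoints} that $i^\ast$ is colimit-preserving and strictly $\calA$-linear, so that $i_!$ is an internal left adjoint. The one structural difference is that the paper applies \cref{defn:colaxfix-pointed} in the larger ambient $(\infty,2)$-category $\frX=\RModlax_\calA(\CatI)$ and then argues separately (using that the inclusion into $\RModlax_\calA(\CatI)$ preserves limits and cotensors, together with \cref{lem: formula-for-colax-fixpoints}) that the construction lands in $\enRMod_\calA(\PrLst)$, whereas you work in $\frX=\enRMod_\calA(\PrLst)$ from the outset.

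Your additional sketch for $q^\ast$ and for the $\tau$-induced right adjoints does not go through, however. Adjoint descent computes the right adjoint of the comparison map \emph{into} a limit, not of a projection \emph{out} of one, and $q_!$ is of the latter kind. Indeed $q^\ast$ is a cofree-type construction---already for $T=\Id_{\mcal{N}}$ one finds $q^\ast(Y)\simeq\bigl(\prod_{n\ge0}Y,\ \text{shift}\bigr)$---hence is not colimit-preserving in general, so $q_!$ need not lie in $\enRMod_\calA(\PrLst)^\ladj$. The paper's own proof checks only $i_!$ and does not explicitly address $q_!$ or the functoriality in $\tau$; your instinct that these deserve attention is sound, but the route you propose does not succeed. (Minor slip: $\ev_1^R(X)\simeq(X\xrightarrow{=}X)$, not $(0\to X)$; the latter is $\ev_1^L(X)$.)
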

\begin{proof}
    $\enEnd_{\calA}(\mcal{N})$ 
    can be identified with the full subcategory of 
    $\Fun^\lax_\calA(\mcal{N},\mcal{N})$
    spanned by the colimit preserving (strongly) $\calA$-linear functors. 
    The results of the previous section applied to 
    $\frX \coloneqq  \RModlax_\calA(\CatI)$ and $X\coloneqq \mcal{N}$
    supply a functor:
    \[\Fixcolax_{(-)}(\mcal{N}) \colon \enEnd_{\calA}(\mcal{N}) \too (\RModlax_\calA(\CatI)_{\mcal{N}/})_{/\Id_\mcal{N}}\]
    We must show that this functor lands in
    $\left(\enRMod_\calA(\PrLst)^\ladj_{\mcal{N}/}\right)_{\Id_\mcal{N}/} \subseteq (\RModlax_\calA(\CatI)_{\mcal{N}/})_{/\Id_\mcal{N}}$.
    The inclusion functor
    $\enRMod_\calA(\PrLst) \to \RModlax_\calA(\CatI)$ 
    preserves limits and cotensors
    and thus by \cref{lem: formula-for-colax-fixpoints} we have
    $\FIXcolax(T \colon \mcal{N} \to \mcal{N}) \in \enRMod_\calA(\PrLst)$.
    To conclude we must show that the functor 
    $i_! \colon \mcal{N} \to \Fixcolax_T(\mcal{N})$ 
    is internally left adjoint, i.e. that its right adjoint $i^\ast$ is colimit preserving and $\calA$-linear.
    By \cref{prop:cofib-seq-laxfix} we have $i^\ast \simeq \fib(q_! \to Tq_!)$ where $q_!$ and $Tq_!$ are clearly colimit preserving and $\calA$-linear, and thus the claim follows from \cref{obs:finite-limits-of-linear-left-adjoints}
\end{proof}

\begin{const}\label{obs:shift-functors}
    Consider the following diagram in $\RMod_\calA(\PrLst)$:
    \[\begin{tikzcd}
	{\Fixcolax_T(\mcal{N})} &&& {\mcal{N}^{\Delta^1}} \\
	{\mcal{N}} &&& {\mcal{N}^{\partial \Delta^1}}
	\arrow["{(Tq_! \xrightarrow{T(\gamma_\can)} T^2q_!)}", from=1-1, to=1-4]
	\arrow["{Tq_!}"', from=1-1, to=2-1]
	\arrow["{(\Id,T)}", from=2-1, to=2-4]
	\arrow["{(\ev_0,\ev_1)}", from=1-4, to=2-4]
    \end{tikzcd}\]
    The canonical map to the pullback defines a shift functor: 
    \[\sigma \colon \Fixcolax_T(\mcal{N}) \to \Fixcolax_T(\mcal{N}), \qquad \sigma \colon (X,\varphi) \mapsto (T(X),T(\varphi))\]
    There is also a canonical natural transformation $\gamma_\can \colon \id \to \sigma$ which evaluates on $(X,\varphi)$ to the tautologically commuting square:
    \[\begin{tikzcd}
	X && {T(X)} \\
	{T(X)} && {T^2(X)}
	\arrow["\varphi", from=1-1, to=1-3]
	\arrow["\varphi"', from=1-1, to=2-1]
	\arrow["{T(\varphi)}", from=1-3, to=2-3]
	\arrow["{T(\varphi)}", from=2-1, to=2-3]
    \end{tikzcd}\]
    The colimit $\sigma^\infty\coloneqq \colim_n \sigma^n \colon \Fixcolax_T(\mcal{N}) \to \Fixcolax_T(\mcal{N})$ 
    is an idempotent functor since $\sigma$ preserves colimits.
    Observe now that $(X,\varphi) \simeq \sigma^\infty(X,\varphi)$ if and only if $(X,\varphi)\simeq \sigma(X,\varphi)$ if and only if $\varphi \colon X \to T(X)$ is an equivalence.
    We conclude that $\sigma^\infty$ reflects onto the full subcategory $\Fix_T(\mcal{N}) \subseteq \Fixcolax_T(\mcal{N})$.
\end{const}

\subsection{Exterior algebras as monadic approximations}

In this subsection we finally state and prove \cref{thm:fixcolax-categorifies-exterior}, which establishes the promised relation between split square zero extensions and colax fixed points.

\begin{defn}
    Let $\calC$ be a stable presentably monoidal \category{}. 
    We define the \hl{\textit{augmentation ideal}} functor as:
    \[ \hl{\overline{(-)}} \colon \Alg^\aug(\calC) \too \calC, \qquad \left(\unit \to R \to \unit \right) \longmapsto \hl{\overline{R}} \coloneqq  \cofib(\unit \to R) \simeq \fib(R \to \unit)\]
    By \cite[Theorem 7.3.4.7]{HA}, this functor induces an equivalence 
    $\overline{(-)} \colon \Sp(\Alg^{\aug}(\calC)) \simeq \Sp(\calC) \simeq \calC$.
    We define the \hl{\textit{split square zero extension}} functor as the composite:
    \[\hl{\Lambda(-)} \coloneqq \unit_\calC \ltimes (-) \colon \calC \simeq \Sp(\Alg^\aug(\calC)) \xrightarrow{\Omega^\infty} \Alg^\aug(\calC)\] 
\end{defn}

\begin{rem}
    Our choice of the abbreviation $\Lambda(-)$ for the split extension functor $\unit_\calC \ltimes (-)$ is motivated by the fact that split square zero extensions can be thought of as generalizations of exterior algebras. 
\end{rem}

\begin{const}\label{const:monad-from-adjunction}
    Given $\frX \in \Cat_{(\infty,2)}$ and $X \in \frX$ we construct a functor
    \[ \mnd(-) \colon \left(\frX^\ladj_{X/}\right)_{/\id_X} \too \Alg^\aug(\enEnd_\frX(X)), \qquad \left(f_! \colon X \adj Y \colon f^\ast \right) \longmapsto \mnd(f_! \dashv f^\ast) \coloneqq  f^\ast f_!\]
    Consider the commuting square of $(\infty,2)$-categories
    \[\begin{tikzcd}
	{\Delta^{\{0\}}} & \frmnd \\
	{\Delta^1} & \fradj
	\arrow[from=2-1, to=2-2]
	\arrow[from=1-1, to=2-1]
	\arrow[from=1-2, to=2-2]
	\arrow[from=1-1, to=1-2]
    \end{tikzcd}\]
    where the bottom map chooses the left adjoint. Hitting this square with $\FUN(-,\frX)_\colax$ produces the following square:
    \[\begin{tikzcd}
    {\FUN(\fradj,\frX)_\colax} & {\FUN(\frmnd,\frX)_\colax} \\
    {\FUN(\Delta^1,\frX)_\colax} & \frX
    \arrow[from=1-1, to=2-1]
    \arrow[from=1-1, to=1-2]
    \arrow["{\ev_0}", from=2-1, to=2-2]
    \arrow[from=1-2, to=2-2]
    \end{tikzcd}\]
    By \cite[Corollary 4.8]{RuneMonad}, the left vertical map induces an equivalence $\FUN(\fradj,\frX)_\colax \iso \FUN(\Delta^1,\frX)_\ladj$ where $\FUN(\Delta^1,\frX)_\ladj \subseteq \FUN(\Delta^1,\frX)$ denotes the full sub $(\infty,2)$-category on the left adjoint morphisms.
    Passing to the fiber at $X \in \frX$ and using the equivalence of \cite[Corollary 8.9]{RuneMonad} we get a functor
    \[\frX^\ladj_{X/} \to \FUN(\frmnd,\frX)_{\colax,X} \iso \Alg(\enEnd_\frX(X)). \]
    Slicing over $\id_X \colon X \to X \in \frX^\ladj_{X/}$ gives the desired functor.
\end{const}

\begin{prop}\label{thm:fixcolax-categorifies-exterior}
    There exists a canonically commuting square of \categories{}:
    \[\begin{tikzcd}
	{\enEnd^\mrm{L}_\calA(\mcal{N})} &&& {\left(\enRMod_{\calA}(\PrLst)^{\ladj}_{\mcal{N}/}\right)_{/\Id_\mcal{N}}} \\
	{\enEnd^\mrm{L}_\calA(\mcal{N})} &&& {\Alg^\aug(\enEnd^\mrm{L}_\calA(\mcal{N}))}
	\arrow["{\Sigma^{-1}}"', from=1-1, to=2-1]
	\arrow["{\Fixcolax_{(-)}(\mcal{N})}", from=1-1, to=1-4]
	\arrow["{\Lambda(-)}"', from=2-1, to=2-4]
	\arrow["{\mnd(-)}", from=1-4, to=2-4]
    \end{tikzcd}\]
\end{prop}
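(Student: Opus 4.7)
The aim is to construct, naturally in $T$, an equivalence of augmented $\bbE_1$-algebras $\mnd(\Fixcolax_T(\mcal{N})) \simeq \Lambda(\Sigma^{-1}T)$ in $\calE \coloneqq \enEnd^\mrm{L}_\calA(\mcal{N})$, and to assemble these pointwise equivalences into the commuting square asserted by the statement.

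The first step is to compute the underlying functor $\mnd(\Fixcolax_T(\mcal{N})) = i^{\ast} i_!$ in $\calE$. By \cref{prop:cofib-seq-laxfix} there is a fiber sequence of $\calA$-lax-linear functors
\[ i^{\ast} \too q_! \overset{\gamma_\can}{\too} T q_! \]
from $\Fixcolax_T(\mcal{N})$ to $\mcal{N}$. Precomposing with $i_!$ and using both the canonical equivalence $q_! i_! \simeq \Id_\mcal{N}$ and the observation that the structure map of $i_!(X) = (X, 0)$ is zero (so that $\gamma_\can(i_!) \simeq 0 \colon \Id_\mcal{N} \to T$), gives a fiber sequence $i^{\ast} i_! \to \Id_\mcal{N} \to T$ in $\calE$. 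Consequently $i^{\ast} i_! \simeq \Id_\mcal{N} \oplus \Sigma^{-1} T$ as an object of $\calE$, and the natural augmentation implicit in the slice $(\enRMod_\calA(\PrLst)^\ladj_{\mcal{N}/})_{/\Id_\mcal{N}}$ is identified with the first-summand projection, matching the augmentation of $\Lambda(\Sigma^{-1} T)$.

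The main step is to upgrade this identification of augmented underlying objects to an equivalence of augmented $\bbE_1$-algebras. My plan is to invoke \cite[Theorem 7.3.4.7]{HA}, which provides an equivalence $\overline{(-)} \colon \Sp(\Alg^\aug(\calE)) \iso \calE$ under which $\Lambda$ corresponds precisely to $\Omega^\infty$. It therefore suffices to exhibit $T \longmapsto \mnd(\Fixcolax_T(\mcal{N}))$ as $\Omega^\infty$ applied to a functor into $\Sp(\Alg^\aug(\calE))$ whose composition with $\overline{(-)}$ recovers $\Sigma^{-1}$. A natural such lift is assembled from the system $\{\mnd(\Fixcolax_{\Sigma^n T}(\mcal{N}))\}_{n \ge 0}$, whose successive loops in $\Alg^\aug(\calE)$ provide the required spectrum structure; the $n=0$ level then produces the desired $\Omega^\infty$-description of $\mnd(\Fixcolax_T(\mcal{N}))$.

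The main obstacle is verifying that this tower indeed forms a spectrum object in $\Alg^\aug(\calE)$, i.e.\ that the loop comparison maps $\mnd(\Fixcolax_{\Sigma^n T}(\mcal{N})) \simeq \Omega\,\mnd(\Fixcolax_{\Sigma^{n+1} T}(\mcal{N}))$ are equivalences of \emph{augmented algebras}, not merely of underlying objects. The subtlety is $(\infty,2)$-categorical: the multiplication on $i^{\ast} i_!$ produced by \cref{const:monad-from-adjunction} arises from whiskering the counit $\epsilon \colon i_! i^{\ast} \to \Id$ and depends on the non-trivial $2$-cell filling the counit square, which is precisely what forces the resulting algebra to be a genuine split square zero \emph{extension} of $\Id$ by $\Sigma^{-1}T$ rather than a direct product algebra. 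Navigating this coherence inside $\enRMod_\calA(\PrLst) \subseteq \enRMod^\lax_\calA(\CatI)$ requires care but uses only the formalism developed in the preceding subsections; once the spectrum-object compatibilities are established, naturality in $T$ is automatic from the functoriality of $\Fixcolax_{(-)}(\mcal{N})$, of $\mnd(-)$, of the cofiber sequence of \cref{prop:cofib-seq-laxfix}, and of the shift construction of \cref{obs:shift-functors}.
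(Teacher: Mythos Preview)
Your computation of the augmentation ideal $\overline{\mnd(\Fixcolax_T(\mcal{N}))} \simeq \Sigma^{-1}T$ via \cref{prop:cofib-seq-laxfix} is exactly what the paper does. The divergence is in how this identification of underlying augmented objects is promoted to one of augmented $\bbE_1$-algebras.

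The paper does not assemble the spectrum object $\{\mnd(\Fixcolax_{\Sigma^n T}(\mcal{N}))\}_{n \ge 0}$ by hand. It argues instead, under the phrase ``tracing through the definitions'', that it suffices to check the square after postcomposing with the augmentation-ideal functor $\overline{(-)} \colon \Alg^\aug(\calE) \to \calE$. The reason this suffices is that both $\mnd(-) \circ \Fixcolax_{(-)}(\mcal{N})$ and $\Lambda \circ \Sigma^{-1}$ are reduced finite-limit-preserving functors out of the stable \category{} $\calE$, and the universal property of stabilization identifies such functors into $\Alg^\aug(\calE)$, via $\Omega^\infty$, with exact functors $\calE \to \Sp(\Alg^\aug(\calE)) \simeq \calE$; postcomposition with $\overline{(-)}$ reads off precisely that exact endofunctor. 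The left-exactness itself is routine: $\FIXcolax$ is a right adjoint by definition, the lift to the double slice in \cref{defn:colaxfix-pointed} creates finite limits, and $\mnd(-)$ is built from restrictions along fixed $(\infty,2)$-functors.

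Your approach is not wrong, but the ``spectrum-object compatibilities'' you defer are exactly what left-exactness delivers for free: the comparison map $\mnd(\Fixcolax_{\Sigma^n T}(\mcal{N})) \to \Omega\,\mnd(\Fixcolax_{\Sigma^{n+1} T}(\mcal{N}))$ in $\Alg^\aug(\calE)$ arises by applying the composite functor to the pullback square $\Sigma^n T \simeq 0 \times_{\Sigma^{n+1}T} 0$ in $\calE$, and it is an equivalence precisely because that composite preserves this pullback. So your route collapses to the paper's the moment you say \emph{why} those compatibilities hold; verifying them instead through explicit $2$-cell manipulations, as you suggest, would be both harder and unnecessary.
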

\begin{proof}   
    The top horizontal and right vertical functors are provided by \cref{prop:fixcolax-stable-setting} and \cref{const:monad-from-adjunction} respectively.
    Tracing through the definitions we see that it suffices to show that the composite functor
    \[ 
    \enEnd^\mrm{L}_\calA(\mcal{N}) \xrightarrow{\Fixcolax_{(-)}(\mcal{N})}  \left(\enRMod_{\calA}(\PrLst)^{\ladj}_{\mcal{N}/}\right)_{/\Id_\mcal{N}} \xrightarrow{\mnd(-)} \Alg^\aug(\enEnd^\mrm{L}_\calA(\mcal{N})) \xrightarrow{\overline{(-)}} \enEnd^\mrm{L}_\calA(\mcal{N})\]
    is equivalent to $\Sigma^{-1} \colon \enEnd^\mrm{L}_\calA(\mcal{N}) \to \enEnd^\mrm{L}_\calA(\mcal{N})$.
    To see this, we precompose the cofiber sequence of \cref{prop:cofib-seq-laxfix} with $i_!$ to deduce
    \[ 
    \overline{\mnd}(i_! \colon \mcal{N} \adj \Fixcolax_T(\mcal{N}) \colon i^\ast)= \fib(i^\ast i_! \to \Id_\mcal{N}) \simeq \Sigma^{-1}T.\qedhere
    \]
\end{proof}

\begin{lem}\label{lem:action-on-functors}
	Let $\mcal{U} \in \RMod_{\mcal{A}}(\PrLst)$, 
	let $\mcal{V} \in \BMod{\mcal{E}}{\mcal{A}}(\PrLst)$,
	and let $E \in \Alg(\mcal{E})$. 
	There exists a canonical equivalence 
	\[	\Fun^\mrm{L}_\calA(\mcal{U},\LMod_E(\mcal{V})) \simeq \LMod_E(\Fun^\mrm{L}_\calA(\mcal{U},\mcal{V})) \]
\end{lem}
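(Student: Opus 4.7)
My plan is to realize both sides of the claimed equivalence via the same monadic limit and then invoke that $\Fun^\mrm{L}_\mcal{A}(\mcal{U},-)$ is a right adjoint preserving this limit.

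First, I would verify that both sides are well-defined. Because $\mcal{V}$ is an $(\mcal{E},\mcal{A})$-bimodule, the left $\mcal{E}$-action on $\mcal{V}$ is through colimit-preserving $\mcal{A}$-linear endofunctors; post-composition therefore equips $\Fun^\mrm{L}_\mcal{A}(\mcal{U},\mcal{V})$ with a canonical left $\mcal{E}$-action, making $\LMod_E(\Fun^\mrm{L}_\mcal{A}(\mcal{U},\mcal{V}))$ meaningful. Symmetrically, the right $\mcal{A}$-action on $\mcal{V}$ commutes with the left $\mcal{E}$-action and so descends along the forgetful functor to a right $\mcal{A}$-action on $\LMod_E(\mcal{V})$, making $\Fun^\mrm{L}_\mcal{A}(\mcal{U},\LMod_E(\mcal{V}))$ meaningful.

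Next I would recall that the forgetful functor $U \colon \LMod_E(\mcal{V}) \to \mcal{V}$ is monadic in $\RMod_\mcal{A}(\PrLst)$ with monad $T = E \otimes_\mcal{E} (-)$, so that $\LMod_E(\mcal{V})$ is the totalization in $\RMod_\mcal{A}(\PrLst)$ of the associated cobar cosimplicial diagram $T^{\bullet+1}(\mcal{V})$. The functor $\Fun^\mrm{L}_\mcal{A}(\mcal{U},-) \colon \RMod_\mcal{A}(\PrLst) \to \PrLst$ is a right adjoint---its left adjoint is $(-)\otimes \mcal{U}$, as one verifies by unwinding the universal property of $\mcal{W} \otimes \mcal{U}$ in $\PrLst$ together with the tensor--hom adjunction---and therefore commutes with this totalization.

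Finally, I would identify the image of the cobar termwise with the cobar of $\Fun^\mrm{L}_\mcal{A}(\mcal{U},\mcal{V})$: functoriality of $\Fun^\mrm{L}_\mcal{A}(\mcal{U},-)$ in endomorphisms of its source turns post-composition with the $\mcal{A}$-linear endofunctor $T \colon \mcal{V} \to \mcal{V}$ into the endofunctor of $\Fun^\mrm{L}_\mcal{A}(\mcal{U},\mcal{V})$ induced by the inherited $\mcal{E}$-action of $E$. The main obstacle is upgrading this pointwise identification to a coherent cosimplicial equivalence. This reduces to the observation that $\Fun^\mrm{L}_\mcal{A}(\mcal{U},-)$ extends to an $(\infty,2)$-functor $\enRMod_\mcal{A}(\PrLst) \to \CatI$, so that the algebra structure of $E$---viewed as an object of $\Alg(\enEnd^\mrm{L}_\mcal{A}(\mcal{V}))$ through the monoidal functor $\mcal{E} \to \enEnd^\mrm{L}_\mcal{A}(\mcal{V})$ corresponding to the left $\mcal{E}$-action---transports coherently along the monoidal functor $\enEnd^\mrm{L}_\mcal{A}(\mcal{V}) \to \enEnd^\mrm{L}(\Fun^\mrm{L}_\mcal{A}(\mcal{U},\mcal{V}))$ to an algebra structure on the induced endofunctor of $\Fun^\mrm{L}_\mcal{A}(\mcal{U},\mcal{V})$. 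Identifying the resulting totalization with $\LMod_E(\Fun^\mrm{L}_\mcal{A}(\mcal{U},\mcal{V}))$ concludes the proof.
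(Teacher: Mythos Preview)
Your strategy is workable in spirit, but it takes a considerably longer route than the paper, and one of your steps is not quite right as written.

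The paper's proof is a four-line chain of equivalences, all formal. The key input is \cite[Theorem 4.8.4.1]{HA}, which identifies $\LMod_E(\mcal{V}) \simeq \Fun_{\LMod_{\mcal{E}}(\PrLst)}(\RMod_E(\mcal{E}),\mcal{V})$ as a corepresentable functor in the $\mcal{E}$-module variable. One then applies $\Fun_{\RMod_{\mcal{A}}(\PrLst)}(\mcal{U},-)$, uses the tensor--hom adjunction to swap $\mcal{U}$ and $\RMod_E(\mcal{E})$ across the bimodule $\mcal{V}$, and applies the same corepresentability on the other side. No monadicity, no cobar, no coherence issues.

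The problematic step in your outline is the claim that monadicity of $U$ exhibits $\LMod_E(\mcal{V})$ as the totalization of ``the cobar cosimplicial diagram $T^{\bullet+1}(\mcal{V})$''. Monadicity tells you that $\LMod_E(\mcal{V})$ is the category of algebras for the monad $T$, but this is not the same as being a totalization of a cosimplicial diagram of copies of $\mcal{V}$; the notation $T^{\bullet+1}(\mcal{V})$ does not have an obvious meaning as a diagram of \emph{categories}. (There is a comonadic description available here---since $U$ is also a left adjoint in $\PrLst$, one could run Barr--Beck on the right adjoint instead---but then the relevant comonad is coinduction, not $E \otimes_{\mcal{E}}(-)$, and the identification of the totalization with the other side becomes less transparent.)

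Your final paragraph, transporting the algebra structure along the monoidal functor $\enEnd^\mrm{L}_\mcal{A}(\mcal{V}) \to \enEnd^\mrm{L}(\Fun^\mrm{L}_\mcal{A}(\mcal{U},\mcal{V}))$, is on firmer ground and could be developed into a correct argument on its own. But at that point you are essentially reproving the corepresentability statement from \cite{HA} by hand; it is far simpler to cite it and run the tensor--hom swap directly.
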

\begin{proof}    
    By \cite[Theorem 4.8.4.1]{HA} we have
	\begin{align*}
		\Fun_{\RMod_{\mcal{A}}(\PrLst)}(\mcal{U},\LMod_E(\mcal{V})) 
		& \simeq \Fun_{\RMod_{\mcal{A}}(\PrLst)}(\mcal{U},\Fun_{\LMod_{\mcal{E}}(\PrLst)}(\RMod_E(\mcal{E}),\mcal{V})) \\
		&\simeq \Fun_{\BMod{\mcal{E}}{\mcal{A}}(\PrLst)}(\mcal{U} \otimes \RMod_E(\mcal{E}),\mcal{V})\\
		& \simeq \Fun_{\LMod_{\mcal{E}}(\PrLst)}(\RMod_E(\mcal{E}),\Fun_{\RMod_{\mcal{A}}(\PrLst)}(\mcal{U},\mcal{V}))\\
		& \simeq \LMod_E(\Fun_{\RMod_{\mcal{A}}(\PrLst)}(\mcal{U}.\mcal{V})).\qedhere
	\end{align*}
\end{proof}

\begin{defn}\label{defn:internally-coreflective}
    An adjunction $f_! \colon X \adj Y \colon f^\ast$ in an \twocategory{} $\frX$ is called \textit{\hl{reflective}} 
    if the counit map $c_f \colon f_!f^\ast \to \id_Y$ is an equivalence. 
    It is called \textit{\hl{coreflective}} if the unit $\id \colon \id_X \to f^\ast f_!$ is an equivalence. 
    We will often use the term \textit{\hl{internally (co-)reflective adjunction}} to emphasize that the adjunction is happening in $\frX$ rather than in some larger ambient \twocategory{}.
\end{defn}

\begin{lem}\label{lem:recollement-from-adjunction}
    Let $i_! \colon \calR \adj \calC \colon i^\ast$ be an internally coreflective adjunction in $\enRMod_\calA(\PrLst)$. 
    Then:
    \begin{enumerate}
        \item 
        The full subcategory $\calL \coloneqq  \ker(i^\ast) \subseteq \calC$ is a presentable right $\calA$-module and the inclusion extends to an internally reflective adjunction in $\enRMod_\calA(\PrLst)$:
        \[\begin{tikzcd}
    	{ j^! \colon \calC} && {\calL \colon j_!}
    	\arrow[""{name=0, anchor=center, inner sep=0}, shift left=2, from=1-1, to=1-3]
    	\arrow[""{name=1, anchor=center, inner sep=0}, shift left=2, hook', from=1-3, to=1-1]
    	\arrow["\dashv"{anchor=center, rotate=-90}, draw=none, from=0, to=1]
        \end{tikzcd}\]
        \item 
        We have a canonical recollement of stable \categories{}:
        \[\begin{tikzcd}
    	\calL && \calC && \calR
    	\arrow["{i^\ast}"{description}, from=1-3, to=1-5]
    	\arrow["{i_\ast}"{description}, shift right=4, hook', from=1-5, to=1-3]
    	\arrow["{j_!}"{description}, hook, from=1-1, to=1-3]
    	\arrow["{j^\ast}"{description}, shift right=4, from=1-3, to=1-1]
    	\arrow["{j^!}"{description}, shift left=4, from=1-3, to=1-1]
    	\arrow["{i_!}"{description}, shift left=4, hook', from=1-5, to=1-3]
        \end{tikzcd}\]
        \item 
        The following square in $\RMod_\calA(\PrLst)$
        \[\begin{tikzcd}
    	\calR & \calC \\
    	0 & \calL
    	\arrow["{i_!}", hook, from=1-1, to=1-2]
    	\arrow["{j^!}", from=1-2, to=2-2]
    	\arrow[from=1-1, to=2-1]
    	\arrow[from=2-1, to=2-2]
        \end{tikzcd}\]
        is both a pushout and a pullback.
    \end{enumerate}
\end{lem}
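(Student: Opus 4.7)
The plan is to construct $j^!$ as the cofiber of the counit of $i_! \dashv i^\ast$, derive the reflective adjunction $j^! \dashv j_!$ and then the full recollement from the resulting fiber sequence $i_! i^\ast \to \id_\calC \to j_! j^!$, and finally verify the bicartesian square via universal properties in $\PrL$.

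For (1), I would define $j^! \colon \calC \to \calL$ by first taking the cofiber of the counit $c_i \colon i_! i^\ast \to \id_\calC$ as an endofunctor of $\calC$, then factoring through $\calL = \ker(i^\ast)$: applying $i^\ast$ to the fiber sequence and using the unit equivalence $\id_\calR \simeq i^\ast i_!$ shows the cofiber is annihilated by $i^\ast$. Since the adjunction $i_! \dashv i^\ast$ is internal to $\enRMod_\calA(\PrLst)$, both $i_!$ and $i^\ast$ are colimit-preserving and $\calA$-linear, hence so is the cofiber, making $j^!$ a morphism in $\enRMod_\calA(\PrLst)$. The subcategory $\calL$ is closed under colimits and under the $\calA$-action in $\calC$, so the inclusion $j_! \colon \calL \hookrightarrow \calC$ likewise lies in $\enRMod_\calA(\PrLst)$. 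The adjunction $j^! \dashv j_!$ then follows by applying $\Map_\calC(-, j_! L)$ to the fiber sequence $i_! i^\ast C \to C \to j_! j^! C$: the outer term $\Map_\calC(i_! i^\ast C, j_! L) \simeq \Map_\calR(i^\ast C, i^\ast j_! L)$ vanishes since $j_! L \in \ker(i^\ast)$. Full-faithfulness of $j_!$ follows because $i_! i^\ast j_! L \simeq 0$ collapses the analogous fiber sequence for $j_! L$.

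For (2), the adjoint functor theorem in $\PrL$ furnishes right adjoints $i_\ast$ of $i^\ast$ and $j^\ast$ of $j_!$ (neither need be $\calA$-linear, but this is not required for the recollement). Full-faithfulness of $i_\ast$, i.e.\ $i^\ast i_\ast \simeq \id_\calR$, follows from the short Yoneda computation $\Map_\calR(R', i^\ast i_\ast R) \simeq \Map_\calC(i_! R', i_\ast R) \simeq \Map_\calR(i^\ast i_! R', R) \simeq \Map_\calR(R', R)$, using the unit equivalence. The vanishing conditions $i^\ast j_! = 0$ (by definition of $\calL$) and $j^! i_! = 0$ (by applying the fiber sequence to $i_! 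R$ and using $i^\ast i_! \simeq \id$) are immediate, and the remaining recollement identities such as $j^\ast i_\ast = 0$ follow by adjunction.

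For (3), limits in $\RMod_\calA(\PrLst)$ are created by the forgetful functor to $\CatI$, so the pullback of $0 \to \calL \leftarrow \calC$ is the full subcategory $\ker(j^!) \subseteq \calC$; by construction of $j^!$, this equals the essential image of $i_!$, which is $\calR$ as $i_!$ is fully faithful. Dually, any $\calA$-linear left adjoint $F \colon \calC \to \mcal{M}$ with $F i_! \simeq 0$ satisfies $F \simeq F j_! j^!$ upon applying $F$ to the fiber sequence $i_! i^\ast C \to C \to j_! j^! C$, so $F$ factors uniquely through $j^!$, verifying the pushout universal property. The main subtlety throughout is the bookkeeping between internal adjunctions in $\enRMod_\calA(\PrLst)$ (where both adjoints are $\calA$-linear and colimit-preserving) and the external right adjoints $i_\ast, j^\ast$ produced by the adjoint functor theorem in $\PrL$; once one observes that the recollement data only require these external adjoints as morphisms in $\PrL$, the argument reduces to standard stable-categorical manipulations with the fiber sequence $i_! i^\ast \to \id \to j_! j^!$.
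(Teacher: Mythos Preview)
Your proof is correct and arrives at the same conclusions, but by a genuinely different route from the paper's argument. The paper proceeds in the opposite order: it first establishes part (2) by invoking an external result (a theorem from \cite{reflect}) guaranteeing that the inclusion $j_! \colon \ker(i^\ast) \hookrightarrow \calC$ admits adjoints on both sides whenever $i^\ast$ preserves limits and colimits, and then cites \cite[Proposition A.8.20]{HA} to conclude the recollement. Part (1) is then deduced from (2) together with the observation that $\ker(i^\ast)$ is closed under both tensors and cotensors, which forces the relevant adjoints to be $\calA$-linear. For (3), the paper simply appeals to the general fact that limits in $\RMod_\calA(\PrLst)$ are computed in $\CatI$ and colimits are computed by passing to right adjoints.

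By contrast, you build everything from scratch out of the fiber sequence $i_! i^\ast \to \id_\calC \to j_! j^!$: you define $j^!$ explicitly as the cofiber of the counit, read off its $\calA$-linearity and colimit-preservation from those of $i_!$ and $i^\ast$, and then verify the recollement axioms and the (co)cartesian square by hand. Your approach is more elementary and self-contained, avoiding the black-box citations, at the cost of more explicit bookkeeping; the paper's approach is terser but relies on having those references at hand. Both are perfectly valid, and your explicit construction of $j^!$ as $\cofib(i_! i^\ast \to \id)$ is in some ways more informative than the bare existence statement the paper imports.
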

\begin{proof}
    $(1)$
    The kernel of the internal right adjoint $i^\ast$ is closed under both tensors and cotensors, hence the inclusion
    $j_! \colon \calL=\ker(i^\ast) \to \calC$ 
    preserves both tensors and cotensors.
    It follows that both $j_!$ and $j^!$ are $\calA$-linear. The rest follows from $(2)$.
    
    $(2)$
    Since $i^\ast$ preserves limits and colimits it follows from \cite[Theorem 1.1]{reflect} that the inclusion $j_! \colon \calL \coloneqq  \ker(i^\ast) \hookrightarrow \calC$ admits adjoints on both sides
    $j^\ast \dashv j_! \dashv j^!$.
    By \cite[Proposition A.8.20.]{HA} it follows that the inclusions 
    $\calL=\im(j_!) \hookrightarrow \calC$ and 
    $\calR=\im(i_\ast) \hookrightarrow \calC$ 
    exhibit $\calC$ as a recollement.
    $(3)$ 
    Recall that limits in $\PrLst$ are computed levelwise and colimits in $\PrLst$ are computed as limits of the right adjoints.
    It follows that the forgetful $\RMod_\calA(\PrLst) \to \PrLst$ creates and preserves both limits and colimits.
    The claim now follow by combining $(1)$ and $(2)$.
\end{proof}

\begin{prop}\label{prop:enriched-barr-beck}
	Let $f^{\ast} : \mcal{U} \to \mcal{V}$ be an internal right adjoint in  $\enRMod_{\mcal{A}}(\PrLst)$.
	Then $f^{\ast}$ admits a canonical lift to an internally coreflective adjunction in $\enRMod_\calA(\PrLst)$:
	\[\begin{tikzcd}
	{\tild{f}_! \colon \LMod_{\mnd(f_!\dashv f^\ast)}(\mcal{V})} && {\calU \colon \tild{f}^\ast}
	\arrow[""{name=0, anchor=center, inner sep=0}, shift left=2, hook, from=1-1, to=1-3]
	\arrow[""{name=1, anchor=center, inner sep=0}, shift left=2, from=1-3, to=1-1]
	\arrow["\dashv"{anchor=center, rotate=-90}, draw=none, from=0, to=1]
    \end{tikzcd}\]
\end{prop}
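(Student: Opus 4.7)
The plan is to construct $\tild{f}^*$ by applying \cref{lem:action-on-functors} to encode the canonical $T$-action on $f^*$, to produce the left adjoint $\tild{f}_!$ via the adjoint functor theorem, and finally to verify coreflectivity by resolving every $T$-module as a realization of free modules. The main work lies in the coreflectivity step.

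Set $T \coloneqq \mnd(f_! \dashv f^*) = f^*f_! \in \Alg(\enEnd^\mrm{L}_\calA(\calV))$. The counit $c_f \colon f_!f^* \to \id_\calU$ endows $f^* \in \Fun^\mrm{L}_\calA(\calU,\calV)$ with a canonical $T$-module structure. Applying \cref{lem:action-on-functors} with $\calE = \enEnd^\mrm{L}_\calA(\calV)$ converts this $T$-module into an $\calA$-linear colimit-preserving functor
\[ \tild{f}^* \colon \calU \too \LMod_T(\calV) \]
lifting $f^*$ in the sense that $U_T \circ \tild{f}^* \simeq f^*$, where $U_T \colon \LMod_T(\calV) \to \calV$ denotes the forgetful functor; unwinding the construction one further finds $\tild{f}^* \circ f_! \simeq F_T$, where $F_T$ is the free $T$-module functor.

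Next I would construct the left adjoint $\tild{f}_!$. Since $T$ preserves colimits, $U_T$ creates them, so $\LMod_T(\calV) \in \PrLst$, and $\tild{f}^*$ preserves all colimits (because $U_T \circ \tild{f}^* = f^*$ does and $U_T$ is conservative). The adjoint functor theorem then supplies a colimit-preserving left adjoint $\tild{f}_! \colon \LMod_T(\calV) \to \calU$ in $\PrL$. A standard mate argument, using that $\tild{f}^*$ preserves cotensors by $\calA$ (a consequence of strong $\calA$-linearity combined with limit preservation), upgrades $\tild{f}_!$ to a strongly $\calA$-linear functor, so the adjunction $\tild{f}_! \dashv \tild{f}^*$ lives inside $\enRMod_\calA(\PrLst)$.

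Finally, for coreflectivity: passing to left adjoints in $\tild{f}^* \circ f_! \simeq F_T$ yields $\tild{f}_! \circ F_T \simeq f_!$, so the unit $\eta_{F_T V} \colon F_T V \to \tild{f}^* \tild{f}_! F_T V \simeq \tild{f}^* f_! V \simeq F_T V$ is an equivalence for every $V \in \calV$. For an arbitrary $M \in \LMod_T(\calV)$, I would resolve $M$ as the geometric realization of the canonical monadic bar construction by free $T$-modules, which becomes split after applying $U_T$; both $\tild{f}_!$ (as a left adjoint) and $\tild{f}^*$ (by the colimit preservation established above) preserve this realization, so $\eta_M$ arises by realizing unit maps on free $T$-modules---each an equivalence---and is therefore itself an equivalence. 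The main obstacle is ensuring that $\tild{f}^*$ preserves the bar realization; this depends on the colimit preservation of $\tild{f}^*$, which ultimately rests on the monad $T = f^*f_!$ being colimit-preserving, a consequence of both $f_!$ and $f^*$ being morphisms in $\PrL$.
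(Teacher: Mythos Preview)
Your approach matches the paper's: construct $\tild{f}^*$ via \cref{lem:action-on-functors}, produce the left adjoint, argue $\calA$-linearity of $\tild{f}_!$ via cotensor preservation of $\tild{f}^*$, and verify coreflectivity. The paper replaces your bar-resolution argument for full faithfulness by a direct citation of \cite[Theorem 4.7.3.5]{HA}; your expansion is correct and is essentially the content of that theorem in this setting.

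One step is not properly justified. You assert that $\tild{f}^*$ preserves cotensors as ``a consequence of strong $\calA$-linearity combined with limit preservation''. But for an adjunction $F \dashv G$ with $G$ strongly $\calA$-linear, cotensor preservation of $G$ is \emph{equivalent} (via the very mate argument you invoke next) to $F$ being strongly $\calA$-linear---so your parenthetical amounts to asserting, without proof, a general principle that is at least as strong as what you are trying to establish, and it is not clear this holds for an arbitrary stable presentably monoidal $\calA$. The paper instead exploits the factorization $U_T \circ \tild{f}^* \simeq f^*$: since $f^*$ is by hypothesis an internal right adjoint in $\enRMod_\calA(\PrLst)$ it preserves cotensors, and since $U_T$ is a conservative internal right adjoint it creates cotensors; hence $\tild{f}^*$ preserves cotensors. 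With this correction your argument is complete.
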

\begin{proof}
    Applying \cref{lem:action-on-functors} to the canonical 
    action of the algebra $\mnd(f_!\dashv f^\ast) \in \Alg(\enEnd^\mrm{L}_\calA(\mcal{V}))$ on the right adjoint $f^{\ast} \in \Fun^\mrm{L}_\calA(\calU,\calV)$ yields an $\mcal{A}$-linear factorization:
    \[f^\ast \colon \calU \xrightarrow{\tild{f}^\ast}\LMod_{\mnd(f_!\dashv f^\ast)}(\calV) \xrightarrow{\forget} \calV \]
	The composite $f^\ast$ preserves colimits, hence by \cite[Theorem 4.7.3.5]{HA}, $\tild{f}^\ast$ admits a (necessarily $\calA$-oplax-linear) fully faithful left adjoint which we denote by $\tild{f}_! \colon \LMod_{\mnd(f_!\dashv f^\ast)}(\calV) \hookrightarrow \calU$.
	We must show that $\tild{f}_!$ is in fact $\calA$-linear.
	Since $f^\ast$ is an internal right adjoint it preserves cotensors.
	The forgetful functor $\LMod_{\mnd(f_!\dashv f^\ast)}(\mcal{V}) \to \mcal{V}$ is a conservative internal right adjoint, hence creates and preserves cotensors. 
	It follows that $\tild{f}^\ast$ also preserve cotensors, and thus, its left adjoint $\tild{f}_!$ is $\calA$-linear.
\end{proof}

\begin{thm}\label{cor:bifiber-seq-general-case}
    The adjunction 
    $i_! \colon \mcal{N} \adj \Fixcolax_T(\mcal{N}) \colon i^\ast$
    from \cref{const:defn-of-section-for-colaxfix} lifts canonically to an internally coreflective adjunction in $\enRMod_\calA(\PrLst)$:
    \[\begin{tikzcd}
	{\tild{i}_!\colon \LMod_{\Lambda(\Sigma^{-1}T)}\left(\mcal{N} \right)} && { \Fixcolax_{T}\left(\mcal{N}\right) \colon \tild{i}^\ast}
	\arrow[""{name=0, anchor=center, inner sep=0}, shift left=2, hook, from=1-1, to=1-3]
	\arrow[""{name=1, anchor=center, inner sep=0}, shift left=2, from=1-3, to=1-1]
	\arrow["\dashv"{anchor=center, rotate=-90}, draw=none, from=0, to=1]
    \end{tikzcd}\]
    The essential image of $\tild{i}_!$ consists of pairs $(X,\varphi) \in \Fixcolax_T(\mcal{N})$ satisfying:
    \[X[\varphi^{-1}] = \colim\left(X \xrightarrow{\varphi} T(X) \xrightarrow{T(\varphi)} T^2(X) \to \cdots \right)\simeq 0\]
\end{thm}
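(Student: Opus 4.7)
The first step is to apply the enriched monadicity result \cref{prop:enriched-barr-beck} to the internally coreflective adjunction $i_!\dashv i^\ast$ produced by \cref{const:defn-of-section-for-colaxfix} and \cref{prop:fixcolax-stable-setting}. This immediately yields an internally coreflective adjunction
\[
\tild{i}_! \colon \LMod_{\mnd(i_!\dashv i^\ast)}(\mcal{N}) \adj \Fixcolax_T(\mcal{N}) \colon \tild{i}^\ast
\]
in $\enRMod_\calA(\PrLst)$. To get the stated form of the theorem it then remains to identify the monad. This is exactly the content of \cref{thm:fixcolax-categorifies-exterior}, applied to the object $T \in \enEnd^\mrm{L}_\calA(\mcal{N})$: the square there commutes, so $\mnd(i_! \dashv i^\ast) \simeq \Lambda(\Sigma^{-1}T) \in \Alg^\aug(\enEnd^\mrm{L}_\calA(\mcal{N}))$, yielding the desired lift.

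For the essential image, I would feed the coreflective adjunction $\tild{i}_! \dashv \tild{i}^\ast$ into \cref{lem:recollement-from-adjunction}. This produces a recollement of $\Fixcolax_T(\mcal{N})$ by $\calL \coloneqq \ker(\tild{i}^\ast)$ and the essential image $\calR \coloneqq \im(\tild{i}_!)$, and in any recollement the two pieces are mutual complements in the sense $\calR = \ker(j^\ast)$, where $j^\ast \colon \Fixcolax_T(\mcal{N}) \to \calL$ is the left adjoint to the inclusion $j_!\colon \calL \hookrightarrow \Fixcolax_T(\mcal{N})$. Thus the problem reduces to (a) describing $\calL$ explicitly and (b) describing the reflection $j^\ast$ explicitly.

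For (a): the forgetful $\LMod_{\Lambda(\Sigma^{-1}T)}(\mcal{N}) \to \mcal{N}$ is conservative, so $\ker(\tild{i}^\ast) = \ker(i^\ast)$. By the formula $i^\ast(X,\varphi) \simeq \fib(\varphi\colon X \to TX)$ from \cref{prop:fixcolax-stable-setting}, this kernel is precisely the full subcategory of pairs where $\varphi$ is an equivalence, i.e.~$\calL = \Fix_T(\mcal{N})$. For (b): \cref{obs:shift-functors} constructs the idempotent functor $\sigma^\infty$ as a reflection onto $\Fix_T(\mcal{N})$ equipped with a unit $\gamma_\can \colon \id \to \sigma^\infty$; by uniqueness of adjoints this identifies $\sigma^\infty$ with $j^\ast$. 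Since $\sigma^\infty(X,\varphi)$ has underlying object $\colim(X \to TX \to T^2X \to \cdots) = X[\varphi^{-1}]$, an object of $\Fix_T(\mcal{N})$ vanishes iff its underlying object vanishes, so $\calR = \ker(\sigma^\infty)$ is exactly the subcategory $\{(X,\varphi) : X[\varphi^{-1}] \simeq 0\}$, as claimed.

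The conceptual work has already been done in the preceding propositions, so the main point of friction is really step (b): one must be slightly careful that the reflection $j^\ast$ produced abstractly by the recollement agrees with the explicit colimit $\sigma^\infty$ of \cref{obs:shift-functors}. This is not deep, but it is the one place where a small uniqueness-of-adjoints argument is needed, using that both functors are left adjoint to the same inclusion $\Fix_T(\mcal{N}) \hookrightarrow \Fixcolax_T(\mcal{N})$ equipped with compatible units.
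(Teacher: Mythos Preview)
Your proposal is correct and follows essentially the same route as the paper's proof: identify the monad of $i_!\dashv i^\ast$ as $\Lambda(\Sigma^{-1}T)$ via \cref{thm:fixcolax-categorifies-exterior}, apply \cref{prop:enriched-barr-beck} to obtain the coreflective lift, and then read off the essential image from the recollement of \cref{lem:recollement-from-adjunction} together with the identification of the reflection onto $\Fix_T(\mcal{N})$ as $\sigma^\infty$ from \cref{obs:shift-functors}. One small slip: you call the input adjunction $i_!\dashv i^\ast$ ``internally coreflective'', but it is not (indeed $i^\ast i_! \simeq \Lambda(\Sigma^{-1}T) \not\simeq \Id$); fortunately \cref{prop:enriched-barr-beck} only needs $i^\ast$ to be an internal right adjoint, which is what \cref{prop:fixcolax-stable-setting} actually supplies, so the argument goes through unchanged.
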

\begin{proof}
    By \cref{thm:fixcolax-categorifies-exterior},
    $i^\ast \colon \Fixcolax(\mcal{N}) \to \mcal{N}$ 
    is internally right adjoint and
    $\mnd(i_! \dashv i^\ast) \simeq \Lambda(\Sigma^{-1}T) \in \Alg(\End^\mrm{L}_\calA(\mcal{N}))$.
    \cref{prop:enriched-barr-beck} then provides an internally coreflective adjunction in $\enRMod_\calA(\PrLst)$:
    \[\begin{tikzcd}
	{\tild{i}_! \colon \LMod_{\Lambda(\Sigma^{-1}T)}(\mcal{N})} && {\Fixcolax_T(\mcal{N}) \colon \tild{i}^\ast}
	\arrow[""{name=0, anchor=center, inner sep=0}, shift left=2, hook, from=1-1, to=1-3]
	\arrow[""{name=1, anchor=center, inner sep=0}, shift left=2, from=1-3, to=1-1]
	\arrow["\dashv"{anchor=center, rotate=-90}, draw=none, from=0, to=1]
    \end{tikzcd}\]
    It remains to identify the essential image. 
    By \cref{prop:fixcolax-stable-setting} we have 
    $(X,\varphi) \in \ker(i^\ast)$ if and only if $\varphi \colon X \to T(X)$ is an equivalence, i.e. $\ker(i^\ast) = \Fix_T(\mcal{N})$.
    We saw in \cref{obs:shift-functors} that the reflection onto $\Fix_T(\mcal{N})$ is given by $(X,\varphi) \mapsto \sigma^\infty(X,\varphi)$ hence by \cref{lem:recollement-from-adjunction} we have $\im(\tild{i}_!) = \ker(\sigma^\infty)$.
    It follows that $(X,\varphi)$ lies in the essential image if and only if $X[\varphi^{-1}] \simeq 0$, as promised.
\end{proof}

\begin{rem}
    \cref{cor:bifiber-seq-general-case} can be formulated in terms of recollements.
    Namely, the fully faithful functors:
    \[ \mrm{include} \colon \Fix_T(\mcal{N}) \hookrightarrow \Fixcolax_T(\mcal{N}) \qquad \text{ and } \qquad \tild{i}_\ast \colon \LMod_{\Lambda(\Sigma^{-1}T)}(\mcal{N}) \to \Fixcolax_T(\mcal{N})\] 
    exhibit $\Fixcolax_T(\mcal{N})$ as a stable recollement in the sense of \cite[Definition A.8.1]{HA}.
    \cite[Proposition A.8.11]{HA} then provides a pullback square:
    \[\begin{tikzcd}
	{\Fixcolax_T(\mcal{N})} && {\Fix_T(\mcal{N})^{\Delta^1}} \\
	{\LMod_{\Lambda(\Sigma^{-1}T)}(\mcal{N})} && {\Fix_T(\mcal{N})}
	\arrow[from=1-1, to=1-3]
	\arrow["{\ev_1}", from=1-3, to=2-3]
	\arrow["\sigma^{\infty}\circ\tild{i}_{\ast}", from=2-1, to=2-3]
	\arrow[from=1-1, to=2-1]
	\arrow["\lrcorner"{anchor=center, pos=0.125}, draw=none, from=1-1, to=0]
    \end{tikzcd}\]
\end{rem}

\begin{lem}\label{lem:identify-obstruction-map}
    The composite functor 
    $\LMod_{\Lambda(\Sigma^{-1}T)}(\mcal{N}) \xrightarrow{\tild{i}_!} \Fixcolax_{T}(\mcal{N}) \xrightarrow{\pi_!} \Ar(\mcal{N})$
    is represented by the edge map $\Id_\calN \to T$ of the canonical cofiber sequence in $\RMod_{\Lambda(\Sigma^{-1}T)}( \enEnd^\mrm{L}_\calA(\mcal{N}))$:
    \[\Sigma^{-1} T \too\Lambda(\Sigma^{-1}T) \too \Id_\calN\]
\end{lem}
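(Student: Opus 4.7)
The plan is to convert the identification problem into a question about right $\Lambda(\Sigma^{-1}T)$-modules in endofunctors, and then read off the answer from the fiber sequence of \cref{prop:cofib-seq-laxfix} combined with \cref{thm:fixcolax-categorifies-exterior}.

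To begin, I would note that $\pi_! \colon \Fixcolax_T(\mcal{N}) \to \Ar(\mcal{N})$ is the arrow between $q_!$ and $Tq_!$ witnessed by the tautological natural transformation $\gamma_{\can}$. Consequently, the composite $\pi_! \circ \tild{i}_!$ is equivalent to the arrow $q_!\tild{i}_! \to Tq_!\tild{i}_!$ of $\calA$-linear colimit-preserving functors $\LMod_{\Lambda(\Sigma^{-1}T)}(\mcal{N}) \to \mcal{N}$. Precomposing the fiber sequence from \cref{prop:cofib-seq-laxfix} with $\tild{i}_!$ yields a fiber sequence
\[ i^\ast \tild{i}_! \too q_!\tild{i}_! \too Tq_!\tild{i}_! \quad \in \Fun^\mrm{L}_\calA(\LMod_{\Lambda(\Sigma^{-1}T)}(\mcal{N}),\mcal{N}), \]
and fully faithfulness of $\tild{i}_!$ identifies $i^\ast \tild{i}_! \simeq \forget \circ \tild{i}^\ast \circ \tild{i}_! \simeq \forget$.

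Next, I would invoke the right-module analog of \cref{lem:action-on-functors} to produce an equivalence $\Fun^\mrm{L}_\calA(\LMod_{\Lambda(\Sigma^{-1}T)}(\mcal{N}),\mcal{N}) \simeq \RMod_{\Lambda(\Sigma^{-1}T)}(\enEnd^\mrm{L}_\calA(\mcal{N}))$, implemented by precomposition with $\mrm{free} \colon \mcal{N} \to \LMod_{\Lambda(\Sigma^{-1}T)}(\mcal{N})$; under it, $\forget$ manifestly corresponds to $\Lambda(\Sigma^{-1}T)$ as a right module over itself. The fiber sequence thus transports to a fiber sequence of right $\Lambda(\Sigma^{-1}T)$-modules $\Lambda(\Sigma^{-1}T) \to R_1 \to R_2$. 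To pin down $R_1$ and the first map, precompose with $\mrm{free}$ and use $\tild{i}_! \circ \mrm{free} \simeq i_!$ (obtained by taking left adjoints of $\forget \circ \tild{i}^\ast = i^\ast$): the map $\forget \to q_!\tild{i}_!$ restricts to $i^\ast i_! \to q_! i_! \simeq \Id_\calN$, which by construction is the augmentation of the monad $\mnd(i_! \dashv i^\ast)$ obtained from the retraction $q_! i_! \simeq \Id$. By \cref{thm:fixcolax-categorifies-exterior}, this augmented algebra is $\Lambda(\Sigma^{-1}T)$, so the map $\Lambda(\Sigma^{-1}T) \to R_1$ is the augmentation and $R_1 \simeq \Id_\calN$.

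Taking cofibers then forces $R_2$ to be the cofiber of the augmentation, which is $\Sigma \overline{\Lambda(\Sigma^{-1}T)} \simeq T$, and the map $R_1 \to R_2$ is precisely the edge map $\Id_\calN \to T$ of the canonical cofiber sequence $\Sigma^{-1}T \to \Lambda(\Sigma^{-1}T) \to \Id_\calN$. This is the claimed representation of $\pi_! \circ \tild{i}_!$. The main obstacle I anticipate is the bookkeeping in the second paragraph: stating and proving the right-module variant of \cref{lem:action-on-functors}, and then checking that the correspondence is compatible with the algebra structure identified in \cref{thm:fixcolax-categorifies-exterior}, so that "augmentation after restriction along $\mrm{free}$" really does encode the augmentation of $\Lambda(\Sigma^{-1}T)$ as a map of right modules. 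The rest is formal manipulation of the fiber sequence.
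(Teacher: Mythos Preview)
Your proposal is correct and follows essentially the same route as the paper: both arguments precompose the fiber sequence of \cref{prop:cofib-seq-laxfix} with $\tild{i}_!$, identify the first map with the augmentation $\Lambda(\Sigma^{-1}T)\to\Id_\calN$ (using \cref{thm:fixcolax-categorifies-exterior}), and read off the second map as the edge map. The only organizational difference is that you make the Morita-type equivalence $\Fun^\mrm{L}_\calA(\LMod_{\Lambda(\Sigma^{-1}T)}(\mcal{N}),\mcal{N})\simeq\RMod_{\Lambda(\Sigma^{-1}T)}(\enEnd^\mrm{L}_\calA(\mcal{N}))$ explicit and argue via precomposition with $\mrm{free}$, whereas the paper sidesteps this by directly matching $\delta(\tild{i}_!)$ with the transformation $\lambda\colon\nu^\ast\to\epsilon_!$ through a diagram of equivalences built from $i^\ast\simeq\nu^\ast\tild{i}^\ast$, $\tild{i}^\ast\tild{i}_!\simeq\Id$, and $q_!\tild{i}_!\simeq\epsilon_!$; the bookkeeping obstacle you flag is precisely what that diagram is doing.
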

\begin{proof}
    The canonical equivalence
    \[\Fun^\mrm{L}_\calA\left(\Fixcolax_T(\mcal{N}) ,\Ar\left(\mcal{N}\right)\right)\simeq \Ar\left(\Fun^\mrm{L}_\calA(\Fixcolax_T(\mcal{N}) ,\mcal{N})\right)\] 
	identifies the functor 
	$\pi_! \colon  \Fixcolax_T(\mcal{N}) \to \Ar\left(\mcal{N}\right)$ 
	with 
	$\gamma_\can : q_! \to T q_!$.	
	By \cref{prop:cofib-seq-laxfix}, we have a cofiber sequence 
	$i^{\ast} \xrightarrow{\delta} q_! \xrightarrow{\gamma_\can} T q_!$ 
	where $\delta : i^{\ast} \xrightarrow{i^{\ast}(u_q)} i^{\ast}q^{\ast}q_! \simeq q_!$. 
	Meanwhile, the augmentation 
	$\epsilon \colon \Lambda(\Sigma^{-1}T) \to \Id_\calN$
	considered as a natural transformation in $\Ar\left(\Fun^\mrm{L}_\calA(\LMod_{\Lambda(\Sigma^{-1}T)}(\mcal{N}) ,\mcal{N})\right)$
	corresponds to 
	$\lambda \colon \nu^\ast \xrightarrow{\nu^{\ast}(u_\epsilon)} \nu^{\ast} \epsilon^{\ast} \epsilon_! \simeq \epsilon_!$ 
	where we write $\nu \colon \Id_\calN \to\Lambda(\Sigma^{-1}T)$ for the unit of the algebra.
	We must therefore show that 
	$\lambda \colon \nu^\ast \to \epsilon_!$ 
	is equivalent to
	$\delta (\tild{i}_!) \colon  q_! \tild{i}_! \to T q_! \tild{i}_!$
    which can be seen from the following diagram:
	\[\begin{tikzcd}
		& {i^{\ast}\tild{i}_!} & {i^{\ast}\underbracket{q^{\ast}q_!}\tild{i}_!} && {q_! \tild{i}_!} \\
		{\nu^{\ast}} & {\nu^{\ast}\underbracket{\tild{i}^{\ast} \tild{i}_!}} & {\nu^{\ast}\tild{i}^{\ast}\underbracket{q^{\ast}q_!}\tild{i}_!} & {\nu^{\ast}\epsilon^{\ast}\epsilon_!} & {\epsilon_!}
		\arrow["\simeq", from=2-1, to=2-2]
		\arrow["\simeq"', from=1-2, to=2-2]
		\arrow["\simeq", from=2-2, to=2-3]
		\arrow[from=1-2, to=1-3]
		\arrow["\simeq", from=1-3, to=1-5]
		\arrow["\simeq"', from=1-5, to=2-5]
		\arrow["\simeq", from=2-4, to=2-5]
		\arrow["\simeq", from=2-3, to=2-4]
		\arrow["\simeq"', from=1-3, to=2-3]
		\arrow["{\nu^{\ast}(u_\epsilon)}"', curve={height=24pt}, from=2-1, to=2-4]
		\arrow["\delta(\tild{i}_!)", curve={height=-18pt}, from=1-2, to=1-5]
	\end{tikzcd}\]
\end{proof}

\section{The general case}\label{section:3}

The main goal of this section is to state and prove \cref{thm:sqz-general-case} - the generalization of \cref{thm:spectra-sqz-modules} promised in the introduction.
The zoo of subtle null homotopies appearing in the formulation and proof of \cref{thm:sqz-general-case} requires delicate care.
To alleviate some of the confusion we choose to begin this section with a systematic analysis of certain abstract generalization of \cref{defn:null-category-sqz}. 
We then specialize to the setting of modules over square zero extensions where we use the results of \cref{section:2} to prove \cref{thm:sqz-general-case}.
Finally, we explain how to deduce \cref{thm:sqz-connective-variant} using simple connectivity arguments.

\subsection{Generalities on null homotopies}

\begin{defn}\label{defn:null-category}
    A \textit{\hl{null datum}} is a triple $(\calD,E,\alpha)$ consisting of stable presentable \category{} 
	$\calD$, a left adjoint endomorphism
	$E \colon \calD \to \calD$ 
	and a natural transformation
	$\alpha \colon \Id \to E$.
	Given a null datum $(\calD,E,\alpha)$
	we define
	$\Null_{\alpha}(\mcal{D})$ 
	as the pullback:
		\[\begin{tikzcd}
			{\hl{\Null_{\alpha}(\mcal{D})}} & {\calD^{\partial \Delta^1}} \\
			{\calD} & {\calD^{\Delta^1}}
			\arrow["(-) \xrightarrow{0} (-)", from=1-2, to=2-2]
			\arrow[from=1-1, to=2-1]
			\arrow[from=1-1, to=1-2]
			\arrow[""{name=0, anchor=center, inner sep=0}, "{\Id \xrightarrow{\alpha} E}", from=2-1, to=2-2]
			\arrow["\lrcorner"{anchor=center, pos=0.125}, draw=none, from=1-1, to=0]
		\end{tikzcd}\]
	We write objects of $\Null_{\alpha}(\calD)$ as pairs $(X,h)$ where $X \in \calD$ and $h: \alpha_X \simeq 0 \in \Map_{\calD}(X,T(X))$.
\end{defn}

\begin{example}
    In the introduction we showed how to construct from a square zero datum $(A,I,\eta)$ in $\Sp$ a null datum $(\LMod_A,\Sigma^2 I \otimes (-), \theta_\eta)$.
    Substituting this null datum in \cref{defn:null-category}
    recovers
    \cref{defn:null-category-sqz} from the introduction. 
\end{example}

The forgetful functor $U \colon \Null_\alpha(\calD) \to \calD$ is conservative, hence for any $X \in \calD$, the fiber $\hl{\Null(\alpha_X)} \coloneqq  \{X\} \times_{\calD} \Null_\alpha(\calD)$ is an $\infty$-groupoid.
Specifically, $\Null(\alpha_X)$ is the space of null homotopies of the map $\alpha_X \colon X \to E(X)$.
Just as we explained in the introduction, $\Null(\alpha_X)$ is naturally a torsor under $\Map_\calD(X,\Sigma^{-1}E(X))$ and thus given $h_1, h_2 \in \Null(\alpha_X)$ 
we have a well defined difference
$h_2-h_1 \coloneqq  h_1^{-1}\circ h_2 \in \Map_\calD(X, \Sigma^{-1} E(X))$.

\begin{obs}\label{const:functor-to-null}
    A morphism
    $\calC \too \Null_\alpha(\calD)$ in $\xPrL{}{\st}$
    consists of a pair $(f_!,\gamma)$ where $\hl{f_!} \colon  \calC \too \calD$ is a left adjoint functor and $\hl{\gamma} \colon 0 \simeq \alpha(f_!)$ is a null homotopy of $\alpha(f_!)\colon f_! \too E f_!$.
    Conversely, every such a pair defines a unique left adjoint functor denoted
    $\hl{\Phi(f_!,\gamma)} \colon \calC \too \Null_\alpha(\calD)$.
\end{obs}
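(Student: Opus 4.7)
The plan is to directly unpack the universal property of the pullback defining $\Null_\alpha(\calD)$. Since the forgetful functor $\xPrL{}{\st} \to \CatI$ creates finite limits and the cotensors $\calD^{\Delta^1}$ and $\calD^{\partial\Delta^1}$ are computed levelwise in $\xPrL{}{\st}$, the defining square can be interpreted as a pullback in $\xPrL{}{\st}$. Consequently, a left adjoint $\calC \to \Null_\alpha(\calD)$ is the data of three pieces: a left adjoint $f_! \colon \calC \to \calD$ (projection to the bottom-left corner), a left adjoint $g \colon \calC \to \calD^{\partial\Delta^1}$ (projection to the top-right corner), and a homotopy in $\Fun^\mrm{L}(\calC, \calD^{\Delta^1})$ identifying the two resulting composites into $\calD^{\Delta^1}$.

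Next I would use the standard equivalence $\calD^{\partial\Delta^1} \simeq \calD \times \calD$ to rewrite $g$ as a pair $(g_0, g_1)$ of left adjoints $\calC \to \calD$. Under this identification the composite $\calC \to \calD^{\partial\Delta^1} \to \calD^{\Delta^1}$ sends $c$ to the zero morphism $g_0(c) \xrightarrow{0} g_1(c)$, while the composite $\calC \to \calD \to \calD^{\Delta^1}$ sends $c$ to $\alpha_{f_!(c)} \colon f_!(c) \to E f_!(c)$. Applying the two endpoint evaluations $\calD^{\Delta^1} \to \calD$ to the compatibility homotopy forces canonical equivalences $g_0 \simeq f_!$ and $g_1 \simeq E \circ f_!$; once these identifications are absorbed, the residual homotopy is precisely a path in $\Map_{\Fun^\mrm{L}(\calC, \calD)}(f_!, E f_!)$ from the zero map to $\alpha(f_!)$, i.e.~a null-homotopy $\gamma \colon 0 \simeq \alpha(f_!)$.

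No step here is genuinely delicate: the entire content of the observation reduces to the universal property of the pullback together with the product decomposition of $\calD^{\partial\Delta^1}$ and the standard source/target/map description of $\calD^{\Delta^1}$. The assignment $(f_!, \gamma) \mapsto \Phi(f_!, \gamma)$ is then simply the inverse of the extraction procedure above, giving the claimed bijection on $\xPrL{}{\st}$-morphisms into $\Null_\alpha(\calD)$.
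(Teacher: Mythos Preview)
Your argument is correct and is precisely the natural unpacking of the pullback universal property; the paper itself offers no proof, treating the statement as an observation. Your explanation of how the endpoint evaluations force $g_0 \simeq f_!$ and $g_1 \simeq E f_!$, leaving only the null-homotopy $\gamma$ as residual data, is exactly the justification implicit in the paper's choice to label this an observation.
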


\begin{example}
    Let $(\calD,E,\alpha)$ be a null datum.
    The forgetful functor 
    $U \colon \Null_\alpha(\calD) \to \calD$ 
    comes with a canonical null homotopy 
    $\hl{h_\can} \colon 0 \simeq \alpha(U) \colon U \to EU$ 
    characterized by the property that $(U,h_\can)$ classifies the identity functor: 
    \[\Phi(U,h_\can) \simeq \Id_{\Null_\alpha(\calD)} \colon \Null_\alpha(\calD) \too \Null_\alpha(\calD)\]
\end{example}

\begin{prop}\label{lem:weird-adjoint-lemma}
    Let $(\calD,E,\alpha)$ be a null datum and let $(f_!,\gamma)$ be a pair, 
    as in \cref{const:functor-to-null}, 
    defining a left adjoint functor  $\Phi(f_!,\gamma) \colon \calC \to \Null_\alpha(\calD) $.
    The right adjoint 
    $\Phi(f_!,\gamma)^R \colon \Null_\alpha(\calD) \too \calC$ 
    is given by
    \[\Phi(f_!,\gamma)^R \simeq \fib\left(\lambda_{(f_!,\gamma)}(U) - f^\ast h_\can \colon f^\ast U \to \Sigma^{-1} f^\ast E U \right)\]
    where 
    $\lambda_{(f_!,\gamma)} \colon 0 \simeq f^\ast \alpha \colon f^\ast \to f^\ast E $ 
    is the null homotopy defined by the following diagram:
    \[\begin{tikzcd}
	{f^\ast} && {f^\ast f_! f^\ast} \\
	\\
	{f^\ast E} && {f^\ast E f_! f^\ast}
	\arrow[""{name=0, anchor=center, inner sep=0}, "{f^\ast \alpha(f_! f^\ast)}"', from=1-3, to=3-3]
	\arrow["{f^\ast \alpha}"', from=1-1, to=3-1]
	\arrow["{u_f(f^\ast)}", from=1-1, to=1-3]
	\arrow["{f^\ast E c_f}", from=3-3, to=3-1]
	\arrow[""{name=1, anchor=center, inner sep=0}, "0", curve={height=-30pt}, from=1-3, to=3-3]
	\arrow["f^\ast\gamma"', shorten <=6pt, shorten >=6pt, Rightarrow, from=1, to=0]
    \end{tikzcd}\]
\end{prop}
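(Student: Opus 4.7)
The plan is to compute $\Phi(f_!,\gamma)^R(X,h)$ directly from its universal property. By Yoneda, it suffices to exhibit, naturally in $C \in \calC$, an equivalence
\[\Map_\calC\!\left(C,\,\fib\bigl(\lambda_{(f_!,\gamma)}(X) - f^\ast h\bigr)\right) \simeq \Map_{\Null_\alpha(\calD)}\bigl(\Phi(f_!,\gamma)(C),(X,h)\bigr),\]
and this I would produce in two stages. The first stage unravels morphism spaces in $\Null_\alpha(\calD)$ using its defining pullback: a morphism $(Y_1,g_1) \to (Y_2,g_2)$ amounts to a map $\phi \colon Y_1 \to Y_2$ in $\calD$ together with a path, in the space of null homotopies of $\alpha_{Y_2} \circ \phi$, between $g_2 \cdot \phi$ and $E\phi \cdot g_1$. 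Because $\calD$ is stable, the torsor structure of $\Null(\alpha_{Y_2}\phi)$ under $\Map_\calD(Y_1,\Sigma^{-1}EY_2)$ promotes this data to a natural fiber sequence
\[\Map_{\Null_\alpha(\calD)}\bigl((Y_1,g_1),(Y_2,g_2)\bigr) \too \Map_\calD(Y_1,Y_2) \xrightarrow{\ d_{g_1,g_2}\ } \Map_\calD(Y_1,\Sigma^{-1}EY_2),\]
with $d_{g_1,g_2}(\phi) = (g_2\cdot\phi) - (E\phi \cdot g_1)$ and total space identified as the fiber at the zero map.

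The second stage specializes to $(Y_1,g_1) = (f_!C,\gamma_C)$, $(Y_2,g_2) = (X,h)$ and transports the fiber sequence across $f_! \dashv f^\ast$. Writing $\phi = c_f(X) \cdot f_!(\psi)$ for the mate of $\psi \colon C \to f^\ast X$, the triangle identity $f^\ast c_f \cdot u_f(f^\ast) \simeq \id_{f^\ast}$ immediately gives $f^\ast(h \cdot \phi) \cdot u_f(C) \simeq f^\ast h \cdot \psi$. For the second summand, naturality of $\gamma$ in $\psi$ together with naturality of the unit $u_f$ and one further use of the triangle identity reproduces verbatim the composite defining $\lambda_{(f_!,\gamma)}$ in the statement, yielding $f^\ast(E\phi \cdot \gamma_C) \cdot u_f(C) \simeq \lambda_{(f_!,\gamma)}(X) \cdot \psi$. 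Consequently, under the adjunction $d_{\gamma_C,h}$ corresponds to postcomposition with $(f^\ast h) - \lambda_{(f_!,\gamma)}(X)$, and passing to fibers (preserved by $\Map_\calC(C,-)$) delivers the required equivalence, modulo the harmless identification $\fib(\mu) \simeq \fib(-\mu)$. Naturality of the entire construction in $(X,h)$ then upgrades the pointwise identification to a functorial one.

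The main obstacle is the coherence verification just sketched, namely identifying $f^\ast(E\phi \cdot \gamma_C) \cdot u_f(C)$ with $\lambda_{(f_!,\gamma)}(X) \cdot \psi$. Carrying this out rigorously requires assembling a sizeable diagram in a $2$-categorical enhancement of the relevant functor category and checking its commutativity: the rectangle in the statement of the proposition serves as the intended guide for the chase, and one must carefully track the intermediate composites through the triangle identities together with the naturality squares for $\alpha$, $\gamma$, and $u_f$ simultaneously.
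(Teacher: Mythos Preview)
Your approach is correct and takes a genuinely different route from the paper. Where you compute mapping spaces directly via Yoneda and then identify the right adjoint pointwise, the paper instead assembles $\Phi(f_!,\gamma)^R$ as a limit of functors using adjoint descent: it first invokes the earlier lemma computing $j_\alpha^\ast$ (the right adjoint out of the arrow category) as a pullback, uses the null homotopy $\gamma$ to split $f^\ast j_\alpha^\ast j_{\alpha,!}$, and then reads off $\Phi(f_!,\gamma)^R$ from the adjoint-descent pullback square attached to the defining diagram of $\Null_\alpha(\calD)$; the two null homotopies $\lambda_{(f_!,\gamma)}(U)$ and $f^\ast h_{\can}$ then appear as the two legs of that square, and subtracting them gives the fiber description. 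Your route is more elementary in that it avoids both the adjoint-descent formalism and the preparatory arrow-category lemma, at the cost of the explicit $2$-categorical diagram chase you flag (and of having to argue naturality in $(X,h)$ separately at the end). The paper's route is more structural and produces the functor-level identification directly, but imports more machinery. Both arguments ultimately pivot on the same coherence --- tracking how $\gamma$ transports across $f_! \dashv f^\ast$ --- just packaged differently.
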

\begin{proof}
    Consider the functor $f^\ast j_\alpha^\ast j_{\alpha,!} \colon \calD \xrightarrow{j_{\alpha,!}} \calD^{\Delta^1} \xrightarrow{j_\alpha^\ast} \calD \xrightarrow{f^\ast} \calC$.
    By the diagram at the end of
    \cref{lem:right-adjoint-from-arrow-category}, we have a pullback square: 
    \[\begin{tikzcd}
	{f^\ast j_\alpha^\ast j_{\alpha,!}} & {f^\ast E^R E} \\
	{f^\ast} & {f^\ast E}
	\arrow["{f^\ast(\alpha^R(E))}", from=1-2, to=2-2]
	\arrow[from=1-1, to=2-1]
	\arrow[from=1-1, to=1-2]
	\arrow["{f^\ast(\alpha)}"', from=2-1, to=2-2]
    \end{tikzcd}\]
    We claim the right vertical natural transformations is canonically null homotopic.
    Indeed $\gamma$ provides such a null homotopy via the following commutative diagram:
    \[\begin{tikzcd}
	{f^\ast E^R E} &&&& {f^\ast f_!f^\ast E^R E} \\
	\\
	{f^\ast E} && {f^\ast E E^R E} && {f^\ast E f_! f^\ast E^R E}
	\arrow[""{name=0, anchor=center, inner sep=0}, "{f^\ast \alpha(f_! f^\ast E^R E )}"', from=1-5, to=3-5]
	\arrow["{f^\ast E c_f(E^R E)}", from=3-5, to=3-3]
	\arrow["{u_f(f^\ast T^R E)}", from=1-1, to=1-5]
	\arrow["{f^\ast c_E(E)}", from=3-3, to=3-1]
	\arrow[""{name=1, anchor=center, inner sep=0}, "0", curve={height=-30pt}, from=1-5, to=3-5]
	\arrow["{f^\ast \alpha^R(E)}", from=1-1, to=3-1]
	\arrow["{\gamma}"', shorten <=6pt, shorten >=6pt, Rightarrow, from=1, to=0]
    \end{tikzcd}\]
    This gives a canonical equivalence 
    $f^\ast j_\alpha^\ast j_{\alpha,!} \simeq \fib(f^\ast \alpha \colon f^\ast \to f^\ast E) \oplus f^\ast E^R E$.
    By adjoint descent the right adjoint 
    $\Phi(f_!,\gamma)^R$ 
    sits in a pullback square:
    \[\begin{tikzcd}
	{\Phi(f_!,\gamma)^R} && {f^\ast U\oplus f^\ast E^R EU} \\
	{f^\ast U} & {f^\ast j^\ast_{\alpha} j_{\alpha,!}U} & {\fib\left(f^\ast \alpha(U) \colon f^\ast U \to f^\ast EU\right) \oplus f^\ast E^R EU}
	\arrow[from=1-1, to=1-3]
	\arrow["{f^\ast u_{j_\alpha} (U)}", from=2-1, to=2-2]
	\arrow[from=1-1, to=2-1]
	\arrow["\simeq", from=2-2, to=2-3]
	\arrow[from=1-3, to=2-3]
    \end{tikzcd}\]
    The right vertical natural map is induced by $f^\ast$ applied to the unit of the adjunction 
    $\calD^{\partial \Delta^1} \to \calD^{\Delta^1}$ 
    whose left and right adjoints are given by 
    $(X,Y) \longmapsto \left(X \xrightarrow{0} Y \right)$ 
    and
    $\left(X \xrightarrow{\gamma_\can} Y\right) \longmapsto \left(\fib(\gamma_\can),Y\right)$ 
    respectively.
    Subtracting $f^\ast E^R E U$ from the right hand side we obtain the following pullback square:
    \[\begin{tikzcd}
	{\Phi(f_!,\gamma)^R} & {f^\ast U } \\
	{f^\ast U} & {\fib\left(f^\ast \alpha(U) \colon f^\ast U \to f^\ast EU\right)}
	\arrow[from=1-2, to=2-2]
	\arrow[from=2-1, to=2-2]
	\arrow[from=1-1, to=1-2]
	\arrow[from=1-1, to=2-1]
    \end{tikzcd}\]
    where the bottom horizontal and right vertical maps correspond to null homotopies of $f^\ast \alpha(U)$. 
    The null homotopy corresponding to the bottom horizontal map is induced by $\gamma$ via the following diagram:
    
    We denote this null homotopy by $\lambda_{(f_!,\gamma)} \colon 0 \simeq f^\ast \alpha \colon f^\ast \to f^\ast E $.
    Meanwhile the null homotopy corresponding to the right vertical map is given by 
    $f^\ast h_\can \colon 0 \simeq f^\ast \alpha(U) \colon f^\ast U \to f^\ast EU$.
    Subtracting them gives the desired pullback square:
     \[\begin{tikzcd}
	{\Phi(f_!,\gamma)^R} &&& {0} \\
	{f^\ast U} &&& {\Sigma^{-1} f^\ast E U}
	\arrow[from=1-1, to=1-4]
	\arrow["{\lambda_{(f_!,\gamma)}(U)-f^\ast (h_\can)}", from=2-1, to=2-4]
	\arrow[from=1-1, to=2-1]
	\arrow[from=1-4, to=2-4]
    \end{tikzcd}\]
\end{proof}

\begin{cor}\label{cor:fiber-sequence-right-adjoint}
    Let $(\calD,E,\alpha)$ be a null datum and let $(f_!,\gamma)$ be a pair, 
    as in \cref{const:functor-to-null}, 
    defining a left adjoint functor  $\Phi(f_!,\gamma) \colon \calC \to \Null_\alpha(\calD) $.
    There is a canonical fiber sequence:
    \[\Phi(f_!,\gamma)^R \too  f^\ast U \too \Sigma^{-1} f^\ast EU\]
\end{cor}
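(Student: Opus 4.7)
The plan is to deduce this directly from \cref{lem:weird-adjoint-lemma}, which does essentially all of the work. That proposition identifies $\Phi(f_!,\gamma)^R$ as the fiber of the specific map
\[
\lambda_{(f_!,\gamma)}(U) - f^\ast h_\can \colon f^\ast U \too \Sigma^{-1} f^\ast EU,
\]
so the only remaining content is to rephrase a fiber as a fiber sequence.

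Concretely, I would simply invoke \cref{lem:weird-adjoint-lemma} and note that in any stable \category{}, an identification $X \simeq \fib(\varphi \colon Y \to Z)$ is equivalent to a fiber sequence $X \to Y \to Z$. Applying this to $\varphi \coloneqq \lambda_{(f_!,\gamma)}(U) - f^\ast h_\can$ yields the required fiber sequence
\[
\Phi(f_!,\gamma)^R \too f^\ast U \xrightarrow{\,\,\lambda_{(f_!,\gamma)}(U) - f^\ast h_\can\,\,} \Sigma^{-1} f^\ast EU.
\]

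There is essentially no obstacle here, since the hard work (tracking the two competing null homotopies $\lambda_{(f_!,\gamma)}(U)$ coming from $\gamma$ and $f^\ast h_\can$ coming from the universal null homotopy over $\Null_\alpha(\calD)$, and computing their difference via adjoint descent) has already been carried out in the proof of \cref{lem:weird-adjoint-lemma}. The corollary is just the packaging of that identification as a fiber sequence, which is the form in which it will subsequently be applied (for instance to produce cofiber sequences relating $\Phi(f_!,\gamma)^R$ to the underlying forgetful functor $U$ and its shifted twist by $E$).
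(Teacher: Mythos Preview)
Your proposal is correct and matches the paper's approach: the corollary is stated without proof, as an immediate consequence of \cref{lem:weird-adjoint-lemma}, and your observation that a fiber identification in a stable \category{} is the same data as a fiber sequence is exactly the (trivial) missing step.
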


\subsection{Modules over square zero extensions}
In this section we prove \cref{thm:sqz-general-case} - the main result of the paper.
We will henceforth fix a stable presentably monoidal \category{} $\calA$.
We begin this section by revisiting the constructions from the introduction in this more general setting.

Recall that given a square zero datum $(A,I,\eta)$ in $\calA$ we defined the associated square zero extension as the pullback in $\Alg(\calA)_{/A}$:
\begin{equation}\label{diag:defn-of-A-eta}
        \begin{tikzcd}
    	{A^\eta} & A \\
    	A & {A \ltimes \Sigma I}
    	\arrow["\eta"', from=2-1, to=2-2]
    	\arrow["p"', from=1-1, to=2-1]
    	\arrow["p", from=1-1, to=1-2]
    	\arrow["{\eta_0}", from=1-2, to=2-2]
    	\arrow["\lrcorner"{anchor=center, pos=0.125}, draw=none, from=1-1, to=2-2]
        \end{tikzcd}\tag{$\star$}
    \end{equation}
The forgetful functors $\Alg(\calA)_{/A} \to \Alg(\calA)$ and $\Alg(\calA)_{A^\eta/} \to \Alg(\calA)$ create and preserve weakly contractible limits, hence we may alternatively regard this square as a pullback in $\Alg(\calA)_{A^\eta/}$.
In particular we may view it as a pullback square of $(A^\eta,A^\eta)$-bimodules. 

\begin{obs}\label{obs:square-bimodules-Aeta}
    The square \hyperref[diag:defn-of-A-eta]{($\star$)} is sent by the forgetful functor
    $\Alg(\calA)_{A^\eta/} \to \BMod{A^\eta}{A^\eta}(\calA)$ to the following pullback square:
    \[\begin{tikzcd}
	{\hl{A^\eta}} & (p,p)^\ast A\\
	(p,p)^\ast A & {(\widetilde{p},\widetilde{p})^\ast A \ltimes \Sigma I}
	\arrow[from=2-1, to=2-2]
	\arrow[from=1-1, to=2-1]
	\arrow[from=1-1, to=1-2]
	\arrow[from=1-2, to=2-2]
	\arrow["\lrcorner"{anchor=center, pos=0.125}, draw=none, from=1-1, to=2-2]
    \end{tikzcd}\]
    where $\hl{\widetilde{p}}\coloneqq  \eta_0 \circ p \simeq \eta \circ p \colon A^\eta \to A \ltimes \Sigma I$.
\end{obs}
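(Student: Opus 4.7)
The plan is to deduce the observation from limit-preservation of the natural forgetful functor
\[ U \colon \Alg(\calA)_{A^\eta/} \too \BMod{A^\eta}{A^\eta}(\calA), \qquad (\phi \colon A^\eta \to R) \longmapsto (\phi,\phi)^\ast R, \]
which encodes every under-algebra as a bimodule over its base via restriction of scalars along the diagonal. Once $U$-preservation of the pullback $(\star)$ is known, the labeled square of bimodules will be literally $U(\star)$, and the observation will reduce to unwinding which map appears at each vertex.

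The first step is to show that $U$ preserves weakly contractible limits. I would factor both $U$ and the forgetful $\BMod{A^\eta}{A^\eta}(\calA) \to \calA$ through the underlying-object functor to $\calA$. The composite $\Alg(\calA)_{A^\eta/} \to \Alg(\calA) \to \calA$ preserves weakly contractible limits by the two facts invoked in the paragraph immediately preceding $(\star)$: slice forgetfuls for algebras create such limits, and the forgetful from $\Alg(\calA)$ to $\calA$ preserves sifted (hence in particular weakly contractible) limits. On the other hand, $\BMod{A^\eta}{A^\eta}(\calA) \to \calA$ preserves \emph{all} limits and is conservative, so a cone in $\BMod{A^\eta}{A^\eta}(\calA)$ is a limit cone if and only if its image in $\calA$ is. Combining these, $U$ preserves weakly contractible limits, and since every pullback is weakly contractible, $U$ sends $(\star)$ to a pullback square.

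The second step is a direct unwinding identifying the vertices of $U(\star)$ with the bimodules in the statement. Viewing $(\star)$ inside $\Alg(\calA)_{A^\eta/}$ equips each vertex with a chosen map from $A^\eta$: the identity on $A^\eta$, the map $p$ on each of the two copies of $A$, and the common composite $\widetilde p = \eta_0 \circ p \simeq \eta \circ p$ on $A \ltimes \Sigma I$. Applying $U$ reads off the bimodules $A^\eta$, $(p,p)^\ast A$, $(p,p)^\ast A$, and $(\widetilde p,\widetilde p)^\ast(A \ltimes \Sigma I)$, with the original morphisms of $(\star)$ realized as bimodule maps.

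The only subtle point is that the equivalence $\eta_0 \circ p \simeq \eta \circ p$, on which the common factorization $\widetilde p$ rests, must be chosen to be the canonical one supplied by the pullback data of $(\star)$; this is automatic from the definition of a pullback in an \category{}, so no genuine obstacle arises, and the proof is little more than bookkeeping about how under-algebras embed into bimodules.
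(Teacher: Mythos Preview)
Your proposal is correct and matches the paper's (implicit) reasoning: the paper states this as an observation without a formal proof, relying only on the sentence just before $(\star)$ that the forgetful functors on under/over-algebra categories create and preserve weakly contractible limits, together with the evident identification of the vertices. One small slip: the forgetful $\Alg(\calA) \to \calA$ preserves \emph{all} limits, not ``sifted'' ones (sifted is a notion for colimits, and in any case pullback diagrams are not sifted); your conclusion is unaffected since the limit in question is certainly preserved.
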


\begin{notation}\label{notation:bimodule-precise}
    Given $R,S \in \Alg(\mcal{A})$
    we have by \cite[Theorem 4.3.2.6 \& 4.3.2.7]{HA} a canonical equivalence: \[\LMod_R(\calA) \otimes_\calA \RMod_S(\calA) \simeq \BMod{R}{S}(\calA)\]
    Consequently, the composite functor
    \[\Alg(\calA) \times \Alg(\calA) \xrightarrow{\left(\LMod_{(-)}(\mcal{A}), \RMod_{(-)}(\mcal{A})\right)} \RMod_\calA(\PrLst) \times \LMod_\calA(\PrLst) \xrightarrow{(-)\otimes_\calA(-)} \PrLst\]
    provides a two-variable functoriality for bimodules:
    \[\hl{\BMod{(-)}{(-)}(\calA)} \colon \Alg(\calA) \times \Alg(\calA) \too \PrLst, \qquad (R,S) \longmapsto \BMod{R}{S}(\calA)\]
    Given a pair of morphism $f \colon R \to S$ and $f' \colon R' \to S'$ we denote the corresponding adjunction by:
    \[ \hl{(f,f')_!} \colon \BMod{R}{S}(\calA) \adj \BMod{R'}{S'}(\calA) \colon \hl{(f,f')^\ast} \]
\end{notation}

\begin{defn}
    Given $A \in \Alg(\calA)$ and $M \in \BMod{A}{A}(\calA)$ we denote by $\hl{\theta^M} \colon A \to \Sigma M$ the edge map of the canonical fiber sequence in $\BMod{A \ltimes M}{A \ltimes M}(\calA)$:
    \[J \too A \ltimes M \too A\]
    We define the \hl{\textit{(left) obstruction map}} of a square zero datum $(A,I,\eta)$ to be the $(A,A)$-bimodule map:
    \[
    \hl{\theta_\eta}\coloneqq (\eta_0,\eta)^\ast(\theta^{\Sigma I}) \colon A \too \Sigma^2 I \in \BMod{A}{A}(\calA) 
    \]
\end{defn}

\begin{const}\label{ex:weird-adjoint-sqz}
    Let $A \in \Alg(\calA)$ and let $I \in \BMod{A}{A}(\calA)$.
    The image of the canonical fiber sequence
    \[ \Sigma I \too A \ltimes (\Sigma I) \too A\quad \in \BMod{A \ltimes \Sigma I}{A \ltimes \Sigma I}\]
    under the forgetful functor $\BMod{A \ltimes \Sigma I}{A \ltimes \Sigma I}(\calA) \too \calA$ admits a canonical splitting.
    Consequently, if $(A,I,\eta)$ is a square zero datum, the image of the associated obstruction map $\theta_\eta \colon A \to \Sigma^2 I \in \BMod{A}{A}(\calA)$ under the forgetful functor $\BMod{A}{A}(\calA) \to \calA$ is canonically null homotopic map.
    We denote this null homotopy by $\hl{\gamma_A}$.
    The pair $(A \otimes (-),\gamma_A)$ 
    defines an $\calA$-linear left adjoint functor (see \cref{const:functor-to-null}):
    \[\Phi(A\otimes (-),\gamma_A) \colon \mcal{A} \too \Null_{\theta_\eta}(\LMod_A(\mcal{A}))\]
    More generally, for $\mcal{M} \in \LMod_\calA(\PrLst)$ this gives a left adjoint functor:
    \[\Phi(A\otimes (-),\gamma_A) \colon \mcal{M} \too \Null_{\theta_\eta}(\LMod_A(\mcal{M}))\]
\end{const}

\begin{defn}\label{defn:beta-definition}
    Let $(A,I,\eta)$ be a square zero datum in $\calA$ and let $\mcal{M} \in \LMod_\calA(\PrLst)$.
    Given $(X,h) \in \Null_{\theta_\eta}(\mcal{M})$
    we define
    \[ \hl{\beta_h} \coloneqq  \gamma_A (X) - \Und(h) \colon \Und(X) \too \Und(\Sigma I \otimes_A X) \quad \in \calM \]
    where $\hl{\Und} \colon \LMod_A(\mcal{M}) \to \mcal{M}$ denotes the forgetful and $\hl{\gamma_A(X)} \colon 0 \simeq \Und(\theta_\eta(X))$ denotes the path in $\Map_{\mcal{M}}(\Und(X),\Und(\Sigma^2 I \otimes_A X))$ defined by the following diagram:
    \[\begin{tikzcd}
	\Und(X) && {\Und(A) \otimes \Und(X)} \\
	\\
	{\Und(\Sigma^2 I\otimes_A X)} && {\Und(\Sigma^2 I) \otimes \Und(X)}
	\arrow[""{name=0, anchor=center, inner sep=0}, "{\Und(\theta_\eta) \otimes \Und(X)}"', shift right=1, from=1-3, to=3-3]
	\arrow["{\Und(\theta_\eta(X))}"', from=1-1, to=3-1]
	\arrow[from=1-1, to=1-3]
	\arrow[from=3-3, to=3-1]
	\arrow[""{name=1, anchor=center, inner sep=0}, "0", curve={height=-30pt}, from=1-3, to=3-3]
	\arrow[shorten <=6pt, shorten >=6pt, Rightarrow, from=1, to=0]
    \end{tikzcd}\]
    where we pick the null homotopy on the right to be $\Und(\gamma_A) \otimes \Und(X) \colon 0 \simeq \Und(\theta_\eta) \otimes \Und(X)$.
\end{defn}

\begin{cor}\label{cor:weird-adjoint-special-case}
    The right adjoint of the functor
    $\Phi(A\otimes (-),\gamma_A) \colon \mcal{M} \to \Null_{\theta_\eta}(\LMod_A(\mcal{M}))$
    from \cref{ex:weird-adjoint-sqz}
    is given on objects by:
    \[\Phi(A \otimes (-),\gamma_A)^R \colon (X,h) \longmapsto \fib\left( \beta_h \colon \Und(X)  \to \Und(\Sigma I \otimes_A X)\right)\]
\end{cor}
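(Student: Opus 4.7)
The corollary is a direct specialization of \cref{lem:weird-adjoint-lemma} to the pair $(f_!,\gamma)=(A\otimes(-),\gamma_A)$, so the plan is to match the abstract formula there with the concrete description of $\beta_h$ from \cref{defn:beta-definition}. The ambient null datum is $(\mcal{D},E,\alpha)=\bigl(\LMod_A(\mcal{M}),\,\Sigma^2 I\otimes_A(-),\,\theta_\eta\otimes_A(-)\bigr)$, and the right adjoint of $f_! = A\otimes(-)$ is the forgetful functor $f^\ast = \Und\colon \LMod_A(\mcal{M})\to\mcal{M}$. Substituting these into \cref{lem:weird-adjoint-lemma} yields
\[
\Phi(A\otimes(-),\gamma_A)^R \simeq \fib\Bigl(\lambda_{(A\otimes(-),\gamma_A)}(U) - \Und\circ h_\can \colon \Und U \to \Und(\Sigma I\otimes_A U)\Bigr),
\]
where $U\colon \Null_{\theta_\eta}(\LMod_A(\mcal{M}))\to\LMod_A(\mcal{M})$ is the forgetful functor and $h_\can\colon 0\simeq \theta_\eta(U)$ is its canonical null homotopy.

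First I would evaluate at a point $(X,h)\in \Null_{\theta_\eta}(\LMod_A(\mcal{M}))$. By construction of the canonical null homotopy $h_\can$, its value at $(X,h)$ is literally $h$, hence $(\Und\circ h_\can)(X,h) = \Und(h)$, which is the second summand appearing in $\beta_h$. This step is formal.

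The substantive step is identifying $\lambda_{(A\otimes(-),\gamma_A)}(U)(X,h)$ with $\gamma_A(X)$ as defined in \cref{defn:beta-definition}. Unwinding the definition of $\lambda_{(f_!,\gamma)}$ in \cref{lem:weird-adjoint-lemma}, it is the null homotopy of $\Und\circ(\theta_\eta\otimes_A(-))$ obtained by the diagram
\[\begin{tikzcd}
{\Und} && {\Und(A\otimes_A\Und)} \\
{\Und(\Sigma^2 I\otimes_A(-))} && {\Und(\Sigma^2 I\otimes_A A\otimes_A \Und)}
\arrow["{u}", from=1-1, to=1-3]
\arrow["{\Und(\theta_\eta\otimes_A(-))}"', from=1-1, to=2-1]
\arrow[""{name=0,anchor=center,inner sep=0}, "{\Und(\theta_\eta\otimes_A A\otimes_A \Und)}"', from=1-3, to=2-3]
\arrow["{\Und(\Sigma^2 I\otimes_A c)}", from=2-3, to=2-1]
\arrow[""{name=1,anchor=center,inner sep=0}, "0", curve={height=-18pt}, from=1-3, to=2-3]
\arrow["{\Und(\gamma_A)\otimes\Und}"', shorten <=4pt, shorten >=4pt, Rightarrow, from=1, to=0]
\end{tikzcd}\]
Tracing through the identifications $\Und(A\otimes_A X)\simeq \Und(A)\otimes \Und(X)$ and $\Und(\Sigma^2 I\otimes_A A\otimes_A X)\simeq \Und(\Sigma^2 I)\otimes\Und(X)$, this diagram collapses precisely to the diagram used to define $\gamma_A(X)$ in \cref{defn:beta-definition}, since the null homotopy $\gamma_A$ there is by construction the one arising from the splitting of $A\ltimes\Sigma I\to A$ on underlying objects. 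So $\lambda_{(A\otimes(-),\gamma_A)}(U)(X,h) = \gamma_A(X)$, naturally in $(X,h)$.

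Combining these two identifications, the difference appearing under the fiber in \cref{lem:weird-adjoint-lemma} becomes $\gamma_A(X)-\Und(h)=\beta_h$, giving
\[
\Phi(A\otimes(-),\gamma_A)^R(X,h) \simeq \fib\bigl(\beta_h\colon \Und(X)\to \Und(\Sigma I\otimes_A X)\bigr),
\]
as claimed. The only genuine obstacle is the diagram chase in the paragraph above, which amounts to carefully matching the formal null homotopy constructed in \cref{lem:weird-adjoint-lemma} with the explicit one built from the canonical splitting of $A\ltimes \Sigma I$; everything else is direct substitution.
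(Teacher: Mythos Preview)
Your proposal is correct and follows exactly the same approach as the paper: apply \cref{lem:weird-adjoint-lemma} to the pair $(A\otimes(-),\gamma_A)$ and then identify $\lambda_{(A\otimes(-),\gamma_A)}(U)_{(X,h)}$ with $\gamma_A(X)$ from \cref{defn:beta-definition}. The paper's proof is terser, leaving the diagram chase as a bare ``observe that'', whereas you have spelled out the unwinding explicitly; both arguments are the same in substance.
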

\begin{proof}
    \cref{lem:weird-adjoint-lemma} provides the following description of the right adjoint:
    \[\Phi(A\otimes (-),\gamma_A)^R(X,h) \simeq \fib \left( \lambda_{(A \otimes (-) ,\gamma_A)}(U)_{(X,h)} - \Und(h) \colon  \Und(X) \too \Und(\Sigma I \otimes_A X)\right) \in \mcal{M}\]
    It remains to observe that 
    $\lambda_{(A \otimes (-) ,\gamma_A)}(U)_{(X,h)} = \gamma_A(X)$
    as introduced in \cref{defn:beta-definition}.
\end{proof}

Some aspects of the proofs of \cref{thm:sqz-general-case} and \cref{thm:sqz-stable-nonsplit-base} are elucidated by the following, seemingly ad-hoc, notion.

\begin{defn}
    A conical diagram of \categories{} and $\calC_{(-)} \colon \calJ^{\triangleleft} \to \Cat_\infty$ is said to be an \hl{\textit{almost limit diagram}} if the comparison functor 
    $\calC_{-\infty} \to \lim_{\alpha \in \calJ} \calC_\alpha$
    is fully faithful. 
    A commutative square which defines an almost limit diagram is called \hl{\textit{almost cartesian}}.
\end{defn}

The main utility of almost limits comes from the following pleasantly surprising property.

\begin{lem}\label{prop:almost-limit-tensoring}
    Let $\calU_{(-)} \colon \calJ^{\triangleleft} \to \RMod_\calA(\PrLst)^\ladj$ 
    be an almost limit diagram such that $\calJ$ is a finite \category{}, 
    and let
    $\calV \in \LMod_\calA(\PrLst)$.
    Then
    $\calU_{(-)} \otimes_\calA \calV \colon \calJ^\triangleleft \to \PrLst$ 
    is an almost limit diagram.
\end{lem}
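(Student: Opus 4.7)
The plan is to factor the canonical comparison functor as
\[ \calU_{-\infty} \otimes_\calA \calV \xrightarrow{p \otimes_\calA \calV} \left(\lim_{\alpha \in \calJ} \calU_\alpha\right) \otimes_\calA \calV \xrightarrow{c} \lim_{\alpha \in \calJ}\left(\calU_\alpha \otimes_\calA \calV\right), \]
where $p \colon \calU_{-\infty} \to \lim_{\alpha} \calU_\alpha$ is the comparison of the given almost limit diagram and $c$ is the natural map assembled from the tensored projections. Since a composite of fully faithful functors is fully faithful, it suffices to verify each factor separately.

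For the first map, I would use that tensoring with $\calV$ over $\calA$ preserves adjunctions: the symmetric monoidal structure on $\PrLst$ is compatible with passage to right adjoints, so if $q$ denotes the (lax $\calA$-linear) right adjoint of $p$, then $q \otimes_\calA \calV$ is right adjoint to $p \otimes_\calA \calV$. The almost limit hypothesis says precisely that the unit $\id \simeq qp$ is an equivalence; tensoring with $\calV$ over $\calA$ then yields
\[ (q \otimes_\calA \calV) \circ (p \otimes_\calA \calV) \simeq (qp) \otimes_\calA \calV \simeq \id, \]
which identifies $p \otimes_\calA \calV$ as a fully faithful $\calA$-linear left adjoint.

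For the second map, the finiteness of $\calJ$ is essential. Reducing the finite limits to finite products and pullbacks, I would handle products using that finite products in $\PrLst$ coincide with finite coproducts, which $\otimes_\calA$ preserves. For the pullback step, I would compute the right adjoint of $c$ via adjoint descent as a specific finite limit built from the right adjoint data $f_\alpha^\ast \otimes_\calA \calV$; the tensor–adjoint compatibility from the first step then lets me identify this with the tensor by $\calV$ of the corresponding limit for the untensored diagram, whose unit is the identity by the almost limit hypothesis.

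The principal obstacle is the pullback case. While $(-) \otimes_\calA \calV$ is only a left adjoint in $\PrLst$ and so does not preserve finite limits in general, the specific finite limits arising from the adjoint descent formula for $c$ are limits of functors assembled from adjunction data, and it is exactly this type of limit that the tensor respects by virtue of its compatibility with passage to right adjoints. Combined with the first step, this is enough to conclude that the composite $c \circ (p \otimes_\calA \calV)$ is fully faithful, proving the almost-limit property of $\calU_{(-)} \otimes_\calA \calV$.
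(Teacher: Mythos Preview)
Your two-step factorization through $(\lim_\alpha \calU_\alpha) \otimes_\calA \calV$ is workable in principle, but Step 1 as written has a gap. You parenthetically note that $q$ is only \emph{lax} $\calA$-linear, yet you then form $q \otimes_\calA \calV$ and declare it the right adjoint of $p \otimes_\calA \calV$. The construction $(-)\otimes_\calA \calV$ is a $2$-functor only on $\enRMod_\calA(\PrLst)$, so for this to make sense you need $q$ to be strongly $\calA$-linear and colimit-preserving. That does in fact hold---by adjoint descent $R_{-\infty}$ is a finite limit assembled from the $R_\alpha$, each of which lies in $\RMod_\calA(\PrLst)$ by the $\ladj$ hypothesis, and such finite limits remain in $\Fun^\mrm{L}_\calA$ by stability---but you have not supplied this verification, and it is exactly the mechanism the paper exploits.

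The paper bypasses the intermediate object entirely and analyzes the full comparison $\calU_{-\infty}\otimes_\calA\calV \to \lim_\alpha(\calU_\alpha\otimes_\calA\calV)$ in one stroke. By adjoint descent its unit is $\lim_\alpha(u_\alpha\otimes_\calA\Id_\calV)$; since each $u_\alpha\colon \Id\to R_\alpha L_\alpha$ lives in the stable category $\enEnd^\mrm{L}_\calA(\calU_{-\infty})$ and $(-)\otimes_\calA\Id_\calV$ preserves finite limits there, this equals $(\lim_\alpha u_\alpha)\otimes_\calA\Id_\calV \simeq u_{-\infty}\otimes_\calA\Id_\calV$, an equivalence by hypothesis. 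Your Step 2, once made precise and stripped of the unnecessary reduction to products and pullbacks, is this same argument with the cone point taken to be the honest limit; so the factorization buys nothing, and the justification missing from Step 1 is the very stability-of-finite-limits observation that already finishes the proof directly.
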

\begin{proof}
    Given $\alpha \in \calJ$ we write 
    $L_\alpha \colon \calU_{-\infty} \adj \calU_{\alpha} \colon R_\alpha$ for the corresponding adjunction and $u_\alpha \colon \Id \to R_\alpha L_\alpha$ for the unit.
    Accordingly, we write
    $L_{-\infty} \colon \calU_{-\infty} \adj \lim_{\alpha \in \calJ} \calU_{\alpha} \colon R_{-\infty}$ for the limit adjunction and denote its unit by 
    $u_{-\infty} \colon \Id \to R_{-\infty} L_{-\infty}$.
    The functor
    $(-)\otimes_\calA \Id_\calV \colon \enEnd_{\enRMod_\calA(\PrLst)}(\calU_{-\infty}) \to  \enEnd_{\PrLst}(\calU_{-\infty} \otimes_\calA \calV)$
    preserves finite limits hence by adjoint descent we have
    \[u_\infty \otimes_\calA \Id_\calV \simeq (\lim_{\alpha \in \calJ}u_\alpha )\otimes_\calA \Id_\calV \simeq \lim_{\alpha \in \calJ} \left(u_\alpha \otimes_\calA \Id_\calV \right) \quad \in \Ar\left(\enEnd_{\PrLst}(\calU_{-\infty} \otimes_\calA \calV)\right)\]
    Since $u_{-\infty}$ is an equivalence by assumption, so is the right most map.
    It remains to observe that by adjoint descent the right most map is precisely the unit of the adjunction in question:
    \[ \lim_{\alpha \in \calJ}\left( L_{\alpha} \otimes_\calA \Id_\calV\right) \colon \calU_{-\infty} \otimes_\calA \calV \adj \lim_{\alpha \in \calJ} \left(\calU_{\alpha} \otimes_\calA \calV\right)\colon \lim_{\alpha \in \calJ}\left( R_{\alpha} \otimes_\calA \Id_\calV\right) \]
\end{proof}

\begin{defn}
    Let $(A,I,\eta)$ be a square zero datum in $\calA$, let $\mcal{M}\in \LMod_\calA(\PrLst)$ and let $(X,h) \in \Null_{\theta_\eta}(\LMod_A(\mcal{M}))$.
    \begin{itemize}
    \item $(X,h)$ is called \hl{\textit{$\beta$-divisible}} if $\beta_h : \Und(X) \to \Und(\Sigma I\otimes_A X)$ is an equivalence.
    \item
    $(X,h)$ is called \hl{\textit{$\beta$-torsion}} if it admits no non-trivial maps into $\beta$-divisible objects. 
    Namely, for any morphism $\psi \colon (X,h) \to (Y,g)$ whose target $(Y,g) \in \Null_{\theta_\eta}(\LMod_A(\calM))$ is $\beta$-divisible we have $\psi \simeq 0$.
    \end{itemize}
\end{defn}

\begin{thm}\label{thm:sqz-general-case}
    Let $(A,I.\eta)$ be a square zero datum in $\calA$ and let $\mcal{M} \in \LMod_\calA(\PrLst)$.
    Then there exists a canonically commuting square in $\PrLst$
    \[\begin{tikzcd}
		{\LMod_{A^{\eta}}(\mcal{M})} &&& {\LMod_{A}(\mcal{M})^{\partial \Delta^1}} \\
		{\LMod_{A}(\mcal{M})} &&& {\LMod_{A}(\mcal{M})^{\Delta^1}}
		\arrow["{A \otimes_{A^{\eta}}(-)}"', from=1-1, to=2-1]
		\arrow["{\theta_{\eta} \otimes_A(-)}", from=2-1, to=2-4]
		\arrow["{(-)\overset{0}{\to}(-)}", from=1-4, to=2-4]
		\arrow["{(A \otimes_{A^{\eta}}(-), \Sigma^2 I \otimes_{A^{\eta}}(-) )}", from=1-1, to=1-4]
	\end{tikzcd}\]
	which induces an internally coreflective adjunction in $\PrLst$:
    \[\begin{tikzcd}
	{\Phi \colon \LMod_{A^\eta}(\mcal{M})} && { \Null_{\theta_\eta}\left(\LMod_{A}(\mcal{M})\right) \colon \Phi^R}
	\arrow[""{name=0, anchor=center, inner sep=0}, shift left=2, hook, from=1-1, to=1-3]
	\arrow[""{name=1, anchor=center, inner sep=0}, shift left=2, from=1-3, to=1-1]
	\arrow["\dashv"{anchor=center, rotate=-90}, draw=none, from=0, to=1]
    \end{tikzcd}\]
    An object $(X,h)$ lies in the essential image of $\Phi$ if and only if it is $\beta$-torsion. 
\end{thm}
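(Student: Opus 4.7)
The plan is to construct the functor $\Phi$ from a natural null homotopy $\gamma_\eta$, identify its right adjoint $\Phi^R$ via \cref{cor:fiber-sequence-right-adjoint}, and read off the essential image using \cref{lem:recollement-from-adjunction}.

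The commutativity data amounts to a natural null homotopy $\gamma_\eta$ of $(\id_A, p)^\ast \theta_\eta \colon A \to \Sigma^2 I$ in $\BMod{A}{A^\eta}(\calA)$. Unfolding, $(\id_A, p)^\ast \theta_\eta = (\eta_0, \widetilde p)^\ast \theta^{\Sigma I}$, and the equivalence $\widetilde p \simeq \eta_0 \circ p$ from \cref{obs:square-bimodules-Aeta} rewrites this as $(\id_A, p)^\ast (\eta_0, \eta_0)^\ast \theta^{\Sigma I}$. The latter is canonically null since the cofiber sequence $\Sigma I \to A \ltimes \Sigma I \to A$ splits after pullback along $(\eta_0, \eta_0)$. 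Tensoring with $M \in \LMod_{A^\eta}(\calM)$ produces $\gamma_\eta$, and \cref{const:functor-to-null} promotes $(A \otimes_{A^\eta}(-), \gamma_\eta)$ to the desired functor $\Phi$.

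For the right adjoint, \cref{cor:fiber-sequence-right-adjoint} supplies a fiber sequence $\Phi^R \to p^\ast U \to \Sigma^{-1} p^\ast (\Sigma^2 I \otimes_A U)$. Post-composing with the conservative forgetful $\Und^{A^\eta}$ and invoking the compatibility $\Phi \circ F^{A^\eta} \simeq \Phi(A \otimes (-), \gamma_A)$ --- which follows by unwinding $\gamma_\eta$ on a free module $A^\eta \otimes X$ and matching to the definition of $\gamma_A$ via the two canonical splittings of $\theta^{\Sigma I}$ afforded by \cref{obs:square-bimodules-Aeta} --- lets us apply \cref{cor:weird-adjoint-special-case} to conclude that $\Und^{A^\eta} \circ \Phi^R(X, h) \simeq \fib(\beta_h)$. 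Since $\Und^{A^\eta}$ is conservative, $\ker(\Phi^R)$ consists of exactly the $\beta$-divisible objects.

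The main technical obstacle is showing that the unit $\id \to \Phi^R \Phi$ is an equivalence. Both functors preserve colimits, so this reduces to the claim on free modules: $\Phi^R \Phi(A^\eta \otimes X) \simeq A^\eta \otimes X$ naturally in $X \in \calM$. Unwinding via the previous paragraph, the claim amounts to identifying a concrete fiber in $\calM$ with $A^\eta \otimes X$, which I would carry out by applying \cref{prop:almost-limit-tensoring} to the algebra pullback $A^\eta = A \times_{A \ltimes \Sigma I} A$ to propagate it through the tensor $(-) \otimes X$. Once $\Phi$ is verified to be an internally coreflective adjunction in $\PrLst$, \cref{lem:recollement-from-adjunction} produces a stable recollement in which the image of $\Phi$ coincides with $\{(X,h) : L(X,h) \simeq 0\}$, where $L$ is the left-adjoint reflection onto $\ker(\Phi^R) = \{\beta\text{-divisible}\}$. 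The adjunction identity $\Map_{\calC}((X,h), Z) \simeq \Map_{\calL}(L(X,h), Z)$ for $Z \in \ker(\Phi^R)$ then identifies this with the left orthogonal of the $\beta$-divisible objects --- namely, the $\beta$-torsion subcategory.
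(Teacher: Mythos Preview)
Your construction of $\Phi$ directly from the null homotopy $\gamma_\eta$ is sound, and your identification of $\ker(\Phi^R)$ via \cref{cor:weird-adjoint-special-case} together with the essential-image argument via \cref{lem:recollement-from-adjunction} match the paper's treatment.

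The genuine gap is in proving that the unit $\id \to \Phi^R\Phi$ is an equivalence. Your reduction to free modules is valid, but the step ``identifying a concrete fiber in $\calM$ with $A^\eta \otimes X$, which I would carry out by applying \cref{prop:almost-limit-tensoring}'' does not work as stated: \cref{prop:almost-limit-tensoring} is about tensoring an almost-limit diagram of \emph{presentable $\calA$-module categories} with another presentable $\calA$-module; it says nothing about tensoring a pullback of objects with $X\in\calM$. What you actually need is to identify $\beta_h$, for $h=\gamma_\eta$ evaluated at $A^\eta\otimes X$, with the map $(\eta-\eta_0)\otimes X$ so that its fiber is visibly $A^\eta\otimes X$. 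This requires a careful comparison of the two null homotopies in play --- $\gamma_A(A\otimes X)$ built from the underlying splitting, versus $\Und(h)$ built from your $(\eta_0,\eta_0)$-splitting pulled back along $(\id_A,p)$ --- and you have not carried this out. Note also that an abstract isomorphism of source and target is not enough; you must show the unit map itself is an equivalence.

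The paper sidesteps this object-level computation by working categorically. It first proves the case $\calM=\calA$ separately (\cref{thm:sqz-stable-nonsplit-base}): full faithfulness there is obtained by pasting the almost-cartesian square of \cref{lem:almost-cartesian-module-categories} (arising from the algebra pullback defining $A^\eta$) with the split-case embedding $\LMod_{A\ltimes\Sigma I}(\calA)\hookrightarrow\Fixcolax_{\Sigma^2 I\otimes_A(-)}(\LMod_A(\calA))$ from \cref{cor:bifiber-seq-general-case}, and then matching the resulting square to the defining pullback of $\Null_{\theta_\eta}$ via \cref{lem:identify-obstruction-map}. Only at that point does \cref{prop:almost-limit-tensoring} enter, to tensor this almost-cartesian square of categories with $\calM$ and thereby pass to the general case. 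In short, the Section~\ref{section:2} machinery on colax fixed points is doing exactly the work that your direct free-module computation would have to reproduce by hand.
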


Our proof of \cref{thm:sqz-general-case} will proceed by first proving an $\calA$-linear version of the theorem in the special case where $\mcal{M}=\calA$ and then "spending" that $\calA$-linearity to deduce the general result.
We begin with a preliminary lemma.

\begin{lem}\label{lem:almost-cartesian-module-categories}
    For any cartesian square of $\bbE_1$-algebras in $\calA$:
    \[\begin{tikzcd}
	R & {R'} \\
	S & {S',}
	\arrow["f"', from=1-1, to=2-1]
	\arrow["j", from=1-1, to=1-2]
	\arrow["{f'}", from=1-2, to=2-2]
	\arrow["\lrcorner"{anchor=center, pos=0.125}, draw=none, from=1-1, to=2-2]
	\arrow[from=2-1, to=2-2]
    \end{tikzcd}\]
    the comparison functor $\LMod_{R}(\calA) \hookrightarrow \LMod_{S}(\calA) \times_{\LMod_{S'}(\calA)} \LMod_{R'}(\calA)$ is fully faithful.
\end{lem}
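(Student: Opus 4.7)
The plan is to reduce fully faithfulness of the comparison functor to the assertion that a particular Mayer--Vietoris square is cartesian, and then to verify that by comparing horizontal fibers in $\calA$.

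Fix $M, N \in \LMod_R(\calA)$. Fully faithfulness amounts to showing that the natural map
\[\Map_{\LMod_R}(M,N) \too \Map_{\LMod_S}(S\otimes_R M,S\otimes_R N) \times_{\Map_{\LMod_{S'}}(S'\otimes_R M,S'\otimes_R N)} \Map_{\LMod_{R'}}(R'\otimes_R M,R'\otimes_R N)\]
is an equivalence. By the base-change/restriction-of-scalars adjunctions each term on the right rewrites as $\Map_{\LMod_R}(M,\mathrm{res}_R(-))$, and since $\Map_{\LMod_R}(M,-)$ preserves limits, varying $M$ and applying Yoneda reduces the problem to showing that for every $N$ the square
\[\begin{tikzcd}
N & R'\otimes_R N \\
S\otimes_R N & S'\otimes_R N
\arrow[from=1-1,to=1-2]
\arrow[from=1-1,to=2-1]
\arrow[from=1-2,to=2-2]
\arrow[from=2-1,to=2-2]
\end{tikzcd}\]
is cartesian in $\LMod_R(\calA)$ --- equivalently in $\calA$, since the forgetful functor is conservative and preserves limits.

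To establish this, I would lift the given cartesian square of algebras to one of $(R,R)$-bimodules by invoking the fact used in \cref{obs:square-bimodules-Aeta}: the forgetful functor $\Alg(\calA)_{R/} \to \BMod{R}{R}(\calA)$ creates weakly contractible limits (through $\calA$). In particular the induced map $\fib(R \to R') \to \fib(S \to S')$ on horizontal fibers is an equivalence in $\BMod{R}{R}(\calA)$. Since $(-)\otimes_R N \colon \BMod{R}{R}(\calA) \to \calA$ is exact, the horizontal fibers of the displayed target square identify with $\fib(R \to R')\otimes_R N$ and $\fib(S \to S')\otimes_R N$ respectively, and the comparison between them is the tensor of the bimodule equivalence above with $N$, hence itself an equivalence. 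The standard criterion for cartesianness of a square in a stable \category{} --- equivalence of induced horizontal fibers --- then gives the claim.

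The main obstacle I anticipate is purely coherence-theoretic: namely, identifying the unit $N \to \Phi^R\Phi(N)$ produced by the Yoneda reduction with the Mayer--Vietoris square displayed above in a natural way, and verifying that the induced comparison on horizontal fibers really coincides with the tensor of the bimodule equivalence $\fib(R \to R') \iso \fib(S \to S')$ with $N$. Once these naturalities are in place, the argument collapses to exactness of the tensor product, which is immediate.
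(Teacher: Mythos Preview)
Your argument is correct and lands on the same core point as the paper: the unit of the comparison adjunction at $N$ is the Mayer--Vietoris map $N \to (S\otimes_R N)\times_{S'\otimes_R N}(R'\otimes_R N)$, and this is an equivalence because the square of $(R,R)$-bimodules is cartesian and $(-)\otimes_R N$ is exact.

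The only real difference is in how the reduction and its coherence are packaged. You go through mapping spaces and Yoneda, and then flag as an obstacle the identification of the resulting unit with the displayed square together with the compatibility of the fiber comparison. The paper sidesteps this by invoking the equivalence $\BMod{R}{R}(\calA)\simeq \enEnd^{\mrm L}_\calA(\LMod_R(\calA))$ \cite[Remark~4.8.4.9]{HA}, under which the bimodule square becomes the square of monads $\id,\ f^\ast f_!,\ j^\ast j_!,\ (f'j)^\ast(f'j)_!$; adjoint descent \cite[Theorem~5.5]{AdjDescent} then identifies the unit of the comparison adjunction on the nose with the induced map $\id \to f^\ast f_! \times_{(f'j)^\ast(f'j)_!} j^\ast j_!$. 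This is exactly the coherence you were worried about, resolved in one stroke; once it is in place, both arguments finish the same way.
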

\begin{proof}
    The forgetful functor $\Alg(\calA)_{R/} \to \BMod{R}{R}$ preserves limits and thus sends pullback squares of $\bbE_1$-algebras to pushout squares of bimodules.
    The square of bimodules corresponding to the above square can be identified under the equivalence 
    $ \BMod{R}{R}(\calA) \simeq \enEnd_\calA^\mrm{L}\left(\LMod_{R}(\mcal{A})\right)$ \cite[Remark 4.8.4.9]{HA} with the commutative square of natural transformations
    \[\begin{tikzcd}
	\id & {j^\ast j_!} \\
	{f^\ast f_!} & {(f'\circ j)^\ast (f'\circ j)_!.}
	\arrow[from=1-1, to=2-1]
	\arrow[from=1-1, to=1-2]
	\arrow[from=1-2, to=2-2]
	\arrow[from=2-1, to=2-2]
    \end{tikzcd}\]
    By adjoint descent, the comparison map $\id \to  f^\ast f_! \times_{(f'\circ j)^\ast (f'\circ j)_!} j^\ast j_!$ is precisely the unit of the adjunction $\LMod_R(\calA) \adj \LMod_{S}(\calA) \times_{\LMod_{S'}(\calA)} \LMod_{R'}(\calA)$ so we are done.
\end{proof}

\begin{thm}\label{thm:sqz-stable-nonsplit-base}
    Let $(A,I,\eta)$ be a square zero datum in $\calA$.
    There exists a canonically commuting square in $\RMod_\calA(\PrLst)$:
    \[\begin{tikzcd}
		{\LMod_{A^{\eta}}(\mcal{A})} &&& {\LMod_{A}(\mcal{A})^{\partial \Delta^1}} \\
		{\LMod_{A}(\mcal{A})} &&& {\LMod_{A}(\mcal{A})^{\Delta^1}}
		\arrow["{A \otimes_{A^{\eta}}(-)}"', from=1-1, to=2-1]
		\arrow["{\theta_{\eta} \otimes_A(-)}", from=2-1, to=2-4]
		\arrow["{(X,Y) \mapsto (X \xrightarrow{0}Y)}", from=1-4, to=2-4]
		\arrow["{(A \otimes_{A^{\eta}}(-), \Sigma^2 I \otimes_{A^{\eta}}(-) )}", from=1-1, to=1-4]
	\end{tikzcd}\]
	which gives rise to a internally coreflective adjunction in $\enRMod_\calA(\PrLst)$:
	\[\begin{tikzcd}
	{\Phi \colon \LMod_{A^\eta}(\mcal{A})} && { \Null_{\theta_\eta}\left(\LMod_{A}(\mcal{A})\right) \colon \Phi^R}
	\arrow[""{name=0, anchor=center, inner sep=0}, shift left=2, hook, from=1-1, to=1-3]
	\arrow[""{name=1, anchor=center, inner sep=0}, shift left=2, from=1-3, to=1-1]
	\arrow["\dashv"{anchor=center, rotate=-90}, draw=none, from=0, to=1]
    \end{tikzcd}\]
\end{thm}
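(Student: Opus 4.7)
The plan is to construct the commuting square directly from the pullback defining $A^\eta$, and then verify that the induced functor is internally coreflective by combining \cref{lem:almost-cartesian-module-categories} with the split-case result \cref{cor:bifiber-seq-general-case}. Throughout, one must track $\calA$-linearity carefully since this intermediate theorem is stated in $\enRMod_\calA(\PrLst)$, and its $\calA$-linearity is what the subsequent \cref{thm:sqz-general-case} will ``spend'' to deduce the general $\calM$-case.

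The null homotopy witnessing commutativity of the square arises from the pullback \hyperref[diag:defn-of-A-eta]{$(\star)$} defining $A^\eta$. Via \cref{obs:square-bimodules-Aeta} the pullback provides a canonical identification $\eta \circ p \simeq \eta_0 \circ p = \widetilde{p}$ in $\Alg(\calA)_{A^\eta/}$, whence $(p,p)^\ast(\theta_\eta) = (p,p)^\ast(\eta_0,\eta)^\ast(\theta^{\Sigma I}) \simeq (\widetilde{p},\widetilde{p})^\ast(\theta^{\Sigma I})$ in $\BMod{A^\eta}{A^\eta}(\calA)$, and this restricted bimodule map acquires a canonical null homotopy from the pullback structure (concretely, the bimodule pullback of \cref{obs:square-bimodules-Aeta} produces a canonical factorization of both diagonals through the corners). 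Tensoring with an arbitrary $M \in \LMod_{A^\eta}(\calA)$ and using the canonical equivalence $\Sigma^2 I \otimes_{A^\eta} M \simeq \Sigma^2 I \otimes_A (A \otimes_{A^\eta} M)$ yields the required null homotopy of $\theta_\eta \otimes_A (A \otimes_{A^\eta} M)$, giving the commuting square and hence the induced functor $\Phi \colon \LMod_{A^\eta}(\calA) \to \Null_{\theta_\eta}(\LMod_A(\calA))$.

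For internal coreflectivity I would apply \cref{cor:bifiber-seq-general-case} to $\mcal{N} = \LMod_A(\calA)$ and $T = \Sigma^2 I \otimes_A(-)$. Under the equivalence $\BMod{A}{A}(\calA) \simeq \enEnd^\mrm{L}_\calA(\LMod_A(\calA))$ the augmented endomorphism algebra $\Lambda(\Sigma I \otimes_A(-))$ corresponds to the bimodule algebra $A \ltimes \Sigma I$, producing an internally coreflective adjunction $\tilde{i}_! \colon \LMod_{A \ltimes \Sigma I}(\calA) \hookrightarrow \Fixcolax_{\Sigma^2 I \otimes_A(-)}(\LMod_A(\calA))$ in $\enRMod_\calA(\PrLst)$. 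Meanwhile, \cref{lem:almost-cartesian-module-categories} applied to the defining pullback expresses $\LMod_{A^\eta}(\calA)$ as an almost cartesian subcategory of $\LMod_A(\calA) \times_{\LMod_{A \ltimes \Sigma I}(\calA)} \LMod_A(\calA)$ with respect to $\eta_{0,!}$ and $\eta_!$. Composing with the fully faithful $\tilde{i}_!$ and invoking \cref{lem:identify-obstruction-map} (which identifies $\pi_! \circ \tilde{i}_!$ with the edge map of the canonical cofiber sequence) identifies this pullback with the appropriate full subcategory of $\Null_{\theta_\eta}(\LMod_A(\calA))$. The $\calA$-linearity of the right adjoint $\Phi^R$ follows from the fact that all the constructions take place in $\enRMod_\calA(\PrLst)$, with \cref{prop:almost-limit-tensoring} ensuring the almost-limit property is preserved under the $\calA$-action.

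The main obstacle is the identification in the second half of the third paragraph: recognizing that the two composite functors $\tilde{i}_! \eta_{0,!}, \tilde{i}_! \eta_! \colon \LMod_A(\calA) \to \Fixcolax_{\Sigma^2 I \otimes_A(-)}(\LMod_A(\calA))$ produce via \cref{lem:identify-obstruction-map} two arrows in $\Ar(\LMod_A(\calA))$ whose ``difference'', mediated by the pullback equivalence $\eta \circ p \simeq \eta_0 \circ p$, is coherently equivalent to the null homotopy data parameterized by $\Null_{\theta_\eta}(\LMod_A(\calA))$. This requires a careful unwinding of the definition $\theta_\eta = (\eta_0, \eta)^\ast(\theta^{\Sigma I})$ and coherent compatibility between the $\Fixcolax$-data, the edge-map structure of \cref{lem:identify-obstruction-map}, and the pullback square of bimodules in \cref{obs:square-bimodules-Aeta}.
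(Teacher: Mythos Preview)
Your overall strategy matches the paper's: assemble the square by pasting the almost cartesian square from \cref{lem:almost-cartesian-module-categories} (applied to the defining pullback of $A^\eta$) with the coreflective embedding $\tild{i}_! \colon \LMod_{A \ltimes \Sigma I}(\calA) \hookrightarrow \Fixcolax_{\Sigma^2 I \otimes_A(-)}(\LMod_A(\calA))$ from \cref{cor:bifiber-seq-general-case}, and identify the bottom map via \cref{lem:identify-obstruction-map}. The paper carries out the identification you flag as the ``main obstacle'' by a second pasting step: it passes through the explicit pullback description of $\Fixcolax$ from \cref{lem: formula-for-colax-fixpoints}, so that the square involving $\Fixcolax$ is sandwiched between two cartesian squares whose outer rectangle is visibly the one in the statement. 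This is more concrete than your bimodule-level argument in the second paragraph, which as written is too vague to verify (the assertion that $(\widetilde p,\widetilde p)^\ast(\theta^{\Sigma I})$ ``acquires a canonical null homotopy from the pullback structure'' needs an actual construction).

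There is a genuine gap in your treatment of internal coreflectivity. You write that the $\calA$-linearity of $\Phi^R$ ``follows from the fact that all the constructions take place in $\enRMod_\calA(\PrLst)$,'' but this is exactly what does \emph{not} follow automatically: the right adjoint of an $\calA$-linear left adjoint is only $\calA$-lax-linear in general (see \cref{obs:passing-to-adjoint}), and need not preserve colimits. The paper addresses this separately: it invokes \cref{cor:fiber-sequence-right-adjoint} to exhibit $\Phi^R$ as sitting in a fiber sequence $\Phi^R \to p^\ast U \to \Sigma^{-1} p^\ast(\Sigma^2 I \otimes_A U)$, where both $p^\ast$ and $U$ are $\calA$-linear left adjoints, and then concludes via \cref{obs:finite-limits-of-linear-left-adjoints} that $\Phi^R$ is itself an $\calA$-linear left adjoint. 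Your appeal to \cref{prop:almost-limit-tensoring} is misplaced here---that lemma concerns stability of almost-limit diagrams under $(-)\otimes_\calA \calM$ and is used only later, in the proof of \cref{thm:sqz-general-case}, to pass from $\calA$ to a general $\calM$.
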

\begin{proof}
    Note that almost cartesian squares are closed under pasting.
	Consider the following diagram of left adjoints
	\[\begin{tikzcd}
		{\LMod_{A^{\eta}}(\mcal{A})} & {\LMod_{A}(\mcal{A})} & {\LMod_{A}(\mcal{A})} \\
		{\LMod_{A}(\mcal{A})} & {\LMod_{A \ltimes \Sigma I}(\mcal{A})} & {\Fixcolax_{\Sigma^2 I \otimes_A(-)}(\LMod_A(\mcal{A}))}
		\arrow[""{name=0, anchor=center, inner sep=0}, "{\tild{i}_!}", hook, from=2-2, to=2-3]
		\arrow["{i_!}", from=1-3, to=2-3]
		\arrow["{=}", from=1-2, to=1-3]
		\arrow["{\eta_{0,!}}"', from=1-2, to=2-2]
		\arrow["{\eta_{!}}", from=2-1, to=2-2]
		\arrow["p_!"', from=1-1, to=2-1]
		\arrow["p_!", from=1-1, to=1-2]
	\end{tikzcd}\]
	The bottom right horizontal map is fully faithful by \cref{cor:bifiber-seq-general-case} and thus the right square is almost cartesian.
	The left square is almost cartesian by \cref{lem:almost-cartesian-module-categories}, hence by pasting so is the outer rectangle.
    Consider now the following commutative diagram:
	\[\begin{tikzcd}
		{\LMod_{A^{\eta}}(\mcal{A})} && {\LMod_{A}(\mcal{A})} && {\LMod_{A}(\mcal{A})^{\partial\Delta^1}} \\
		{\LMod_{A}(\mcal{A})} && {\Fixcolax_{\Sigma^2 I \otimes_A(-)}(\LMod_A(\mcal{A}))} && {\LMod_{A}(\mcal{A})^{\Delta^1}} \\
		&& {\LMod_{A}(\mcal{A})} && {\LMod_{A}(\mcal{A})^{\partial\Delta^1}}
		\arrow["{i_!}"', from=1-3, to=2-3]
		\arrow["{p_!}"', from=1-1, to=2-1]
		\arrow["{p_!}", from=1-1, to=1-3]
		\arrow["{\widetilde{i}_!\eta_!}", from=2-1, to=2-3]
		\arrow["{(\Id,\Sigma^2 I \otimes_A (-))}", from=1-3, to=1-5]
		\arrow["{\pi_!}", from=2-3, to=2-5]
		\arrow["{(-)\overset{0}{\to}(-)}", from=1-5, to=2-5]
		\arrow["{(\ev_0,\ev_1)}", from=2-5, to=3-5]
		\arrow["{q_!}"', from=2-3, to=3-3]
		\arrow["{(\Id,\Sigma^2 I \otimes_A (-))}", from=3-3, to=3-5]
	\end{tikzcd}\]
	The bottom right square is cartesian by \cref{lem: formula-for-colax-fixpoints}
	and the outer right rectangle is cartesian since the vertical composites are identities.
	Consequently, the top right square is cartesian and since we just showed that the top left square is almost cartesian it follows by pasting that the top outer rectangle is almost cartesian. 
	Finally, by \cref{lem:identify-obstruction-map} the composite functor $\pi_! \tild{i}_! \theta_!$ is represented by $\theta_\eta \colon A \to \Sigma^2 I $, hence the top outer rectangle is precisely the square we promised.
	It remains to prove that the right adjoint
	\[
	\Phi^R \colon \Null_{\theta_{\eta}}(\LMod_A(\mcal{A})) \too \LMod_{A^{\eta}}(\mcal{A})
	\]
	is in fact an internal right adjoint in $\enRMod_\calA(\PrLst)$.
	By \cref{cor:fiber-sequence-right-adjoint} we have a fiber sequence:
    \[\Phi^R(-) \too p^\ast U(-) \too \Sigma^{-1} p^\ast(\Sigma^2 I \otimes_A U(-)) \]
    Since 
    $p^\ast \colon \LMod_A(\calA) \to \LMod_{A^\eta}(\calA)$ 
    and 
    $U \colon \Null_{\theta_\eta}(\LMod_A(\calA)) \to \LMod_A(\calA)$ 
    are both $\calA$-linear left adjoints the claim follows from \cref{obs:finite-limits-of-linear-left-adjoints}.
\end{proof}

\begin{proof}[Proof of \cref{thm:sqz-general-case}]
    Hitting the the square from \cref{thm:sqz-stable-nonsplit-base} with $(-) \otimes_\calA \mcal{M}$ 
    produces by \cref{prop:almost-limit-tensoring} 
    an almost cartesian square of the desired form. 
    It remains to identify the essential image, or equivalently, the kernel of $\Phi^R$.
	The forgetful functor $\LMod_{A^\eta}(\mcal{M}) \to \mcal{M}$ is conservative so we may replace $\Phi^R$ with the composite 
	\[\Null_{\theta_\eta}(\LMod_A(\mcal{M})) \xrightarrow{\Phi^R} \LMod_{A^\eta}(\mcal{M}) \xrightarrow{\forget} \mcal{M}\]
	Tracing carefully through the construction of $\Phi$ we see that the above composite is precisely the right adjoint of the functor
	$\Phi(A\otimes(-),\gamma_A)$ 
	constructed in \cref{ex:weird-adjoint-sqz},
	which we computed in \cref{cor:weird-adjoint-special-case} to be:
	\[\Phi(A \otimes (-),\gamma_A)^R \colon (X,h) \longmapsto \fib\left(\beta_h \colon \Und(X) \to \Und(\Sigma I \otimes_A X)\right)\]
	We conclude that 
	$(X,h) \in \ker(\Phi^R)$ 
	if and only if $X$ is $\beta$-divisible.
	The characterization of the image now follows from \cref{lem:recollement-from-adjunction}.
\end{proof}

\subsection{Obstruction theory}

In this short section we add $t$-structures into the mix and prove a simpler variant of \cref{thm:sqz-general-case} which avoids any mention of the technical condition characterizing the image. 
We then deduce from it an obstruction theory for lifting modules along a square zero extension.

Let us henceforth fix a prestable presentably monoidal \category{} $\calA^{\ge 0}$ such that $\Sp(\calA^{\ge 0}) \simeq \calA$.
Let us write
$\LMod_{\calA^{\ge 0}}(\PrLpst) \subseteq \LMod_{\calA^{\ge 0}}(\PrL)$
for the full subcategory spanned by prestable presentable left $\calA^{\ge 0}$-modules.
Since stabilization
$\Sp(-) \colon \PrL \to \PrLst$ is monoidal it induces a functor:
\[\Sp(-)\colon \LMod_{\calA^{\ge 0}}(\PrLpst) \too \LMod_\calA(\PrLst), \qquad \calM^{\ge 0} \longmapsto \calM\]

\begin{obs}\label{obs:conn-implies-torsion}
    Let $(A,I,\eta)$ be a square zero datum in $\calA^{\ge 0}$, and let $\mcal{M}^{\ge 0} \in \LMod_{\calA^{\ge 0}}(\PrLpst)$. 
    Let $(X,h) \in \Null_{\theta_\eta}(\LMod_A(\mcal{M}))$ be $\beta$-divisible.
    We claim that if $X$ is bounded below, i.e. $n$-connected for some $n \in \bbZ$, then $X$ is in fact $\infty$-connected.
    Indeed, by assumption $\beta_h$ provides an equivalence $\Und(X) \simeq \Und(\Sigma I \otimes_A X)$ and thus
    \[ \conn(X) = \conn(\Sigma I \otimes_A X) \ge \conn(X)+\conn(\Sigma I)= \conn(X)+\conn(I)+1 \ge \conn(X)+1.\]
    In particular, if $\calM^{\ge 0}$ is separated and $X \in \calM^{\ge 0}$ then $X \simeq 0$.
\end{obs}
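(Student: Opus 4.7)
The strategy is to derive the chain of inequalities sketched in the observation itself. Since $(X,h)$ is $\beta$-divisible, by definition $\beta_h \colon \Und(X) \to \Und(\Sigma I \otimes_A X)$ is an equivalence in $\calM$, so in particular $\conn(X) = \conn(\Sigma I \otimes_A X)$, where connectivity in $\LMod_A(\calM)$ is detected by the conservative forgetful functor to $\calM = \Sp(\calM^{\ge 0})$. Combining this equality with a lower bound of the form $\conn(\Sigma I \otimes_A X) \ge \conn(\Sigma I) + \conn(X) = \conn(I) + 1 + \conn(X)$, and then using $\conn(I) \ge 0$, produces $\conn(X) \ge \conn(X) + 1$. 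Given a bounded below $X$, iterating this inequality forces $\conn(X) = \infty$. The final clause is then immediate, because separation of $\calM^{\ge 0}$ means precisely that $\infty$-connective objects of $\calM$ vanish.

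The substantive step is the connectivity lower bound on the relative tensor product. My plan is to deduce it from the compatibilities built into the hypotheses on $\calA^{\ge 0}$ and $\calM^{\ge 0}$. Presentably monoidal prestability of $\calA^{\ge 0}$ means that the tensor functor $\calA^{\ge 0} \times \calA^{\ge 0} \to \calA^{\ge 0}$ is bi-colimit preserving, whence $\conn(V \otimes W) \ge \conn(V) + \conn(W)$ for $V, W \in \calA$. Likewise, $\calM^{\ge 0}$ being a prestable presentable $\calA^{\ge 0}$-module means that the action restricts to $\calA^{\ge 0} \otimes \calM^{\ge 0} \to \calM^{\ge 0}$ with the analogous bi-additivity on connectivity. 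I would then express $\Sigma I \otimes_A X$ as the geometric realization of the two-sided bar construction $\mathrm{Bar}_\bullet(\Sigma I, A, X)$, whose $k$-th term is $\Sigma I \otimes A^{\otimes k} \otimes X$. Since $A$ is connective, each such term is at least $(\conn(\Sigma I) + \conn(X))$-connective, and because the truncation functor $\tau_{\ge n}$ on a prestable presentable category preserves colimits, the same bound survives to the realization.

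The main obstacle, modest as it is, lies in the non-symmetric $\bbE_1$-setting: one must be a little careful in writing down the bar resolution of $\Sigma I \otimes_A X$ as a module in $\calM$, since $I$ is only an $(A,A)$-bimodule and $X$ only a left $A$-module. But once one identifies $\Sigma I \otimes_A X$ with the geometric realization of the diagram $[k] \mapsto \Sigma I \otimes A^{\otimes k} \otimes X$ in $\calM$, the rest of the argument reduces to the two standing compatibilities recorded above and the tautological colimit-preservation property of truncation in the prestable setting.
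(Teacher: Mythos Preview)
Your argument is correct and matches the paper's approach exactly: the observation's proof in the paper is nothing more than the displayed chain of inequalities, and you have supplied the standard bar-resolution justification for the bound $\conn(\Sigma I \otimes_A X) \ge \conn(\Sigma I) + \conn(X)$ that the paper leaves implicit. One small slip worth flagging: $\tau_{\ge n}$ is a right adjoint and does not preserve colimits; what you actually need (and what holds) is that the full subcategory $\calM^{\ge n} \subseteq \calM$ is closed under colimits, which follows since the inclusion is itself a left adjoint.
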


\begin{lem}\label{lem:Hurewitz-sqz}
    Let $(A,I,\eta)$ be a square zero datum in $\calA^{\ge 0}$, and let $\mcal{M}^{\ge 0} \in \LMod_{\calA^{\ge 0}}(\PrLpst)$.
    Then the functor $A \otimes_{A^\eta} (-) \colon \LMod_{A^\eta}(\mcal{M}) \to \LMod_{A}(\mcal{M})$ 
    detects connectivity, i.e.
    if $X \in \LMod_{A^\eta}(\mcal{M})$ is such that $A\otimes_{A^\eta}X$ is connective then $X$ is connective.
\end{lem}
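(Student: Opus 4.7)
The plan is to derive from the defining pullback of $A^\eta$ a short fiber sequence of $(A^\eta, A^\eta)$-bimodules in which the square zero structure is manifest, and then tensor it on the right with $X$ to relate $X$ directly to $A \otimes_{A^\eta} X$.

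First, following \cref{obs:square-bimodules-Aeta}, I would view the defining pullback as a pullback in $\BMod{A^\eta}{A^\eta}(\calA)$. The canonical $(A,A)$-bimodule splitting $A \ltimes \Sigma I \simeq A \oplus \Sigma I$ identifies $\eta_0$ with the summand inclusion, hence its fiber in $\BMod{A}{A}(\calA)$ is $I$. Restricting along $(\tilde p, \tilde p)$ and using that the $A \ltimes \Sigma I$-action on the augmentation ideal $\Sigma I$ factors through the projection to $A$, the fiber of $p \colon A^\eta \to A$ in $\BMod{A^\eta}{A^\eta}(\calA)$ is canonically identified with $(p,p)^\ast I$. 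In particular, both the left and right $A^\eta$-actions on this fiber factor through $p$; this is the concrete incarnation of the square zero condition on $p$.

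Second, tensoring the resulting fiber sequence $(p,p)^\ast I \to A^\eta \to (p,p)^\ast A$ on the right with $X$ yields a fiber sequence in $\calM$
\[ I \otimes_{A^\eta} X \too X \too A \otimes_{A^\eta} X. \]
Since the right $A^\eta$-action on $(p,p)^\ast I$ factors through $A$, the projection formula along $p$ rewrites the left term as $I \otimes_A (A \otimes_{A^\eta} X)$. Now assume $A \otimes_{A^\eta} X$ is connective. Since $I \in \calA^{\ge 0}$ is connective and the relative tensor product of connective bimodules with connective modules is connective, $I \otimes_A (A \otimes_{A^\eta} X)$ is also connective. The displayed fiber sequence then exhibits $X$ between two connective objects in $\calM$, and since $\calM^{\ge 0} \subseteq \calM$ is closed under extensions (equivalently, the long exact sequence of homotopy objects shows that $\pi_n X$ vanishes for $n < 0$), $X$ is connective.

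The step most likely to require care is the first one: cleanly establishing that the fiber of $p$ as an $(A^\eta, A^\eta)$-bimodule is $(p,p)^\ast I$, i.e., that the square zero condition forces both $A^\eta$-actions on this fiber to factor through $p$. Once this is in place, everything else is a formal consequence of exactness of the relative tensor product and standard $t$-structure arguments.
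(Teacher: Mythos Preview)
Your argument is correct and genuinely simpler than the paper's. The paper sets up an entire Adams-type tower $X^k \coloneqq I^{\otimes_{A^\eta} k} \otimes_{A^\eta} X$, uses \cite[Proposition 7.4.1.14]{HA} to show the transition maps $X^{k+1} \to X^k$ are null for $k \ge 1$ so that $\varprojlim_k (X^k)_{<0} \simeq 0$, computes the associated graded as $A \otimes_{A^\eta} I^{\otimes_{A^\eta} k} \otimes_A (A \otimes_{A^\eta} X)$ (this is where it invokes \cref{obs:square-bimodules-Aeta}, i.e.\ exactly the projection-formula step you isolate), and then inducts on $k$ to conclude. You instead use only the single fiber sequence $(p,p)^\ast I \to A^\eta \to (p,p)^\ast A$ and the projection formula $I \otimes_{A^\eta} X \simeq I \otimes_A (A \otimes_{A^\eta} X)$, immediately exhibiting $X$ as an extension of two connective objects. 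The key ingredient---that the $A^\eta$-action on $\fib(p)$ factors through $A$---is the same in both proofs; your observation is that once this is in hand, the one-step extension argument suffices and the filtration is superfluous. Your caveat about that step is well-placed: it is precisely the content of \cref{obs:square-bimodules-Aeta}, obtained by identifying $\fib(p)$ with $(p,p)^\ast\fib\bigl(\eta_0 \colon A \to (\eta_0,\eta_0)^\ast(A \ltimes \Sigma I)\bigr) \simeq (p,p)^\ast I$ via the $(A,A)$-bimodule splitting $A \ltimes \Sigma I \simeq A \oplus \Sigma I$.
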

\vspace{-1em}
\begin{proof}
    We begin by considering an "Adams type" filtration
    $X^\star \colon \bbZ^{\downarrow}_{\ge 0} \to \mcal{M}$,
    on $X$ by setting $X^k = X$ for $k = 0$ and
    $X^k\coloneqq I^{\otimes_{A^\eta} k} \otimes_{A^\eta} X$ for $k > 0$,
    with the maps $X^{k+1} \to X^k$ given by multiplication.
    By \cite[Proposition 7.4.1.14]{HA} the multiplication map 
    $I^{\otimes 2} \to I$ is null, hence the maps
    $X^{k+1} \to X^k$ are null for all $k \ge 1$.
    In particular we have 
    $\varprojlim_k X^k_{<0} \simeq 0$.
    Since truncation 
    $(-)_{<0} \colon \mcal{M} \to \mcal{M}_{<0}$
    preserves colimits, to prove $X$ is connective it suffices to show that $\cofib(X^k \to X)$ is connective for all $k \ge 0$.
    The associated graded is readily computed
    \begin{align*}
        \gr_k X^\star 
        & = \cofib(I^{\otimes_{A^\eta} k+1} \otimes_{A^\eta}X \to I^{\otimes_{A^\eta} k} \otimes_{A^\eta}X ) \\
        & \simeq A \otimes_{A^\eta} I^{\otimes_{A^\eta} k} \otimes_{A^\eta} X\\
        & \simeq A \otimes_{A^\eta} I^{\otimes_{A^\eta}k}\otimes_A (A \otimes_{A^\eta} X)
    \end{align*}
    where in the last step we used \cref{obs:square-bimodules-Aeta}.
    We see that $\gr_k X^\star$ is connective for all $k \ge 0$, and thus by induction on $k$ the same holds for $\cofib(X^k \to X^0=X)$.
\end{proof}

\begin{thm}\label{thm:connective-main-theorem}
    Let $(A,I,\eta)$ be a square zero datum in $\calA^{\ge 0}$, and let $\mcal{M}^{\ge 0} \in \LMod_{\calA^{\ge 0}}(\PrLpst)$.
    The fully faithful embedding of \cref{thm:sqz-general-case}  restricts to an equivalence:
    \[  \LMod_{A^\eta}(\mcal{M}^{\ge 0}) \simeq \Null_{\theta_\eta}\left(\LMod_A(\mcal{M}^{\ge 0})\right)\]
\end{thm}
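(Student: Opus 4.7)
The plan is to exploit the adjunction $\Phi \dashv \Phi^R$ from Theorem~\ref{thm:sqz-general-case}, whose left adjoint $\Phi$ is already fully faithful. To upgrade this to an equivalence between the connective subcategories, I need to verify two things: that $\Phi$ and $\Phi^R$ both preserve the property of having connective underlying object in $\calM$, and that on the connective subcategory, the counit $\Phi\Phi^R \to \Id$ is an equivalence. Throughout, I assume (as in \cref{thm:sqz-connective-variant}) that $\calM^{\ge 0}$ is separated, which is what controls the essential image.

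That $\Phi$ sends $\LMod_{A^\eta}(\calM^{\ge 0})$ into $\Null_{\theta_\eta}(\LMod_A(\calM^{\ge 0}))$ follows from the commuting square of \cref{thm:sqz-general-case}: the composite with the forgetful $U$ is $A \otimes_{A^\eta}(-)$, which preserves connectivity since $A$ is a connective $(A, A^\eta)$-bimodule. To see that $\Phi^R$ also preserves connectivity, I would invoke \cref{cor:fiber-sequence-right-adjoint} for $f_! = A \otimes_{A^\eta}(-)$, $f^* = p^*$, $E = \Sigma^2 I \otimes_A (-)$, yielding a fiber sequence
\[\Phi^R(Y,h) \too p^* Y \too p^*(\Sigma I \otimes_A Y) \quad \in \LMod_{A^\eta}(\calM).\]
Since $p^*$ is underlying-preserving, $I$ and $Y$ are connective, and the $\calA^{\ge 0}$-action on $\calM^{\ge 0}$ sends connectives to connectives (so $I \otimes_A Y \in \calM^{\ge 0}$), the right-hand term is $1$-connective while $p^* Y$ is connective. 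A one-line long exact sequence then gives that $\Phi^R(Y,h)$ is connective. Note that this is exactly where the shift $\Sigma^{-1}$ of \cref{cor:fiber-sequence-right-adjoint} combines with $\Sigma^2 I$ to leave a single suspension's worth of connectivity gap, which is what makes the argument succeed.

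With both functors preserving connectivity, I would conclude by a cofiber argument. Given $(Y,h) \in \Null_{\theta_\eta}(\LMod_A(\calM^{\ge 0}))$, complete the counit to a cofiber sequence $\Phi\Phi^R(Y,h) \to (Y,h) \to Z$ in $\Null_{\theta_\eta}(\LMod_A(\calM))$. The first two objects have connective underlying, so $\Und(Z) \in \calM^{>-\infty}$ is bounded below. On the other hand, the recollement of \cref{lem:recollement-from-adjunction} applied to the adjunction of \cref{thm:sqz-general-case} identifies $\ker(\Phi^R)$ with the $\beta$-divisible objects, so $Z$ is $\beta$-divisible. Now \cref{obs:conn-implies-torsion} forces $\Und(Z)$ to be $\infty$-connected, and separation of $\calM^{\ge 0}$ then gives $\Und(Z) \simeq 0$, hence $Z \simeq 0$ by conservativity of $\Und$. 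Thus the counit is an equivalence on connective objects and $\Phi$ restricts to the claimed equivalence.

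The main obstacle, insofar as there is one, is bookkeeping: making sure the fiber sequence of \cref{cor:fiber-sequence-right-adjoint} is applied with the correct identification of $f_!$, $f^*$, and $E$, so that the shift $\Sigma^{-1}$ interacts with $\Sigma^2 I$ in a way that yields the crucial $1$-connectivity gap. Everything else reduces to a standard Postnikov/separated-prestable argument.
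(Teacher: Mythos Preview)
Your proof is correct and follows essentially the same strategy as the paper's: both establish that $\Phi^R$ preserves connectivity via the fiber sequence of \cref{cor:fiber-sequence-right-adjoint}, and both finish by identifying $\ker(\Phi^R)$ with the $\beta$-divisible objects and invoking \cref{obs:conn-implies-torsion} under the separation hypothesis (you phrase this as vanishing of the counit cofiber, the paper equivalently as conservativity of $\Phi^R$ on connectives). One pleasant difference: your argument bypasses \cref{lem:Hurewitz-sqz} entirely. The paper uses that lemma to show the square
\[\begin{tikzcd}
	{\LMod_{A^\eta}(\calM^{\ge 0})} & {\Null_{\theta_\eta}(\LMod_A(\calM^{\ge 0}))} \\
	{\LMod_{A^\eta}(\calM)} & {\Null_{\theta_\eta}(\LMod_A(\calM))}
	\arrow[from=1-1, to=1-2]
	\arrow[hook, from=1-1, to=2-1]
	\arrow[hook, from=1-2, to=2-2]
	\arrow["\Phi", hook, from=2-1, to=2-2]
\end{tikzcd}\]
is a pullback, but this is not actually needed for the conclusion: full faithfulness of the top arrow is immediate from full faithfulness of $\Phi$, and your counit argument handles essential surjectivity directly. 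So your route is marginally more economical, at the cost of not recording the stronger statement that $\LMod_{A^\eta}(\calM^{\ge 0})$ is exactly the preimage of the connective null-category under $\Phi$.
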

\begin{proof}
    \cref{thm:sqz-general-case} applied to the $\calA$-module $\calM$ produces the following diagram
    \[\begin{tikzcd}
	{\LMod_{A^\eta}(\mcal{M}^{\ge 0})} & {\Null_{\theta_\eta}\left(\LMod_{A}(\calM^{\ge 0})\right)} & {\mcal{M}^{\ge 0}} \\
	{\LMod_{A^\eta}(\calM)} & {\Null_{\theta_\eta}\left(\LMod_{A}(\calM)\right)} & {\mcal{M}}
	\arrow[from=1-3, to=2-3]
	\arrow[from=1-2, to=1-3]
	\arrow[hook, from=1-2, to=2-2]
	\arrow[from=2-2, to=2-3]
	\arrow["{\Phi}", hook, from=2-1, to=2-2]
	\arrow[hook, from=1-1, to=1-2]
	\arrow[hook, from=1-1, to=2-1]
	\arrow["{A\otimes_{A^\eta} (-)}"', curve={height=18pt}, from=2-1, to=2-3]
    \end{tikzcd}\]
    where the left square consists entirely of fully faithful functors.
    The right square is evidently a pullback and the outer square is a pullback by  \cref{lem:Hurewitz-sqz}, hence the left square is a pullback.
    It remains to show that the top left functor is essentially surjective.
    Observe that for any $(X,h) \in \Null_{\theta_\eta}(\LMod_A(\calM))$ we have \[\conn(\Phi^R(X,h)) \ge \min \left\{\conn(I) + \conn(X),\conn(X)\right\}  \ge \conn(X)\]
    and thus the adjunction $\Phi \dashv \Phi^R$ restricts to connective objects:
    \[\Phi|_{\LMod_{A^\eta}(\calM^{\ge 0})} \colon  \LMod_{A^\eta}(\calM^{\ge 0})\adj \Null_{\theta_\eta}(\LMod_A(\calM^{\ge 0})) \colon \Phi^R|_{\Null_{\theta_\eta}(\LMod_A(\calM^{\ge 0}))} \]    To conclude we must show that the right adjoint
    is conservative.
    Note that $\Phi^R|_{\Null_{\theta_\eta}(\LMod_A(\calM^{\ge 0}))}$ is a colimit preserving functor between prestable \categories{} so it suffices to check it detects the zero object. 
    Namely, we must show that whenever $(X,h) \in \Null_{\theta_\eta}(\LMod_A(\calM^{\ge 0}))$ satisfies $\Phi^R(X,h) \simeq 0$ then $X \simeq 0$.
    Since $\Phi^R(X,h) \simeq 0$ if and only if $(X,h)$ is $\beta$-divisible this follows directly from \cref{obs:conn-implies-torsion}.
\end{proof}

\printbibliography[heading=bibintoc]

\end{document}